\numberwithin{equation}{section}
\theoremstyle{plain}
\newtheorem{thm}{Theorem}[section]
\newtheorem{prop}[thm]{Proposition}
\newtheorem{cor}[thm]{Corollary}
\newtheorem{lem}[thm]{Lemma}
\newtheorem*{thms}{Theorem}
\newtheorem*{thmI}{Theorem I}
\newtheorem*{thmII}{Theorem II}
\theoremstyle{definition}
\newtheorem{defi}[thm]{Definition}
\newtheorem{example}[thm]{Example}
\newtheorem{rem}[thm]{Remark}
\newtheorem*{rem*}{Remark}
\begin{document}

\title[Orbits of path group actions induced by Hermann actions]{Curvatures and austere property of orbits of path group actions  induced by Hermann actions}
\author[M. Morimoto]{Masahiro Morimoto}

\address[M. Morimoto]{Osaka City University Advanced Mathematical Institute. 3-3-138 Sugimoto, Sumiyoshi-ku, Osaka, 558-8585, Japan}

\makeatletter
\@namedef{subjclassname@2020}{%
  \textup{2020} Mathematics Subject Classification}
\makeatother

\subjclass[2020]{53C40, 53C42}

\keywords{
	Hermann action, hyperpolar action, proper Fredholm action,
	principal curvature, austere submanifold, proper Fredholm submanifold}

\thanks{The author was partly supported by the Grant-in-Aid for Research Activity Start-up (No.\ 20K22309) and by 
Osaka City University Advanced Mathematical Institute (MEXT Joint Usage/Research Center on Mathematics and Theoretical Physics JPMXP0619217849).}

\maketitle

\begin{abstract}
It is known that an isometric action of a Lie group on a compact symmetric space gives rise to a proper Fredholm action of a path group on a path space via the gauge transformations. In this paper, supposing that the isometric action is a Hermann action (i.e. an isometric action of a symmetric subgroup of the isometry group) we give an explicit formula for the principal curvatures of orbits of the path group action and study the condition for those orbits to be austere, that is, the set of principal curvatures in the direction of each normal vector is invariant under the multiplication by minus one. To prove the results we essentially use the facts that Hermann actions are hyperpolar and all orbits of Hermann actions are curvature-adapted submanifolds. The results greatly extend the author's previous result in the case of the standard sphere and show that there exist a larger number of infinite dimensional austere submanifolds in Hilbert spaces.
\end{abstract}

\section{Introduction} 
One way to study submanifolds in a compact symmetric space $G/K$ is to consider their lifts into a certain Hilbert space fibered over $G/K$. Let $V_\mathfrak{g} := L^2([0,1], \mathfrak{g})$ denote the Hilbert space of all $L^2$-paths from $[0,1]$ to the Lie algebra $\mathfrak{g}$ of $G$. Terng and Thorbergsson \cite{TT95} introduced a natural Riemannian submersion $\Phi: V_\mathfrak{g} \rightarrow G $ which is called the \emph{parallel transport map}. Denote by $\pi : G \rightarrow G/K$ the natural Riemannian submersion and consider the composition $\pi \circ \Phi : V_\mathfrak{g} \rightarrow G \rightarrow G/K$ which is a Riemannian submersion denoted by $\Phi_K$. It follows that if $N$ is a closed submanifold of $G/K$ then the inverse image $\Phi_K^{-1}(N)$ is a \emph{proper Fredholm} (PF) submanifold of $V_\mathfrak{g}$ (\cite{Ter89}). By definition the shape operators of PF submanifolds are compact self-adjoint operators. Although $\Phi_K^{-1}(N)$ is infinite dimensional, many techniques and results in the finite dimensional Euclidean case are still valid in the case of Hilbert space $V_\mathfrak{g}$. Applying those techniques and results to the lifted submanifolds they studied submanifold geometry in symmetric spaces. It is a fundamental problem to show the geometrical relation between $N$ and $\Phi_K^{-1}(N)$. 

Afterwards Koike \cite{Koi02} gave a formula for the principal curvatures of $\Phi_K^{-1}(N)$ under the assumption that $N$ is \emph{curvature-adapted}, that is, for each normal vector $v$ at each $p \in N$ the Jacobi operator $R_v$ leaves the tangent space $T_p N$ invariant and the restriction $R_v|_{T_p N}$ commutes with the shape operator $A_v$.  The author \cite{M2} corrected inaccuracies in that formula and studied the relation between two conditions:
\begin{enumerate}
\item[(A)] $N$ is an austere submanifold of $G/K$,
\item[(B)] $\Phi_K^{-1}(N)$ is an austere PF submanifold of $V_\mathfrak{g}$.
\end{enumerate}
Here a submanifold is called \emph{austere} (\cite{HL82}) if the set of principal curvatures with multiplicities in the direction of each normal vector is invariant under the multiplication by $(-1)$. The author showed (\cite[Theorem 4.1]{M2}):
\begin{thms}[\cite{M2}]
If $G/K$ is the standard sphere then \textup{(A)} and \textup{(B)} are equivalent.
\end{thms}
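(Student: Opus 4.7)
The plan rests on the observation that when $G/K = S^n$, the sphere has constant sectional curvature, so for any normal vector $v$ to any submanifold $N$ the Jacobi operator $R_v$ acts as $|v|^2$ times the identity on $v^\perp$; in particular it preserves $T_pN$ and commutes with the shape operator $A_v$. Hence \emph{every} submanifold of $S^n$ is automatically curvature-adapted, and the principal curvature formula for $\Phi_K^{-1}(N)$ established in \cite{M2} applies unconditionally. First I would recall this formula and specialize it to the constant curvature case, where the data governing the spectrum of the shape operators of $\Phi_K^{-1}(N)$ reduce to the spectrum of $A_v$ together with discrete ``fiber'' and ``normal'' contributions that are manifestly symmetric about $0$.

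For the direction (A) $\Rightarrow$ (B), I would show that the involution $\lambda \mapsto -\lambda$ on the spectrum of $A_v$ induces, via an explicit re-indexing of the integer parameter labeling the principal curvatures of the lift, a multiplicity-preserving bijection on the set of principal curvatures of $\Phi_K^{-1}(N)$ that sends each value to its negative. Given the explicit trigonometric form of the formula, this reduces to the symmetry and periodicity of the cotangent function and becomes a direct verification once the formula is written out.

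For the direction (B) $\Rightarrow$ (A), the plan is to recover the multiset of eigenvalues of $A_v$ from the multiset of principal curvatures of $\Phi_K^{-1}(N)$ by an asymptotic argument: each eigenvalue $\lambda$ of $A_v$ contributes a distinguished sequence of principal curvatures indexed by $n \in \mathbb{Z}$, and the leading behaviour of this sequence as $|n| \to \infty$ identifies $\lambda$ uniquely. Austerity of $\Phi_K^{-1}(N)$ then forces the extracted multiset to be symmetric about $0$, which is precisely the austere condition for $N$.

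The main obstacle is the reverse implication. One must rule out the possibility that an asymmetric spectrum of $A_v$ produces, after combination with the fiber and normal contributions, a coincidentally symmetric set of principal curvatures in $V_\mathfrak{g}$. The asymptotic separation of contributions coming from different eigenvalues deals with most of this, but the bookkeeping of multiplicities---and verifying that no two distinct eigenvalues of $A_v$ give overlapping sequences of principal curvatures after accounting for the $n \in \mathbb{Z}$ indexing---is the delicate step that requires the most care.
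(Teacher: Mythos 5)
Your overall strategy is sound and matches the structure of the argument in \cite{M2} and of Theorem~\ref{main1} above: constant curvature makes every submanifold of $S^n$ curvature-adapted, so the principal-curvature formula for the lift (Theorem~\ref{pc1} with a single positive root $\alpha$, $\mathfrak{m}_0 = \mathfrak{t}$ one-dimensional and in the normal space) applies to every submanifold, the fiber contribution $\{\langle\alpha,\xi\rangle/n\pi\}_{n\neq 0}$ is already $(-1)$-symmetric, and (A)\,$\Rightarrow$\,(B) follows from the explicit re-indexing $(\lambda,m)\mapsto(\lambda',-m)$ with $\langle\lambda',\xi\rangle=-\langle\lambda,\xi\rangle$, using $\arctan(-x)=-\arctan(x)$.

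The place where your write-up is not yet a proof is the direction (B)\,$\Rightarrow$\,(A). As stated, the ``leading behaviour of the sequence as $|n|\to\infty$'' does \emph{not} identify $\lambda$: for every $\lambda$ the sequence $\langle\alpha,\xi\rangle/(\arctan\frac{\langle\alpha,\xi\rangle}{\langle\lambda,\xi\rangle}+m\pi)$ has the same leading asymptotics $\langle\alpha,\xi\rangle/(m\pi)+O(m^{-2})$, so the $\lambda$-dependence sits in the subleading term, and the argument as phrased does not close. Moreover, you flag the non-overlap of sequences as ``the delicate step that requires the most care'' but do not supply the needed fact. The clean way to finish, and what the paper does in the analogous step of Theorem~\ref{main1}, is to avoid asymptotics entirely and use the injectivity of the parametrization: since $\arctan\frac{\langle\alpha,\xi\rangle}{\langle\lambda,\xi\rangle}$ takes values in $(-\pi/2,\pi/2]$, distinct pairs $(\lambda,m)$ give distinct values of $\arctan\frac{\langle\alpha,\xi\rangle}{\langle\lambda,\xi\rangle}+m\pi$ and these are all distinct from the values $n\pi$; equivalently, taking reciprocals exhibits the nonzero principal curvatures of the lift as a disjoint union of arithmetic progressions of common difference $\pi/\langle\alpha,\xi\rangle$, one per residue class, with the fiber contribution occupying the residue class $0$. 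Hence each nonzero principal curvature of $\Phi_K^{-1}(N)$ determines its source $(\lambda,m)$ (or $n$) and its multiplicity $m(\alpha,\lambda)$, and austerity of the lift forces, for each $(\lambda,m)$, a $(\lambda',m')$ with $\arctan\frac{\langle\alpha,\xi\rangle}{\langle\lambda,\xi\rangle}+m\pi = -\bigl(\arctan\frac{\langle\alpha,\xi\rangle}{\langle\lambda',\xi\rangle}+m'\pi\bigr)$ and $m(\alpha,\lambda)=m(\alpha,\lambda')$; the range restriction on $\arctan$ then forces $m'=-m$ and $\langle\lambda',\xi\rangle=-\langle\lambda,\xi\rangle$ (with the degenerate case $\langle\lambda,\xi\rangle=0$, where $\arctan:=\pi/2$, handled by $m'=-m-1$). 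This yields the austerity of $N$ directly. I recommend you replace the asymptotic step with this injectivity argument.
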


The purpose of this paper is to extend this theorem to the case that $G/K$ is not necessarily the standard sphere. However there are two difficulties to do this. The first one is that $N$ must be curvature-adapted in order to use the formula for the principal curvatures of $\Phi_K^{-1}(N)$, otherwise there is no way to compute those curvatures. The second one is that even if $N$ is curvature-adapted the principal curvatures of $\Phi_K^{-1}(N)$ and their multiplicities are complicated in general and it is not clear whether the austere properties of $N$ and $\Phi_K^{-1}(N)$ are equivalent or not.

In this paper we let $G/K$ be a symmetric space of compact type and suppose that $N$ is an orbit of a \emph{Hermann action} (\cite{Her60}), that is, an isometric action of a symmetric subgroup $H$ of $G$ on $G/K$. Here a closed subgroup $H$ of $G$ is called \emph{symmetric} if there exists an involutive automorphism $\tau$ of $G$ such that $H$ lies between the fixed point subgroup $G^\tau$ and its identity component. We know that all orbits of Hermann actions are curvature-adapted submanifolds (\cite{GT07}). Moreover we can explicitly describe the principal curvatures of orbits of Hermann actions (\cite{Ohno21}). Furthermore any Hermann action is \emph{hyperpolar} (\cite{Her62}, \cite{HPTT95}), that is, there exists a closed connected totally geodesic submanifold $\Sigma$ of $G/K$ which is flat in the induced metric and meets every orbit orthogonally. From this property we only have to consider normal vectors which are tangent to a fixed $\Sigma$ when studying the austere property of orbits (see Lemma \ref{lem2}).

We remark that if $N$ is an orbit of the $H$-action then the inverse image $\Phi_K^{-1}(N)$ is an orbit of a path group action. Let $\mathcal{G} := H^1([0, 1], G)$ denote the Hilbert Lie group of all Sobolev $H^1$-paths from $[0,1]$ to $G$. Consider the isometric action of $\mathcal{G}$ on $V_\mathfrak{g}$ given by the gauge transformations
\begin{equation*}
g*u = gug^{-1} - g' g^{-1},
\end{equation*}
where $g \in \mathcal{G}$, $u \in V_\mathfrak{g}$ and $g'$ denotes the weak derivative of $g$. Then the subgroup 
\begin{equation*}
P(G, H \times K)
=
\{g \in \mathcal{G} \mid g(0) \in H, \ g(1) \in K\}
\end{equation*}
acts on $V_\mathfrak{g}$ by the same formula. It follows that if $N$ is the $H$-orbit through $(\exp w)K$ for $w \in \mathfrak{g}$ then $\Phi_K^{-1}(N)$ is just the $P(G, H \times K)$-orbit through the constant path $\hat{w}$ with value $w$ (\cite{Ter95}). Therefore the conditions (A) and (B) are restated as follows:
\begin{enumerate}
\item[(A)] the orbit $H \cdot (\exp w) K$ through $(\exp w) K$ is an austere submanifold of $G/K$,
\item[(B)] the orbit $P(G, H \times K)* \hat{w}$ through $\hat{w}$ is an austere PF submanifold of $V_\mathfrak{g}$.
\end{enumerate}
Note that since the $H$-action is hyperpolar the $P(G, H \times K)$-action is also hyperpolar (\cite{HPTT95}, \cite{Ter95}).

In this paper we first derive an explicit formula for the principal curvatures of $P(G, H \times K)$-orbits (Theorem \ref{thm1}), which unifies and generalizes some results by Terng \cite{Ter89}, Pinkall-Thorbergsson \cite{PiTh90} and Koike \cite{Koi11} (see Remark \ref{rem5.4}). Then using this explicit formula we study the relation between (A) and (B). To explain the results we write $\sigma$ and $\tau$ for the involutions of $G$ associated to the symmetric subgroups $K$ and $H$ respectively. We denote by $\mathfrak{g} = \mathfrak{k} + \mathfrak{m}$ (resp.\ $\mathfrak{g} = \mathfrak{h} + \mathfrak{p}$) the decomposition into the $(\pm1)$-eigenspaces of the differential of $\sigma$ (resp.\ $\tau$). Take a maximal abelian subspace $\mathfrak{t}$ in $\mathfrak{m} \cap \mathfrak{p}$ and denote by $\Delta$ the root system of $\mathfrak{t}$ associated to the adjoint representation of $\mathfrak{t}$ on $\mathfrak{g}^\mathbb{C}$. We will prove the following theorem (Theorem \ref{main1}):
\begin{thmI}
If $\Delta$ is a reduced root system then \textup{(A)} and \textup{(B)} are equivalent. 
\end{thmI}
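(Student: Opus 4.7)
\emph{Plan of proof.} The strategy is to reduce both austere conditions to the same combinatorial symmetry of the root data $\Delta$, using the explicit curvature formulas of Theorem \ref{thm1} and the Hermann-action formula of Ohno, and then to observe that under reducedness the $(-1)$-symmetry of the countable PF multiset is equivalent to that of the finite-dimensional one.

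First I would invoke hyperpolarity. Both the $H$-action on $G/K$ and the induced $P(G, H\times K)$-action on $V_\mathfrak{g}$ are hyperpolar, with sections $\exp\mathfrak{t}$ and the constant paths $\widehat{\mathfrak{t}}$ respectively. By Lemma \ref{lem2} the austere property of either orbit can be tested using only normal vectors tangent to the section, so it suffices to compare the principal curvatures at $(\exp w)K$ in directions $v\in\mathfrak{t}$ with those at $\hat{w}$ in directions $\hat{v}$. Without loss of generality I may assume $w\in\mathfrak{t}$.

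Second, by Theorem \ref{thm1} the principal curvatures of $P(G,H\times K)\ast\hat{w}$ in direction $\hat{v}$ form a countable multiset indexed by pairs $(\alpha,n)\in\Delta\times\mathbb{Z}$, the typical term being of the form $\alpha(v)/(\alpha(w)+n\pi)$ or $\alpha(v)/(\alpha(w)+(n+\tfrac{1}{2})\pi)$ according to the $\sigma\tau$-type of the corresponding piece of the root space $\mathfrak{g}_\alpha$, weighted by the refined multiplicities $m_\alpha^{\pm\pm}$ of the simultaneous $\sigma,\tau$-eigenspace decomposition. By Ohno's formula \cite{Ohno21}, applied in conjunction with the curvature-adaptedness of Hermann orbits \cite{GT07}, the principal curvatures of $H\cdot(\exp w)K$ in direction $v$ form a finite multiset with terms $-\cot(\alpha(w))\alpha(v)$ or $\tan(\alpha(w))\alpha(v)$ weighted by the same refined multiplicities. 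The two multisets are linked by the Mittag--Leffler expansions of $\cot$ and $\tan$, which exhibit each finite-dimensional curvature as a principal-value sum over the corresponding $\mathbb{Z}$-family on the PF side.

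Third, I would encode the $(-1)$-symmetry of each multiset as a multiplicity-preserving involution on its index set: on $\Delta$ for (A), and on $\Delta\times\mathbb{Z}$ for (B), respecting the $\sigma\tau$-type in each case. The implication (B)$\Rightarrow$(A) is then obtained by projecting an involution on $\Delta\times\mathbb{Z}$ onto its first coordinate, while (A)$\Rightarrow$(B) is obtained by lifting a root-level involution $\alpha\mapsto\alpha^\ast$ (which imposes $\alpha^\ast(v)=-\alpha(v)$ and $\alpha^\ast(w)\equiv\alpha(w)\pmod\pi$ in the appropriate sense) to an involution on pairs by choosing the integer shift $n^\ast$ uniquely determined by the congruence.

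The delicate point, and the main obstacle I anticipate, is to verify that these projection and lifting operations are well-defined and multiplicity-preserving, and it is exactly here that reducedness of $\Delta$ is used. If both $\alpha$ and $2\alpha$ belonged to $\Delta$ and had opposite $\sigma\tau$-types, the shift lattices $\pi\mathbb{Z}$ attached to $\alpha$ and $\tfrac{\pi}{2}\mathbb{Z}$ attached to $2\alpha$ would interleave after rescaling, so that a $(-1)$-symmetry of the PF multiset could pair contributions from $\alpha$ with contributions from $2\alpha$ and fail to descend to a root-level involution. Reducedness rules this out and forces the pair-level involution to respect the root stratification. The residual work is then a careful bookkeeping of the refined multiplicities $m_\alpha^{\pm\pm}$ across the correspondence, which I expect to be routine once the root-stratified structure is in place.
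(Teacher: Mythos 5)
Your proposal is correct and follows essentially the same route as the paper's proof: Lemma \ref{lem2} encodes both austere conditions as $(-1)$-invariance of a finite multiset indexed by $(\alpha,\epsilon)$ and a countable multiset indexed by $(\alpha,\epsilon,m)$, reducedness forces any $(-1)$-pairing to preserve the root $\alpha$, and the integer shift $m'=-n-m$ determined by the resulting congruence completes the lift, exactly as you sketch via projection and lifting of involutions. Two minor points: the Mittag--Leffler identity is a helpful heuristic but plays no role in the actual argument, which rests only on strict monotonicity of $\cot$ together with the integer congruence; and your description of the denominators as $\alpha(w)+n\pi$ or $\alpha(w)+(n+\tfrac{1}{2})\pi$ tacitly restricts to the commuting case $\epsilon=\pm1$, whereas Theorem \ref{thm1} and Lemma \ref{lem2} allow general $\epsilon\in U(1)$ with denominators $\langle\alpha,w\rangle+\tfrac{1}{2}\arg\epsilon+m\pi$, though the structure of the argument is unchanged.
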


Then without supposing that $\Delta$ is a reduced root system we will prove the following theorem (Theorems \ref{prop1}, \ref{thm:commute} and \ref{thm:main}): 

\begin{thmII}\ 
\begin{enumerate}
\item Suppose that $\sigma = \tau$. Then \textup{(A)} and \textup{(B)} are equivalent.
\item Suppose that $\sigma$ and $\tau$ commute. Then \textup{(A)} implies \textup{(B)}.
\item Suppose that $G$ is simple. Then \textup{(A)} implies \textup{(B)}. 
\end{enumerate}
\end{thmII}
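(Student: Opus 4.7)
The plan is to build directly on Theorem \ref{thm1}, the explicit formula for the principal curvatures of $P(G,H\times K)$-orbits, together with the strategy used for Theorem I. Hyperpolarity of the $P(G,H\times K)$-action reduces the austere test to normal vectors lying in a section $\mathfrak{t}$ (Lemma \ref{lem2}), and curvature-adaptedness of Hermann orbits lets us express the principal curvatures of both $N = H\cdot(\exp w)K$ and $\Phi_K^{-1}(N) = P(G,H\times K)*\hat{w}$ in terms of $\Delta$ and its multiplicity data. When $\Delta$ is reduced, the Fourier index $n\in\mathbb{Z}$ entering the PF-orbit curvatures generates a $\pm$-symmetric pattern that matches the finite multiset root-by-root; the obstruction in the non-reduced case is precisely the interaction of short roots $\alpha$ and long roots $2\alpha$ carrying distinct multiplicities inside a $BC_r$-component. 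Each part of Theorem II identifies extra structure that neutralizes this obstruction, in progressively weaker forms.

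For part (1), where $\sigma=\tau$ and hence $H=K$, the Hermann action is the isotropy action and $\mathfrak{t}$ is a maximal abelian subspace of $\mathfrak{m}$, so $\Delta$ is the restricted root system of $G/K$. I would specialize Theorem \ref{thm1}: the four generalized multiplicities appearing in the general Hermann setup collapse to the two standard multiplicities of a symmetric space, the principal curvatures of $N = K\cdot(\exp w)K$ are the classical $\cot(\alpha(w))$-type expressions, and those of $\Phi_K^{-1}(N)$ are indexed by $\alpha\in\Delta^+$ and $n\in\mathbb{Z}$ with the same $\cot$ data recovered at $n=0$. Matching the two descriptions $\alpha$ by $\alpha$ shows that, in every $BC_r$-component, the same sign-symmetry condition on $(\Delta,w)$ governs both (A) and (B), giving the equivalence.

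For part (2), when $\sigma\tau=\tau\sigma$, the algebra $\mathfrak{g}^\mathbb{C}$ decomposes into joint $\pm$-eigenspaces of $\sigma_*$ and $\tau_*$, and each $\alpha\in\Delta$ carries four multiplicities $m^{\varepsilon,\eta}_\alpha$ refining its total multiplicity. Theorem \ref{thm1} then splits the principal-curvature formula into four blocks corresponding to this $\mathbb{Z}_2\times\mathbb{Z}_2$-grading. If (A) holds, the $\pm$-symmetry of the finite multiset in every normal direction $\xi\in\mathfrak{t}$ yields identities among the $m^{\varepsilon,\eta}_\alpha$ at the specific phases $\alpha(w)$. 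Substituting these identities back into the PF-orbit formula, the Fourier averaging over $n$ only strengthens the resulting symmetry, so (B) follows. The converse may fail precisely because Fourier summation can conceal asymmetry in the finite-dimensional data, which is why only one implication is claimed.

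For part (3), with $G$ simple, I would combine part (2) with the classification of Hermann actions on simple symmetric spaces to reduce to the non-commuting cases, which form a short explicit list. For each such case, I would feed Ohno's root data from \cite{Ohno21} into Theorem \ref{thm1} and verify (A)$\Rightarrow$(B) by direct inspection of the two multisets. The main obstacle I anticipate is exactly this non-commuting sublist: without the joint $\mathbb{Z}_2\times\mathbb{Z}_2$-grading, the four-fold multiplicities $m^{\varepsilon,\eta}_\alpha$ are not defined, and one must argue using the coarser decomposition coming from $\tau$ alone, together with the structural restrictions that simplicity of $G$ imposes on the possible $BC_r$-components and on how $\sigma$ and $\tau$ can fail to commute. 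I expect the classification to leave few enough cases that this final step is mechanical, if tedious.
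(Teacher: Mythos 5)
Your outline identifies the right starting objects (Theorem \ref{thm1}, Lemma \ref{lem2}, the reduction to a section $\mathfrak{t}$, and the $\{\alpha,2\alpha\}$ obstruction inside a $BC$-component), but the engine that makes all three parts run in the paper is missing from your plan: the inequality $m(\alpha) > m(2\alpha)$ whenever both $\alpha$ and $2\alpha$ are roots (Lemma \ref{lem3}). Without it, none of your three arguments closes. In part (1) you propose to ``match the two descriptions $\alpha$ by $\alpha$,'' but in a non-reduced component that matching is exactly what can fail --- the counterexample in Section \ref{counterexample} exists precisely because the infinite family over $\alpha$ can absorb sign-asymmetry that the finite multiset cannot. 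What the paper actually does in Theorem \ref{prop1} is use $m(\alpha)>m(2\alpha)$ to rule out any cross-pairing between $\alpha$ and $2\alpha$; this forces $\langle\alpha,w\rangle\in\frac{\pi}{2}\mathbb{Z}$ for every $\alpha$ that is not normal, i.e.\ the orbit is totally geodesic, and then (B) follows trivially. Your sketch never arrives at the totally-geodesic conclusion, which is the actual mechanism.

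In part (2) your claim that ``Fourier averaging over $n$ only strengthens the resulting symmetry'' is precisely the unverified assertion that the paper has to work to establish, and it is not true in general (again, see the counterexample). The paper's proof of Theorem \ref{thm:commute} again hinges on $m(\alpha)>m(2\alpha)$: multiplicity counting forces a symmetric cancellation \emph{within} the $\alpha$-block, which combined with $\epsilon,\epsilon'\in\{\pm1\}$ forces $\langle\alpha,w\rangle\in\frac{\pi}{4}\mathbb{Z}$, whence the $2\alpha$-contributions $Y$ and $W$ are empty and the remaining $X$, $Z$ are separately symmetric. You should state this chain explicitly rather than appeal to a heuristic about Fourier sums.

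For part (3) you propose a case-by-case verification of the short Matsuki list, which could in principle work but is much heavier than the paper's uniform argument. The paper's proof of Theorem \ref{thm:main} uses only one structural fact from the classification --- that for non-commutative, non-equivalent-to-commutative pairs on simple $G$ the order $l$ of $\sigma\circ\tau$ is $3$ or $4$ --- and then shows $\langle\alpha,w\rangle\in\frac{\pi}{2l}\mathbb{Z}$, after which a direct computation of the finitely many possible cotangent values shows that the $\alpha$- and $2\alpha$-blocks cannot cancel against each other, so each is separately sign-symmetric. You should replace your proposed inspection of root data by this uniform argument, and in any case you still need $m(\alpha)>m(2\alpha)$ to get the crucial constraint on $\langle\alpha,w\rangle$.
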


Note that (B) does not imply (A) in the cases (ii) and (iii). In fact we will show a counterexample of a minimal $H$-orbit which is \emph{not} austere but the corresponding $P(G, H \times K)$-orbit is austere (cf.\ Section \ref{counterexample}). Without the assumption of (ii) or (iii) we do not know whether (A) implies (B) or not, because in the non-simple case there exist many non-commutative pairs of involutive automorphisms of $G$ (\cite{Mat02}). However the above theorems greatly extend the previous theorem in the spherical case and cover all known examples of austere orbits of Hermann actions (\cite{Ika11}, \cite{Ohno21}). Applying those examples to the above theorems we obtain a larger number of infinite dimensional austere PF submanifolds in Hilbert spaces.  Notice that so obtained  austere PF submanifolds are not totally geodesic due to \cite{M1}. 

This paper is organized as follows. In Section \ref{preliminaries} we review basic knowledge on $P(G, H \times K)$-actions and the parallel transport map. In Section \ref{Hermann} we review fundamental results on the submanifold geometry of orbits of Hermann actions. In Section \ref{cap} we introduce a hierarchy of curvature-adapted submanifolds in symmetric spaces and formulate the curvature-adapted property of orbits of Hermann actions. In Section \ref{pcvptm} we refine the formula for the principal curvatures of $\Phi_K^{-1}(N)$ (\cite{Koi02},  \cite{M2}) so that orbits of Hermann actions can be applied. In Section \ref{pcpo}, applying orbits of Hermann actions to the refined formula we derive an explicit formula for the principal curvatures of $P(G, H \times K)$-orbits. In Section \ref{austere:reduced}, using this explicit formula we formulate the conditions (A) and (B) in terms of roots in $\Delta$ and prove Theorem I. In Section \ref{austere:general} we show an inequality between the multiplicities of roots $\alpha$ and $2 \alpha$ in $\Delta$ and prove Theorem II. Finally in Section \ref{counterexample} we show a counterexample to the converse of (ii) and (iii) of Theorem II and mention further remarks on the converse. 

\section{Preliminaries}\label{preliminaries}

Let $G$ be a connected compact semisimple Lie group and $K$ a closed subgroup of $G$. Suppose that $K$ is a symmetric subgroup of $G$, that is, there exists an involutive automorphism $\sigma$ of $G$ satisfying the condition 
$G^\sigma_0 \subset K \subset G^\sigma$, where $G^\sigma$ denotes the fixed point subgroup of $G$ and $G^\sigma_0$ its identity component.  We denote by $\mathfrak{g}$ and $\mathfrak{k}$ the Lie algebras of $G$ and $K$ respectively. The differential of $\sigma$ induces an involutive automorphism of $\mathfrak{g}$, which is still denoted by $\sigma$. The direct sum decomposition $\mathfrak{g} = \mathfrak{k} + \mathfrak{m}$ into the $(\pm1)$-eigenspaces of $\sigma$ is called the \emph{canonical decomposition}. We fix an $\operatorname{Ad}(G)$-invariant inner product $\langle \cdot, \cdot \rangle$ of $\mathfrak{g}$ which is a negative multiple of the Killing form of $\mathfrak{g}$. Then it is invariant under all automorphisms of $\mathfrak{g}$ and the canonical decomposition is orthogonal. We equip the corresponding bi-invariant Riemannian metric with $G$ and the $G$-invariant Riemannian metric with the homogeneous space $G/K$.  Then $M := G/K$ is a symmetric space of compact type and the projection $\pi: G \rightarrow M$ is a Riemannian submersion with totally geodesic fiber. 

We denote by $\mathcal{G} := H^1([0,1], G)$ the path group of all Sobolev $H^1$-paths from $[0,1]$ to $G$ and by $V_\mathfrak{g} := L^2([0,1], \mathfrak{g})$ the path space of all $L^2$-paths from $[0,1]$ to $\mathfrak{g}$. Then $\mathcal{G}$ is a Hilbert Lie group and $V_\mathfrak{g}$ a separable Hilbert space. We consider the isometric action of $\mathcal{G}$ on $V_\mathfrak{g}$ given by the gauge transformations
\begin{equation*}
g * u := gug^{-1} - g' g^{-1},
\end{equation*}
where $g \in \mathcal{G}$ and $u \in V_\mathfrak{g}$. We know that this action is proper and Fredholm (\cite[Theorem 5.8.1]{PT88}, \cite[Section 4]{Ter89}). For any closed subgroup $L$ of $G \times G$ the subgroup
\begin{equation*}
P(G, L) := \{g \in \mathcal{G} \mid (g(0), g(1)) \in L\}
\end{equation*}
acts on $V_\mathfrak{g}$ by the same formula. It follows that the $P(G, L)$-action is also proper and Fredholm (\cite[p.\ 132]{Ter95}). Thus every orbit of the $P(G, L)$-action is a proper Fredholm (PF) submanifold of $V_\mathfrak{g}$ (\cite[Theorem 7.1.6]{PT88}). We know that the $P(G, \{e\} \times G)$-action on $V_\mathfrak{g}$ is simply transitive (\cite[Corollary 4.2]{TT95}) and that the $P(G, G \times \{e\})$-action on $V_\mathfrak{g}$ is also simply transitive (\cite[Section 5]{M1}). 

The \emph{parallel transport map} (\cite{Ter95}, \cite{TT95}) $\Phi: V_\mathfrak{g} \rightarrow G$ is defined by 
\begin{equation*}
\Phi(u) := g_u(1),
\end{equation*}
where $g_u \in \mathcal{G}$ is the unique solution to the linear ordinary differential equation
\begin{equation*}
\left\{\begin{array}{l}
g_u^{-1} g'_u = u,
\\
g_u(0) = e.
\end{array}\right.
\end{equation*}
We know that $\Phi$ is a Riemannian submersion and a principal $\Omega_e(G)$-bundle, where $\Omega_e(G) = P(G, \{e\} \times \{e\})$ denotes the based loop group (\cite[Corollary 4.4, Theorem 4.5]{TT95}). The normal space of the fiber $\Phi^{-1}(e)$ at $\hat{0} \in V_\mathfrak{g}$ is identified with the subspace $\hat{\mathfrak{g}} = \{\hat{x} \mid x \in \mathfrak{g}\}$ of $V_\mathfrak{g}$, where $\hat{x}$ denotes the constant path with value $x$. It follows that $\Phi(\hat{x}) = \exp x$. The composition
\begin{equation*}
\Phi_K := \pi \circ \Phi : V_\mathfrak{g} \rightarrow G \rightarrow M
\end{equation*} 
is a Riemannian submersion which is also called the parallel transport map. 

We consider the isometric action of $G$ on $M$ defined by 
\begin{equation*}
b \cdot (aK) := (ba) K,
\end{equation*}
where $b \in G$ and $aK \in M$, and the isometric action of $G \times G$ on $G$ defined by 
\begin{equation*}
(b, c) \cdot a := b a c^{-1},
\end{equation*}
where $(b,c) \in G \times G$ and $a \in G$. Then $\pi$ and $\Phi$ have the following equivariant properties (\cite[Proposition 1.1 (i)]{Ter95}):
\begin{align}
&\label{equiv1}
\pi((b,c) \cdot a) = b \cdot \pi(a) \qquad \text{for} \ (b,c) \in G \times K \ \text{and}\ a \in G,
\\
& \label{equiv2}
\Phi(g * u) = (g(0), g(1)) \cdot \Phi(u)  \qquad \text{for} \ g \in \mathcal{G} \ \text{and}\  u \in V_\mathfrak{g}.
\end{align}
From these we have
\begin{equation}\label{equiv}
\Phi_K(g * u) = g(0) \Phi_K(u)
\qquad \text{for} \ g \in P(G, G \times K) \ \text{and}\  u \in V_\mathfrak{g}.
\end{equation}

Let $H$ be a closed subgroup of $G$; in later sections we will suppose that it is a symmetric subgroup of $G$. We denote by $\mathfrak{h}$ the Lie algebra of $H$ and $\mathfrak{g} = \mathfrak{h} + \mathfrak{p}$ the orthogonal direct sum decomposition. Then $H$ acts on $M$, the subgroup $H \times K$ acts on $G$ and the subgroup $P(G, H \times K)$ acts on $V_\mathfrak{g}$. We know the following relations for orbits (\cite[Proposition 1.1 (ii)]{Ter95}):
\begin{equation*}
(H \times K) \cdot a = \pi^{-1}(H \cdot aK)
 \quad \text{and} \quad
P(G, H \times K) * u = \Phi^{-1}((H \times K) \cdot \Phi(u)).
\end{equation*}
Thus we have 
\begin{equation}\label{inverse}
P(G, H \times K) * u
=
\Phi_K^{-1}(H \cdot \Phi_K(u)).
\end{equation}
Then we obtain the commutative diagram
\begin{equation*}
\begin{array}{ccccccccc}
\mathcal{G} \!&\! \supset \!&\! P(G, H \times K) \!&\!\curvearrowright\!&\! V_\mathfrak{g} \!&\! \supset \!&\! P(G, H \times K) * u  \!&\!=\!&\! \Phi^{-1}((H \times K) \cdot a) \medskip
\\
\psi \downarrow \ \ \ \!&\! \!&\! \psi \downarrow \ \ \ \ \!&\! \!&\! \Phi \downarrow \ \ \ \!&\!\!&\!\Phi \downarrow \ \ \ \!&\!\!&\!\Phi \downarrow \ \ \ \medskip
\\
G \times G \!&\! \supset \!&\! H \times K \!&\!\curvearrowright\!&\! G \!&\! \supset \!&\!  (H \times K) \cdot a \!&\! = \!&\! \pi^{-1}(H \cdot a K) \medskip
\\
p \downarrow \ \ \ \!&\! \!&\! p \downarrow \ \ \ \!&\! \!&\! \pi \downarrow \ \ \ \!&\!\!&\!\pi \downarrow \ \ \ \!&\!\!&\!\medskip
\\
G\!&\! \supset \!&\! H\!&\!\curvearrowright\!&\! M \!&\! \supset \!&\! H \cdot aK \!&\!\!&\! (\Phi(u) = a),
\end{array}
\end{equation*}
where $p$ denotes the projection onto the first component and $\psi$ the submersion defined by $\psi(g) := (g(0),g(1))$ for $g \in \mathcal{G}$. We know that the following conditions are equivalent: the orbit $H \cdot aK$ is a minimal submanifold of $M$, the orbit $(H \times K) \cdot a$ is a minimal submanifold of $G$, and the orbit $P(G, H \times K) * u$ through $u \in \Phi^{-1}(a)$ is a minimal PF submanifold of $V_\mathfrak{g}$ (\cite[Theorem, 4.12]{KT93}, \cite[Lemma 5.2]{HLO06}).

Recall that an isometric action of a compact Lie group $A$ on a Riemannian manifold $X$ is called \emph{polar} if there exists a closed connected submanifold $\Sigma$ of $X$ which meets every $A$-orbit and is orthogonal to the $A$-orbits at every point of intersection. Such a $\Sigma$ is called a \emph{section}, which is automatically totally geodesic in $X$. If $\Sigma$ is flat in the induced metric then the action is called \emph{hyperpolar} (\cite{HPTT95}). For a proper Fredholm action on a Hilbert space we can define it to be hyperpolar by the similar way. We know that the following conditions are equivalent (\cite[Proposition 2.11]{HPTT95}, \cite[Theorem 1.2]{Ter95}, \cite[Lemma 4]{GT02}):
\begin{enumerate}
\item the $H$-action on $M$ is hyperpolar, 
\item the $H \times K$-action on $G$ is hyperpolar,
\item the $P(G,H \times K)$-action on $V_\mathfrak{g}$ is hyperpolar.
\end{enumerate}
Since we fixed a bi-invariant Riemannian metric on $G$ induced by a negative multiple of the Killing form of $\mathfrak{g}$, the condition (ii) is equivalent to the existence of a $c$-dimensional abelian subspace $\mathfrak{t}$ in $\mathfrak{m} \cap \mathfrak{p}$ where $c$ is the cohomogeneity of the $H \times K$-action (\cite[Theorem 2.1]{HPTT95}). Then $\pi(\exp \mathfrak{t})$, $\exp \mathfrak{t}$ and $\hat{\mathfrak{t}} = \{\hat{x} \mid x \in \mathfrak{t}\}$ are sections of the $H$-action, the $H \times K$-action and the $P(G, H \times K)$-action respectively. If the actions are hyperpolar then the following conditions are equivalent (\cite[Theorem 1.2]{Ter95}): $aK \in M$ is a regular point of the $H$-action, $a \in G$ is a regular point of the $H \times K$-action, and $u \in \Phi^{-1}(a)$ is a regular point of the $P(G, H \times K)$-action. Here a point is called \emph{regular} if the orbit though it is principal.


\section{Submanifold geometry of orbits of Hermann actions}\label{Hermann}

In this section we review fundamental results on the submanifold geometry of orbits of Hermann actions. For details, see 
Ohno \cite{Ohno21} (see also Goertsches-Thorbergsson \cite{GT07} and Ikawa \cite{Ika11}). Throughout this section $M = G/K$ denotes a symmetric space of compact type and $H$ a symmetric subgroup of $G$. We denote by $\sigma$ and $\tau$ the involutions of $G$ associated to $K$ and $H$ respectively and by $\mathfrak{g} = \mathfrak{k} + \mathfrak{m}$ and $\mathfrak{g} = \mathfrak{h} + \mathfrak{p}$ the canonical decompositions. We choose and fix a maximal abelian subspace $\mathfrak{t}$ in $\mathfrak{m} \cap \mathfrak{p}$ so that $\Sigma := \pi(\exp \mathfrak{t})$ is a section of the Hermann action. 

Consider the root space decomposition of $\mathfrak{g}^\mathbb{C}$ with respect to $\mathfrak{t}$:
\begin{equation*}
\mathfrak{g}^\mathbb{C} 
= 
\mathfrak{g}(0) + \sum_{\alpha \in \Delta} \mathfrak{g}(\alpha),
\end{equation*}
\begin{align*}
\mathfrak{g}(0)
&=
\{z \in \mathfrak{g}^\mathbb{C} \mid \operatorname{ad}(\eta) z = 0 \ \text{ for all} \ \eta \in \mathfrak{t} \},
\\
\mathfrak{g}(\alpha)
&=
\{z \in \mathfrak{g}^\mathbb{C} \mid \operatorname{ad}(\eta) z = \sqrt{-1} \langle \alpha, \eta \rangle z \ \text{ for all} \ \eta \in \mathfrak{t}\},
\end{align*}
where $\Delta = \{\alpha \in \mathfrak{t} \backslash \{0\} \mid \mathfrak{g}(\alpha) \neq \{0\}\}$ is a root system of $\mathfrak{t}$ (\cite[Lemma 4.12]{Ika11}). Since $\overline{\mathfrak{g}(\alpha)} = \mathfrak{g}(- \alpha)$, where the bar denotes the complex conjugation, the real form is 
\begin{equation*}
\mathfrak{g} = \mathfrak{g}_0 + \sum_{\alpha \in \Delta^+} \mathfrak{g}_\alpha,
\end{equation*}
\begin{equation*}
\mathfrak{g}_0 = \mathfrak{g}(0) \cap \mathfrak{g}
, \qquad
\mathfrak{g}_\alpha = (\mathfrak{g}(\alpha) + \mathfrak{g}(- \alpha)) \cap \mathfrak{g}.
\end{equation*}
Note that
\begin{align*}
\mathfrak{g}_0
&=
\{x \in \mathfrak{g} \mid \operatorname{ad}(\eta) x = 0 \ \text{ for all} \ \eta \in \mathfrak{t}  \},
\\
\mathfrak{g}_\alpha
&=
\{x \in \mathfrak{g} \mid  \operatorname{ad}(\eta)^2 x = - \langle \alpha, \eta \rangle ^2 x \ \text{ for all} \ \eta \in \mathfrak{t} \}.
\end{align*}
Since $\sigma$ commutes with $\operatorname{ad}(\eta)^2$ for all $\eta \in \mathfrak{t}$  we have 
\begin{equation*}
\mathfrak{k} = \mathfrak{k}_0 + \sum_{\alpha \in \Delta^+} \mathfrak{k}_\alpha
, \qquad
\mathfrak{m} = \mathfrak{m}_0 + \sum_{\alpha \in \Delta^+} \mathfrak{m}_\alpha.
\end{equation*}
\begin{equation*}
\begin{array}{lcl}
\mathfrak{k}_0
=
\mathfrak{g}_0 \cap \mathfrak{k},
&&
\mathfrak{m}_0
=
\mathfrak{g}_0 \cap \mathfrak{m},
\medskip
\\
\mathfrak{k}_\alpha
=
\mathfrak{g}_\alpha \cap \mathfrak{k},
&&
\mathfrak{m}_\alpha
=
\mathfrak{g}_\alpha \cap \mathfrak{m}.
\end{array}
\end{equation*}
We define a linear orthogonal transformation $\psi_\alpha$ of $\mathfrak{g}_\alpha$ by 
\begin{equation}\label{psi}
\psi_\alpha (x) 
:= 
\frac{1}{\langle \alpha, \alpha \rangle} \operatorname{ad}(\alpha) x
\qquad \text{for} \ x \in \mathfrak{g}_{\alpha}.
\end{equation}
An equivalent definition is that 
\begin{equation*}
\psi_\alpha(z + \bar{z}) := \sqrt{-1}(z - \bar{z})
\qquad \text{for} \ z \in \mathfrak{g}(\alpha).
\end{equation*}
Since $\sigma \circ \psi_\alpha = -(\psi_\alpha \circ \sigma )$ we have a linear isometry $\psi_\alpha: \mathfrak{m}_\alpha \rightarrow \mathfrak{k}_\alpha$. Set 
\begin{equation*}
m(\alpha) 
:=
\dim \mathfrak{k}_\alpha = \dim \mathfrak{m}_\alpha.
\end{equation*}
By setting $x^\alpha_i := \psi_\alpha(y^\alpha_i)$ we can take bases $\{x^\alpha_i\}_{i = 1}^{m(\alpha)}$ of $\mathfrak{k}_{\alpha}$ and $\{y^{\alpha}_i\}_{i = 1}^{m(\alpha)}$ of $\mathfrak{m}_\alpha$ satisfying 
\begin{equation}
[\eta, x^\alpha_i] = - \langle \alpha, \eta \rangle y^\alpha_i
\quad \text{and} \quad
[\eta, y^\alpha_i] = \langle \alpha, \eta \rangle x^\alpha_i
\end{equation}
for any $\eta \in \mathfrak{t}$.

The root space decompositions above are refined by combining a decomposition derived from the involutions $\sigma$ and $\tau$. More precisely we consider the composition
\begin{equation*}
\sigma \circ \tau :\mathfrak{g} \rightarrow \mathfrak{g}
\end{equation*}
and the eigenspace decomposition
\begin{equation*}
\mathfrak{g}^\mathbb{C}
=
\sum_{\epsilon \in U(1)} \mathfrak{g}(\epsilon),
\end{equation*}
\begin{equation*}
\mathfrak{g}(\epsilon)
=
\{z \in \mathfrak{g}^\mathbb{C} \mid (\sigma \circ  \tau)(z) = \epsilon z\},
\end{equation*}
where the eigenvalues belong to $U(1) = \{\epsilon \in \mathbb{C} \mid |\epsilon| = 1\}$. For each $\epsilon \in U(1)$ we denote by $\arg \epsilon$ its argument satisfying $- \pi < \arg \epsilon \leq \pi$. Since $\sigma \circ \tau$ commutes with $\operatorname{ad}(\eta)$ for all $\eta \in \mathfrak{t}$ we have
\begin{equation*}
\mathfrak{g}^\mathbb{C}
=
\sum_{\epsilon \in U(1)} \mathfrak{g}(0, \epsilon)
+
\sum_{\alpha \in \Delta}
\sum_{\epsilon \in U(1)} 
\mathfrak{g}(\alpha, \epsilon),
\end{equation*}
\begin{equation*}
\mathfrak{g}(0, \epsilon) = \mathfrak{g}(0) \cap \mathfrak{g}(\epsilon)
, \qquad
\mathfrak{g}(\alpha, \epsilon) = \mathfrak{g}(\alpha) \cap \mathfrak{g}(\epsilon).
\end{equation*}
Since $\overline{\mathfrak{g}(\alpha, \epsilon)} = \mathfrak{g}(- \alpha, \epsilon^{-1})$ the real form is
\begin{equation*}
\mathfrak{g}
=
\sum_{\epsilon \in U(1)_{\geq 0}} \mathfrak{g}_{0, \epsilon}
+
\sum_{\alpha \in \Delta^+}
\sum_{\epsilon \in U(1)} \mathfrak{g}_{\alpha, \epsilon},
\end{equation*}
\begin{equation*}
U(1)_{\geq 0} = \{\epsilon \in U(1) \mid \operatorname{Im}(\epsilon) \geq 0\},
\end{equation*}
\begin{align*}
&
\mathfrak{g}_{0, \epsilon}
=
( \mathfrak{g}(0, \epsilon) + \mathfrak{g}(0, \epsilon^{-1}) \cap \mathfrak{g},
\\
&
\mathfrak{g}_{\alpha, \epsilon}
=
(\mathfrak{g}(\alpha, \epsilon) + \mathfrak{g}(- \alpha, \epsilon^{-1})) \cap \mathfrak{g}.
\end{align*}
Setting $\rho^+ = \sigma \circ \tau + \tau \circ \sigma$ and $\rho^- = \sigma \circ \tau - \tau \circ \sigma$ we can write
\begin{align*}
\mathfrak{g}_{0, \epsilon}
&=
\{x \in \mathfrak{g}_0 \mid  \rho^+(x) =  2 \operatorname{Re}(\epsilon) x \},
\\
\mathfrak{g}_{\alpha, \epsilon}
&=
\{x \in \mathfrak{g}_\alpha \mid   \rho^+(x) = 2 \operatorname{Re}(\epsilon) x, \ \rho^-(x) =  2 \operatorname{Im}(\epsilon) \psi_\alpha(x) \},
\end{align*}
where $\operatorname{Re}(\epsilon)$ and $\operatorname{Im}(\epsilon)$ denote the real and imaginary parts of $\epsilon$ respectively. Since $\mathfrak{g}_{0, \epsilon}$ and $\mathfrak{g}_{\alpha, \epsilon}$ are invariant under $\sigma$ we have
\begin{align*}
&
\mathfrak{k} 
= 
\sum_{\epsilon \in U(1)_{\geq 0}} \mathfrak{k}_{0, \epsilon} 
+
\sum_{\alpha \in \Delta^+} 
\sum_{\epsilon \in U(1)} \mathfrak{k}_{\alpha, \epsilon},
\\
&
\mathfrak{m} 
= 
\sum_{\epsilon \in U(1)_{\geq 0}} \mathfrak{m}_{0, \epsilon} 
+
\sum_{\alpha \in \Delta^+} 
\sum_{\epsilon \in U(1)} \mathfrak{m}_{\alpha, \epsilon},
\end{align*}
\begin{equation*}
\begin{array}{lcl}
\mathfrak{k}_{0, \epsilon}
=
\mathfrak{g}_{0, \epsilon} \cap \mathfrak{k},
& &
\mathfrak{k}_{\alpha, \epsilon}
=
\mathfrak{g}_{\alpha, \epsilon} \cap \mathfrak{k} ,
\medskip
\\
\mathfrak{m}_{0, \epsilon}
=
\mathfrak{g}_{0, \epsilon} \cap \mathfrak{m},
& &
\mathfrak{m}_{\alpha, \epsilon}
=
\mathfrak{g}_{\alpha, \epsilon} \cap \mathfrak{m}.
\end{array}
\end{equation*}
Since $\mathfrak{g}_{\alpha, \epsilon}$ is invariant under $\psi_\alpha$ we have a linear isometry $\psi_\alpha: \mathfrak{m}_{\alpha, \epsilon} \rightarrow \mathfrak{k}_{\alpha, \epsilon}$. Set 
\begin{equation*}
m(\alpha, \epsilon)
:=
\dim \mathfrak{k}_{\alpha, \epsilon}
=
\dim \mathfrak{m}_{\alpha, \epsilon}.
\end{equation*}
Then similarly we can take bases $\{x^{\alpha, \epsilon}_i\}_{i = 1}^{m(\alpha,\epsilon)}$ of $\mathfrak{k}_{\alpha, \epsilon}$ and $\{y^{\alpha, \epsilon}_i\}_{i = 1}^{m(\alpha, \epsilon)}$ of $\mathfrak{m}_{\alpha, \epsilon}$ satisfying
\begin{equation}\label{basis-relation}
[\eta, x^{\alpha, \epsilon}_i] = - \langle \alpha, \eta \rangle y^{\alpha, \epsilon}_i
\quad \text{and} \quad
[\eta, y^{\alpha, \epsilon}_i] =  \langle \alpha, \eta \rangle x^{\alpha, \epsilon}_i
\end{equation}
for any $\eta \in \mathfrak{t}$. 

We now take $w \in \mathfrak{t}$, set $a := \exp w$ and consider the orbit $N := H \cdot aK$ through $aK$. Denote by $L_a$ the isometry of $M$ defined by $L_a(bK) := (ab)K$. Identifying $T_{eK} M$ with $\mathfrak{m}$ we can describe the tangent space and the normal space of $N$ as follows (\cite[p.\ 12]{Ohno21}):
\begin{align}
\label{tangent2}
T_{aK}N
&=
dL_{a}( \ \ 
\sum_{\substack{\epsilon  \in U(1)_{\geq 0} \\ \epsilon \neq 1}} \mathfrak{m}_{0, \epsilon} 
+
\sum_{\alpha \in \Delta^+}
\sum_{\substack{\epsilon \in U(1) \\ \langle \alpha, w  \rangle + \frac{1}{2} \arg \epsilon \notin \pi\mathbb{Z}}}
\mathfrak{m}_{\alpha, \epsilon}
\ \  ),\\
\label{normal2}
T^\perp_{aK}N
&=
dL_{a}(\ 
\qquad \mathfrak{t}  \qquad 
+ \quad 
\sum_{\alpha \in \Delta^+}
\sum_{\substack{\epsilon \in U(1)\\ \langle \alpha, w  \rangle + \frac{1}{2} \arg \epsilon \in  \pi\mathbb{Z}}}\mathfrak{m}_{\alpha, \epsilon}
\ \  ).
\end{align}
Moreover the decomposition \eqref{tangent2} is just the eigenspace decomposition of the family shape operators $\{A^N_{dL_a(\xi)}\}_{\xi \in \mathfrak{t}}$. In fact (\cite[p.\ 17]{Ohno21}):
\begin{align*}
dL_a (\mathfrak{m}_{0, \epsilon}): &\ \ \text{the eigenspace associated with the eigenvalue $0$},
\\
dL_a(\mathfrak{m}_{\alpha, \epsilon}): &\ \ \text{the eigenspace associated with}
\\ & \ \   
\text{the eigenvalue $\textstyle - \langle \alpha, \xi\rangle \cot ( \langle\alpha, w \rangle + \frac{1}{2} \arg \epsilon)$}
\end{align*}
for each $\xi \in \mathfrak{t}$. If $\sigma$ and $\tau$ commute then $\epsilon = \pm 1$ and thus we get (\cite[Theorem 5.3]{GT07}):
\begin{align}
\label{tangent3:commute}
T_{aK}N
&=
dL_{a}(\ \ 
\mathfrak{m}_{0} \cap \mathfrak{h} 
+
\sum_{\substack{\alpha \in \Delta^+ \\ \langle \alpha, w  \rangle \notin \pi\mathbb{Z}}}
\mathfrak{m}_{\alpha} \cap \mathfrak{p}
+
\sum_{\substack{\alpha \in \Delta^+ \\ \langle \alpha, w  \rangle + \pi/2  \notin  \pi\mathbb{Z}}}
\mathfrak{m}_{\alpha} \cap \mathfrak{h}
\ \  ),\\
\label{normal3:commute}
T^\perp_{aK}N
&=
dL_{a}(\ \ \ 
\quad \mathfrak{t} \ \quad 
+
\sum_{\substack{\alpha \in \Delta^+ \\ \langle \alpha, w  \rangle \in \pi\mathbb{Z}}}
\mathfrak{m}_{\alpha} \cap \mathfrak{p}
+
\sum_{\substack{\alpha \in \Delta^+ \\ \langle \alpha, w  \rangle + \pi/2  \in \pi\mathbb{Z}}}
\mathfrak{m}_{\alpha} \cap \mathfrak{h}
\ \  ),
\end{align}
\begin{align*}
dL_a(\mathfrak{m}_{0} \cap \mathfrak{h}): & \ \ \text{the eigenspace associated with the eigenvalue $0$},
\\
dL_a(\mathfrak{m}_\alpha \cap \mathfrak{p}) :& \  \ \text{the eigenspace associated with the eigenvalue $ - \langle \alpha, \xi\rangle \cot \langle\alpha, w \rangle$},
\\
dL_a(\mathfrak{m}_\alpha \cap \mathfrak{h}): & \   \ \text{the eigenspace associated with the eigenvalue $ \langle \alpha, \xi\rangle \tan \langle\alpha, w \rangle$}.
\end{align*}
In particular if $\sigma = \tau$ then we have (\cite[p.\ 122]{Tas85})
\begin{align}
\label{tangent4}
T_{aK}N
&=
dL_{a}(\ \ 
\sum_{\substack{\alpha \in \Delta^+ \\ \langle \alpha, w  \rangle \notin \pi\mathbb{Z}}}
\mathfrak{m}_{\alpha} \ \  ),
\\
\label{normal4}
T^\perp_{aK}N
&=
dL_{a}(\ \ 
\quad \mathfrak{t} \quad 
+
\sum_{\substack{\alpha \in \Delta^+ \\ \langle \alpha, w  \rangle \in \pi\mathbb{Z}}}
\mathfrak{m}_{\alpha}\ \  ),
\end{align}
\begin{equation*}
dL_a(\mathfrak{m}_\alpha) : \  \text{the eigenspace associated with the eigenvalue $ - \langle \alpha, \xi\rangle \cot \langle\alpha, w \rangle$}.
\end{equation*}

\section{The curvature-adapted property}\label{cap}

In this section we formulate the curvature-adapted property of orbits of Hermann actions. 

First we recall the concept of curvature-adapted submanifolds (\cite{BV92}). Let $N$ be a submanifold of a Riemannian manifold $M$. For each $v \in T^\perp_{p} N$ at each $p \in N$ the Jacobi operator $R_v$ is a symmetric linear transformation of $T_{p} M$ defined by 
\begin{equation*}
R_v(x) = R^M(x, v) v \qquad \text{for} \ x \in T_{p} M,
\end{equation*}
where $R^M$ denotes the curvature tensor of $M$. Then $N$ is called \emph{curvature-adapted} if for every $v \in T^\perp_p N$ at each $p \in N$ the Jacobi operator $R_v$ leaves $T_p N$ invariant and the restriction $R_v|_{T_p N}$ commutes with the shape operator $A_v^N$ of $N$. 

We now make the following definition:
\begin{defi}\label{ccad}
Let $M = G/K$ be a symmetric space of compact type and $N$ a submanifold of $M$. For an integer $c$ satisfying $1 \leq c \leq \operatorname{codim} N$ we say that $N$ is \emph{$c$-curvature-adapted} if for each $aK \in N$ the following two conditions are satisfied: 
\begin{enumerate}
\item for every $v \in T^\perp_{aK} N$ the Jacobi operator $R_v$ leaves $T_{aK} N$ invariant,

\item for each $v \in T^\perp_{aK} N$ there exists a $c$-dimensional abelian subspace  $\mathfrak{t}$ in $\mathfrak{m}$ satisfying $v \in dL_a(\mathfrak{t}) \subset T^\perp_{aK} N $ such that the union
\begin{equation*}
\{R_{dL_a(\xi)}|_{T_{aK} N}\}_{\xi \in \mathfrak{t}} \cup \{A^N_{dL_a(\xi)}\}_{\xi \in \mathfrak{t}}
\end{equation*}
is a commuting family of endomorphisms of $T_{aK} N$. 

\end{enumerate}
\end{defi}

Note that if $c = 1$ then $1$-curvature-adapted submanifolds are just curvature-adapted submanifolds in the original sense. 
Note also that if $aK = eK$ then $R_v$ is identified with $- \operatorname{ad}(v)^2$ since $M$ is a symmetric space. Typical examples of $c$-curvature-adapted submanifolds are given by the following proposition, which was essentially shown by Goertsches and Thorbergsson \cite[Corollaries 3.3 and 3.4]{GT07}:   
\begin{prop}[Goertsches-Thorbergsson \cite{GT07}]\label{GT}
All orbits of Hermann actions of cohomogeneity $c$ are $c$-curvature-adapted submanifolds. 
\end{prop}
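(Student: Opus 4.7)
The plan is to reduce to a convenient base point on the section via $H$-equivariance and then to read off both conditions of Definition \ref{ccad} directly from the root space decompositions developed in Section \ref{Hermann}.

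Since $H$ acts by isometries on $M$ and the section $\Sigma = \pi(\exp \mathfrak{t})$ meets every $H$-orbit, it suffices to verify (i) and (ii) at a point of the form $aK = (\exp w)K$ with $w \in \mathfrak{t}$. Given $v \in T^\perp_{aK} N$, I would first produce a $c$-dimensional abelian subspace $\mathfrak{t}' \subset \mathfrak{m}$ with $v \in dL_a(\mathfrak{t}') \subset T^\perp_{aK} N$. This is the hyperpolar slice statement: the normal space at any point of $N$ is the union of the tangent spaces at $aK$ of the sections through $aK$, and since all sections are $H$-congruent to $\Sigma$, each such tangent space has the form $dL_a(\operatorname{Ad}(k) \mathfrak{t})$ for some $k \in K$, which is a $c$-dimensional abelian subspace of $\mathfrak{m}$.

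With $\mathfrak{t}'$ in hand, I would apply the machinery of Section \ref{Hermann} to the section $\mathfrak{t}'$ in place of $\mathfrak{t}$, using the appropriately conjugated involution. The formulas \eqref{tangent2}--\eqref{normal2} together with the eigenvalue description displayed after \eqref{normal2} show that the family $\{A^N_{dL_a(\xi)}\}_{\xi \in \mathfrak{t}'}$ is simultaneously diagonalized by the decomposition of $T_{aK} N$ into the pieces $dL_a(\mathfrak{m}_{0, \epsilon})$ and $dL_a(\mathfrak{m}_{\alpha, \epsilon})$. On the other hand, because $M$ is symmetric, the Jacobi operator pulls back via $dL_a$ to $-\operatorname{ad}(\xi)^2$, and each summand $\mathfrak{m}_{\alpha, \epsilon}$ lies in $\mathfrak{g}_\alpha$ and is therefore an eigenspace of $\operatorname{ad}(\xi)^2$ with eigenvalue $-\langle \alpha, \xi \rangle^2$. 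Hence $R_{dL_a(\xi)}$ preserves every piece of \eqref{tangent2}, in particular preserves $T_{aK} N$ (verifying (i)), and is simultaneously diagonalized with the shape operators by the same decomposition (verifying (ii)).

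The main obstacle I anticipate is the slice statement at singular points. At a regular $aK$ one has $T^\perp_{aK} N = dL_a(\mathfrak{t})$ and the claim is automatic, but at a singular $aK$ the normal space \eqref{normal2} contains additional root-space summands, and one has to verify that every vector in those extra pieces is genuinely tangent to a $c$-dimensional flat section through $aK$. This can be established by invoking the general polar slice theorem for hyperpolar actions on symmetric spaces of compact type, or alternatively extracted from the direct bracket computations of Goertsches--Thorbergsson \cite[Corollaries 3.3 and 3.4]{GT07}, who treat exactly this structure.
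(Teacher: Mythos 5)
Your proposal is correct in substance, but it reaches the conclusion by a slightly longer route than the paper. The paper's proof performs the translation $L_a^{-1}$ at the outset: writing $L_a^{-1}N$ as the orbit of the conjugated symmetric subgroup $a^{-1}Ha$ through $eK$, the normal space at $eK$ is precisely $\mathfrak{m} \cap \mathfrak{p}'$ with $\mathfrak{p}' = \operatorname{Ad}(a^{-1})\mathfrak{p}$, and then condition (ii) of Definition \ref{ccad} is fulfilled by any maximal abelian subspace of $\mathfrak{m} \cap \mathfrak{p}'$ containing the given normal vector $v$. This sidesteps the normal-space slice statement at a general base point that you flag as the main obstacle; at $eK$ the statement is just the elementary fact that every vector of $\mathfrak{m}\cap\mathfrak{p}'$ lies in a Cartan subspace. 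Your version instead works directly at $aK = (\exp w)K$, invokes the formula $T^\perp_{aK}N = dL_a\bigl(\bigcup_{b\in K\cap a^{-1}Ha}\operatorname{Ad}(b)\mathfrak{t}\bigr)$ (the paper uses exactly this, as \eqref{eq:normalspace}, in the proof of Lemma \ref{lem2}), and then appeals to ``the machinery of Section \ref{Hermann} with the appropriately conjugated involution.'' That last step deserves one more sentence: the decompositions \eqref{tangent2}--\eqref{normal2} are tied to a choice of $\mathfrak{t}\subset\mathfrak{m}\cap\mathfrak{p}$ and a base point $(\exp w)K$ with $w\in\mathfrak{t}$, and $\mathfrak{t}'=\operatorname{Ad}(b)\mathfrak{t}$ need not sit in $\mathfrak{m}\cap\mathfrak{p}$, nor need $aK$ be $\exp\mathfrak{t}'$-expressible. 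The clean way to finish along your lines is the conjugation identity $A^N_{dL_a(\operatorname{Ad}(b)\xi)}= (dL_a\,dL_b\,dL_a^{-1})\circ A^N_{dL_a(\xi)}\circ(dL_a\,dL_b\,dL_a^{-1})^{-1}$ and its analogue for $R$, so that the family indexed by $\xi'\in\mathfrak{t}'$ is an isometric conjugate of the family indexed by $\xi\in\mathfrak{t}$, whose commutativity is then read off from \eqref{tangent2}. Either route is valid; the paper's translation trick simply absorbs both your slice statement and your conjugation step into a single harmless normalization.
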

\begin{proof}
Let $N$ be an orbit of a Hermann action $H \curvearrowright M$ of cohomogeneity $c$. Take $aK \in N$. Since $L_a^{-1} N = (a^{-1}Ka) \cdot eK$ we can assume $a K = eK$ without loss of generality. Take $v \in T^\perp_{eK} N$. Choose a maximal abelian subspace $\mathfrak{t}$ in $\mathfrak{m} \cap \mathfrak{p} = T^\perp_{eK} N$ containing $v$. Since $\pi(\exp \mathfrak{t})$ is a section of the Hermann action we have $\dim \mathfrak{t} = c$. Then it follows from the decomposition \eqref{tangent2} that the Jacobi operator $R_v = - \operatorname{ad}(v)^2$ leaves $T_{eK} N$ invariant and 
\begin{equation*}
R_v \circ R_w =R_w \circ R_v
, \quad
R_v|_{T_{eK} N} \circ A^N_w = A^N_w \circ R_v|_{T_{eK} N} 
, \quad
A^N_v \circ A^N_w = A^N_w \circ A^N_v
\end{equation*}
hold for any $v, w \in \mathfrak{t}$. Thus $N$ is a $c$-curvature-adapted submanifolds of $M$.
\end{proof}

\begin{rem}
We do not know whether all orbits of hyperpolar actions of cohomogeneity $c$ are $c$-curvature-adapted submanifolds or not. We know that any indecomposable hyperpolar action of cohomogeneity at least two on $M$ is orbit equivalent to a Hermann action (\cite{Kol17}). We also know that any cohomogeneity one action on $M$ is automatically hyperpolar (\cite[Corollary 2.13]{HPTT95}). There exist examples of cohomogeneity one actions on the standard sphere which are different from Hermann actions (\cite{HPTT95}, \cite{Kol02}). Since the standard sphere is of constant sectional curvature, all orbits of such cohomogeneity one actions are $1$-curvature-adapted submanifolds.
\end{rem}

Let $N$ be a $c$-curvature-adapted submanifold of a symmetric space $M = G/K$ of compact type. 
Take $aK \in N$. Choose and fix an arbitrary $c$-dimensional abelian subspace  $\mathfrak{t}$ in $\mathfrak{m}$ satisfying the condition (ii) of Definition \ref{ccad} for some $v \in T^\perp_{aK} N$. Consider the root space decomposition
\begin{equation*}
\mathfrak{k} = \mathfrak{k}_0 + \sum_{\alpha \in \Delta^+} \mathfrak{k}_\alpha
, \qquad
\mathfrak{m} = \mathfrak{m}_0 + \sum_{\alpha \in \Delta^+} \mathfrak{m}_\alpha,
\end{equation*}
\begin{align*}
\mathfrak{k}_0
&=
\{x \in \mathfrak{k} \mid \operatorname{ad}(\eta) x= 0 \ \ \text{for all } \eta \in \mathfrak{t} \},
\\
\mathfrak{k}_\alpha
&=
\{x \in \mathfrak{k} \mid 
 \operatorname{ad}(\eta)^2 x= - \langle \alpha, \eta \rangle^2 x
\ \ \text{for all } \eta \in \mathfrak{t}
\},
\\
\mathfrak{m}_0
&=
\{y \in \mathfrak{m} \mid \operatorname{ad}(\eta) y= 0
\ \ \text{for all } \eta \in \mathfrak{t}
\},
\\
\mathfrak{m}_\alpha
&=
\{y \in \mathfrak{m} \mid 
 \operatorname{ad}(\eta)^2y = - \langle \alpha, \eta \rangle^2 y
\ \ \text{for all } \eta \in \mathfrak{t}
\}.
\end{align*}
These are just the eigenspace decompositions of the commuting operators $\{\operatorname{ad}(\xi)^2 \}_{\xi \in \mathfrak{t}}$. On the other hand, the following lemma concerns the eigenspace decomposition of the commuting operators $\{A^N_{dL_a(\xi)}\}_{\xi \in \mathfrak{t}}$.
\begin{lem}\label{commute3}
There exists a unique finite subset $\Lambda$ of $\mathfrak{t}$ such that 
\begin{equation*}
T_{aK} N = \sum_{\lambda \in \Lambda} S_\lambda,
\end{equation*}
where $S_\lambda$ is a nonzero subspace of $T_{aK} N$ defined by
\begin{equation*}
S_\lambda
=
\{x \in T_{aK} N \mid A^N_{dL_a(\eta)}(x) = \langle \lambda, \eta \rangle x \ \ \textup{for all} \ \eta \in \mathfrak{t} \}.
\end{equation*}
\end{lem}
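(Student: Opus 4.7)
The plan is to obtain the decomposition as a simultaneous eigenspace decomposition of the commuting family of shape operators $\{A^N_{dL_a(\xi)}\}_{\xi \in \mathfrak{t}}$, and then to read off the linear dependence of the eigenvalues on $\xi$ via the inner product on $\mathfrak{t}$.

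First I would note that each shape operator $A^N_{dL_a(\xi)}$ is a symmetric endomorphism of the finite-dimensional Euclidean space $T_{aK} N$, and that by condition (ii) of Definition \ref{ccad} the operators $\{A^N_{dL_a(\xi)}\}_{\xi \in \mathfrak{t}}$ pairwise commute. The standard spectral theorem for commuting families of symmetric endomorphisms then yields a simultaneous orthogonal eigenspace decomposition
\begin{equation*}
T_{aK} N = \bigoplus_{\ell \in F} E_\ell,
\end{equation*}
where $F$ is some finite set of linear functionals on $\mathfrak{t}$ and
\begin{equation*}
E_\ell = \{x \in T_{aK} N \mid A^N_{dL_a(\eta)}(x) = \ell(\eta) x \ \ \text{for all } \eta \in \mathfrak{t}\}.
\end{equation*}
Here linearity of $\ell$ in $\eta$ comes from the fact that $\xi \mapsto A^N_{dL_a(\xi)}$ is itself linear, so the assignment $\eta \mapsto A^N_{dL_a(\eta)}|_{E_\ell}$ equals multiplication by a scalar depending linearly on $\eta$ on each common eigenspace.

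Next I would identify each $\ell \in F$ with the unique vector $\lambda \in \mathfrak{t}$ for which $\ell(\eta) = \langle \lambda, \eta \rangle$ for all $\eta \in \mathfrak{t}$, using the nondegeneracy of $\langle \cdot, \cdot \rangle$ restricted to the finite-dimensional subspace $\mathfrak{t}$. Letting $\Lambda \subset \mathfrak{t}$ be the (finite) set of such $\lambda$, the eigenspace $E_\ell$ is precisely the $S_\lambda$ in the statement, and the decomposition takes the required form.

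Finally, uniqueness of $\Lambda$ follows because the $S_\lambda$ for distinct $\lambda \in \mathfrak{t}$ are linearly independent: if $x \in S_\lambda \cap S_{\lambda'}$ is nonzero then $\langle \lambda - \lambda', \eta \rangle x = 0$ for all $\eta \in \mathfrak{t}$, forcing $\lambda = \lambda'$ by nondegeneracy of the inner product on $\mathfrak{t}$. Thus $\Lambda$ is characterized as the set of those $\lambda \in \mathfrak{t}$ for which $S_\lambda \neq \{0\}$, which depends only on $N$, $a$, and $\mathfrak{t}$. I do not expect any serious obstacle here; the only subtlety is verifying that condition (ii) of Definition \ref{ccad} really gives a commuting family (not merely that each individual shape operator commutes with a Jacobi operator), but this is exactly what the condition states.
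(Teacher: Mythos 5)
Your proof is correct and follows essentially the same route as the paper: both rely on the simultaneous eigenspace decomposition of the commuting family $\{A^N_{dL_a(\xi)}\}_{\xi\in\mathfrak{t}}$ of symmetric endomorphisms, observe linearity of the eigenvalue in $\xi$, and then identify the resulting linear functionals with elements of $\mathfrak{t}$ via the inner product (the paper merely makes the spectral-theorem step explicit by fixing a basis of $\mathfrak{t}$ and intersecting eigenspaces).
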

\begin{proof}
By left translation we can assume $a K = eK$ without loss of generality. It is easy to see that such a subset $\Lambda$ is unique. To see the existence we take a basis $\{\eta_i\}_{i = 1}^c$ of $\mathfrak{t}$. We denote by $\{\lambda(\eta_i)_1, \cdots , \lambda(\eta_i)_{m(i)}\}$ the set of all distinct eigenvalues of the shape operator $A^N_{\eta_i}$ and by $W_{\lambda(\eta_i)_1}, \cdots , W_{\lambda(\eta_i)_{m(i)}}$ their eigenspaces. 
Since $\{A^N_{\eta_i}\}_{i = 1}^c$ is a commuting family we have the decomposition
\begin{equation*}
T_{eK} N = \sum_{j_1 = 1}^{m(1)} \cdots \sum_{j_c = 1}^{m(c)} \ 
(W_{\lambda(\eta_1)_{j_1}} \cap \cdots \cap W_{\lambda(\eta_c)_{j_c}}).
\end{equation*}
Define a linear functional $\lambda_{j_1 \cdots j_c} : \mathfrak{t} \rightarrow \mathbb{R}$ by
\begin{equation*}
\lambda_{j_1 \cdots j_c}(a_1 \eta_1 + \cdots + a_c \eta_c) 
= 
a_1\lambda(\eta_1)_{j_1} + \cdots + a_c \lambda(\eta_c)_{j_c}
, \quad
\text{where} \ a_1, ... , a_c \in \mathbb{R}.
\end{equation*}
Then for each $\eta = a_1 \eta_1 + \cdots + a_c \eta_c \in \mathfrak{t}$ and $x \in W_{\lambda(\eta_1)_{j_1}} \cap \cdots \cap W_{\lambda(\eta_c)_{j_c}}$ we have 
\begin{align*}
A^N_\eta(x)
= 
(a_1 A^N_{\eta_1} + \cdots + a_c A^N_{\eta_c})(x)
= 
\lambda_{j_1 \cdots j_c}(\eta) x.
\end{align*}
Set 
$\Lambda := \{\lambda_{j_1 \cdots j_c}\}_{1 \leq j_1 \leq m(1) ,\  \cdots, \  1 \leq  j_c \leq m(c)}$ 
and
$S_{\lambda_{j_1 \cdots j_c}} := W_{\lambda(\eta_1)_{j_1}} \cap \cdots \cap W_{\lambda(\eta_c)_{j_c}}$.
Identifying $\mathfrak{t}$ with the dual space $\mathfrak{t}^*$ we obtain the desired subset $\Lambda \subset \mathfrak{t}$ and the decomposition 
$T_{eK} N = \sum_{\lambda \in \Lambda} S_\lambda$. This proves the lemma.
\end{proof}

The following proposition concerns the eigenspace decomposition of the union of commuting operators $\{R_{dL_a(\xi)}\}_{\xi \in \mathfrak{t}} \cup \{A^N_{dL_a(\xi)}\}_{\xi \in \mathfrak{t}}$. 
\begin{prop}\label{cca}
Let $\Lambda$ be as in Lemma \textup{\ref{commute3}}. Then the tangent space and the normal space of $N$ are decomposed as follows:
\begin{align}
\label{cc1}
T_{aK} N 
&=
\sum_{\lambda \in \Lambda_0} (dL_a (\mathfrak{m}_{0}) \cap S_\lambda) 
+ 
\sum_{\alpha \in \Delta^+} \sum_{\lambda \in \Lambda_\alpha} (dL_a(\mathfrak{m}_{\alpha}) \cap S_\lambda),
\\
\label{cc2}
T_{aK}^\perp N
&=
dL_a(\mathfrak{m}_{0}) \cap T^\perp_{aK} N + \sum_{\alpha \in \Delta^+} (dL_a(\mathfrak{m}_{\alpha}) \cap T^\perp_{aK} N),
\end{align}
where $\Lambda_0 := \{\lambda \in \Lambda \mid dL_a(\mathfrak{m}_0) \cap S_\lambda \neq \{0\}\}$ and
$\Lambda_\alpha :=\{\lambda \in \Lambda \mid dL_a(\mathfrak{m}_\alpha) \cap S_\lambda \neq \{0\}\}$.
\end{prop}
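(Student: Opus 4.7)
The plan is to reduce the entire proposition to the elementary linear-algebra fact that if $V$ is a finite-dimensional inner product space and $\mathcal{F}$ is a commuting family of symmetric operators on $V$ with joint eigenspace decomposition $V = \bigoplus_i V_i$, then any $\mathcal{F}$-invariant subspace $W \subset V$ satisfies $W = \bigoplus_i (W \cap V_i)$. This is immediate: each operator in $\mathcal{F}$ restricts to a diagonalizable operator on $W$, the restrictions still commute, and their joint eigenspaces in $W$ are exactly the intersections $W \cap V_i$.

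To prove \eqref{cc2} I would apply this fact with $V = T_{aK} M$ and $\mathcal{F} = \{R_{dL_a(\xi)}\}_{\xi \in \mathfrak{t}}$. Since $L_a$ is an isometry of the symmetric space $M$, the map $dL_a$ intertwines $R_\xi = -\operatorname{ad}(\xi)^2$ on $T_{eK} M \cong \mathfrak{m}$ with $R_{dL_a(\xi)}$ on $T_{aK} M$, so the joint eigenspaces of $\mathcal{F}$ are precisely $dL_a(\mathfrak{m}_0)$ (eigenvalue $0$) and the $dL_a(\mathfrak{m}_\alpha)$ (eigenvalue $\langle\alpha,\xi\rangle^2$) described just before Lemma \ref{commute3}. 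By condition (i) of Definition \ref{ccad}, $T_{aK} N$ is $\mathcal{F}$-invariant, and since each $R_{dL_a(\xi)}$ is symmetric, the orthogonal complement $T^\perp_{aK} N$ is $\mathcal{F}$-invariant as well. Applying the fact to $W = T^\perp_{aK} N$ is exactly \eqref{cc2}.

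For \eqref{cc1} I would fix $\lambda \in \Lambda$ and consider $S_\lambda \subset T_{aK} N$. By condition (ii) of Definition \ref{ccad}, the family $\{A^N_{dL_a(\xi)}\}_{\xi \in \mathfrak{t}} \cup \{R_{dL_a(\xi)}|_{T_{aK} N}\}_{\xi \in \mathfrak{t}}$ is commuting, so each $R_{dL_a(\xi)}|_{T_{aK} N}$ preserves the joint eigenspace $S_\lambda$ of $\{A^N_{dL_a(\xi)}\}_{\xi \in \mathfrak{t}}$. Applying the same linear-algebra fact with $V = T_{aK} N$, $\mathcal{F} = \{R_{dL_a(\xi)}|_{T_{aK}N}\}_{\xi \in \mathfrak{t}}$ (whose joint eigenspaces in $T_{aK} N$ are $dL_a(\mathfrak{m}_0) \cap T_{aK} N$ and $dL_a(\mathfrak{m}_\alpha) \cap T_{aK} N$), and $W = S_\lambda$ gives
\begin{equation*}
S_\lambda = (dL_a(\mathfrak{m}_0) \cap S_\lambda) + \sum_{\alpha \in \Delta^+} (dL_a(\mathfrak{m}_\alpha) \cap S_\lambda),
\end{equation*}
where I used $S_\lambda \subset T_{aK} N$ to drop superfluous intersections. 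Summing over $\lambda \in \Lambda$ and absorbing the vanishing summands into the definitions of $\Lambda_0$ and $\Lambda_\alpha$ produces \eqref{cc1}. There is no serious obstacle; the entire content is the compatibility of the two commuting families guaranteed by Definition \ref{ccad} together with the identification of the joint eigenspaces of $\{-\operatorname{ad}(\xi)^2\}_{\xi \in \mathfrak{t}}$ with the root-space pieces $\mathfrak{m}_0$ and $\mathfrak{m}_\alpha$.
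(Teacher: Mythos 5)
Your argument is correct, and it relies on exactly the same ingredients the paper uses: the $\{R_\xi\}$-eigenspace decomposition of $\mathfrak{m}$, the $\{A^N_\xi\}$-eigenspace decomposition $T_{aK}N = \sum_\lambda S_\lambda$ from Lemma \ref{commute3}, the commuting-family hypothesis in Definition \ref{ccad} (ii), and the standard fact that an invariant subspace of a commuting family of symmetric operators inherits the joint eigenspace decomposition. The one structural difference is the order in which you apply the two decompositions to $T_{aK}N$: the paper first splits $T_{eK}N$ into the $\{R_\xi\}$-eigenspaces $\mathfrak{m}_0 \cap T_{eK}N$ and $\mathfrak{m}_\alpha \cap T_{eK}N$ and then refines each by the shape operators (rerunning the argument of Lemma \ref{commute3} on each piece), whereas you start from the $\{A^N_\xi\}$-eigenspaces $S_\lambda$ supplied by Lemma \ref{commute3} and refine each by $\{R_{dL_a(\xi)}|_{T_{aK}N}\}$. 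The two orderings produce the same bidegree decomposition, and your version has the small advantage of using Lemma \ref{commute3} as a black box rather than repeating its proof; the paper also normalizes to $aK = eK$ by left translation while you keep the $dL_a$ conjugation explicit, but that is a cosmetic choice. No gap.
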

\begin{proof}
By left translation we can assume $aK = eK$ without loss of generality. Since the tangent space is invariant under $\{R_\xi\}_{\xi \in \mathfrak{t}}$ the normal space is also invariant under $\{R_\xi\}_{\xi \in \mathfrak{t}}$ and we have the decompositions
\begin{align*}
T_{eK} N 
&=
\mathfrak{m}_{0} \cap T_{eK} N
+ 
\sum_{\alpha \in \Delta^+} 
(\mathfrak{m}_{\alpha} \cap  T_{eK} N ),
\\
T_{eK}^\perp N
&=
\mathfrak{m}_{0} \cap T^\perp_{eK} N + \sum_{\alpha \in \Delta^+} (\mathfrak{m}_{\alpha} \cap T^\perp_{eK} N).
\end{align*}
By the curvature-adapted property, $\mathfrak{m}_0 \cap T_{eK} N$ and $\mathfrak{m}_\alpha \cap T_{eK} N$ are invariant under $\{A^N_\xi\}_{\xi \in \mathfrak{t}}$. Thus by similar arguments as in the proof of Lemma \ref{commute3} we obtain the eigenspace decompositions
\begin{equation*}
\mathfrak{m}_0 \cap T_{eK} N
=
\sum_{\lambda \in \Lambda_0}
(\mathfrak{m}_0 \cap S_\lambda)
, \quad
\mathfrak{m}_\alpha \cap T_{eK} N
=\sum_{\lambda \in \Lambda_\alpha}
(\mathfrak{m}_\alpha \cap S_\lambda)
\end{equation*}
and the assertion follows.
\end{proof}

\begin{example}\label{formulation}
Let $H \curvearrowright M$ be a Hermann action of cohomogeneity $c$. Choose a maximal abelian subspace $\mathfrak{t}$ in $\mathfrak{m} \cap \mathfrak{p}$. Take $w \in \mathfrak{t}$, set $a := \exp w$ and consider the orbit $N = H \cdot a K$ through $aK$. From the decomposition \eqref{tangent2} it is clear that $\mathfrak{t}$ is a $c$-dimensional abelian subspace in $\mathfrak{m}$ satisfying the condition (ii) of Definition \ref{ccad} for any $v \in \{dL_a(\xi)\}_{\xi \in \mathfrak{t}}$. We set 
$U(1)_0 := \{\epsilon \in U(1)_{\geq 0} \mid \mathfrak{m}_{0, \epsilon} \neq \{0\}\}$ and
\begin{equation*}
U(1)_0^\top := \{\epsilon \in U(1)_{0} \mid   \epsilon \neq 1\}.
\end{equation*}
We also set $U(1)_\alpha := \{\epsilon \in U(1) \mid \mathfrak{m}_{\alpha, \epsilon} \neq \{0\}\}$ and
\begin{align*}
&
U(1)^\top_{\alpha} 
:= \{\epsilon \in U(1)_\alpha \mid \langle \alpha , w \rangle + \frac{1}{2} \arg \epsilon \notin \pi \mathbb{Z}
\},
\\
&
U(1)^\perp_{\alpha} 
:= \{\epsilon \in U(1)_\alpha \mid  \langle \alpha , w \rangle + \frac{1}{2} \arg \epsilon \in \pi \mathbb{Z}
\}.
\end{align*}
Then we can rewrite the decompositions \eqref{tangent2} and \eqref{normal2} as follows:
\begin{align}
\label{tangent3}
T_{aK}N
&=
\sum_{\epsilon  \in U(1)^\top_{0}} dL_{a}(\mathfrak{m}_{0, \epsilon} )
+
\sum_{\alpha \in \Delta^+}
\sum_{\epsilon \in U(1)_\alpha^\top }
dL_{a}(\mathfrak{m}_{\alpha, \epsilon})
,\\
\label{normal3}
T^\perp_{aK}N
&=
\ \qquad dL_{a}(\mathfrak{t})  \qquad 
+ 
\sum_{\alpha \in \Delta^+}
\sum_{\epsilon \in U(1)^\perp_\alpha} dL_{a}(\mathfrak{m}_{\alpha, \epsilon}).
\end{align}
For each $\alpha \in \Delta^+$ and $\epsilon \in U(1)_\alpha^\top$ we set
\begin{equation*}
\lambda(\alpha, \epsilon)
:=
- \cot \left(\langle \alpha, w \rangle + \frac{1}{2} \arg \epsilon \right)  \alpha 
\ \in \mathfrak{t}.
\end{equation*}
Then $\Lambda_0$ and $\Lambda_\alpha$ in Proposition \ref{cca} are 
\begin{align*}
\Lambda_0 
\left\{
\begin{array}{lcl}
= &\{0\} & (\text{if \ \ $U(1)^\top_0 \neq \emptyset$})
\\
= & \emptyset & (\text{if \ \ $U(1)^\top_0 = \emptyset$})
\end{array}
\right.
, \qquad 
\Lambda_\alpha 
=
\{\lambda(\alpha, \epsilon) \mid \epsilon \in U(1)_\alpha^\top\}.
\end{align*}
Note that the correspondence $U(1)_\alpha^\top \ni \epsilon \mapsto \lambda(\alpha, \epsilon) \in \Lambda_\alpha$ is one-to-one because 
$\cot x$ is strictly decreasing on $\mathbb{R}/\pi \mathbb{Z}$. Thus we have 
\begin{equation*}
\begin{array}{ccc}
{\displaystyle
\sum_{\substack{\epsilon \in U(1)^\top_{0}}}
dL_a(\mathfrak{m}_{0, \epsilon}) = dL_a(\mathfrak{m}_0) \cap S_0,}
&&
dL_a(\mathfrak{m}_{\alpha, \epsilon})
=
dL_a(\mathfrak{m}_{\alpha, \epsilon}) \cap S_{\lambda(\alpha, \epsilon)},
\medskip
\\
dL_a(\mathfrak{t}) = dL_a(\mathfrak{m}_0) \cap T^\perp_{aK} N,
&&
{\displaystyle
\sum_{\epsilon \in U(1)^\perp_\alpha}dL_a(\mathfrak{m}_{\alpha, \epsilon})
=
dL_a(\mathfrak{m}_\alpha ) \cap T^\perp_{aK} N }
\end{array}
\end{equation*}
and therefore the decompositions \eqref{tangent3} and \eqref{normal3} are expressed as
\begin{align}
\label{tangent4}
T_{aK} N 
&=
dL_a(\mathfrak{m}_0) \cap S_0
+
\sum_{\alpha \in \Delta^+} \sum_{\lambda \in \Lambda_\alpha}
(dL_a(\mathfrak{m}_\alpha) \cap S_\lambda),
\\
\label{normal4}
T^\perp_{aK}N
&=
dL_a(\mathfrak{m}_0) \cap T^\perp_{aK} N
+
\sum_{\alpha \in \Delta^+}
(dL_a(\mathfrak{m}_\alpha) \cap T^\perp_{aK}N).
\end{align}
\end{example}

\section{Principal curvatures via the parallel transport map}\label{pcvptm}
Let $M = G/K$ be a symmetric space of compact type and $\Phi_K: V_\mathfrak{g} \rightarrow M$ the parallel transport map. In \cite{Koi02} and \cite{M2} an explicit formula for the principal curvatures of the PF submanifold $\Phi_K^{-1}(N)$ of $V_\mathfrak{g}$ was given under the assumption that $N$ is a curvature-adapted submanifold of $M$. In this section we refine that formula to the case of $c$-curvature-adapted submanifolds so that orbits of Hermann actions can be applied. 

Let $N$ be a $c$-curvature-adapted submanifold of $M$. To consider the PF submanifold $\Phi_K^{-1}(N)$ of $V_\mathfrak{g}$ we can assume $eK \in N$ without loss of generality due to the equivariant property \eqref{equiv} of $\Phi_K$. Choose and fix an arbitrary $c$-dimensional abelian subspace  $\mathfrak{t}$ in $T^\perp_{eK} N$ satisfying the condition (ii) of Definition \ref{ccad} for some $v \in T^\perp_{eK} N$. Recall the decompositions given in Proposition \ref{cca}:
\begin{align}
\label{decomp11}
T_{eK} N 
&=
\sum_{\lambda \in \Lambda_0} (\mathfrak{m}_{0} \cap S_\lambda) 
+ 
\sum_{\alpha \in \Delta^+} \sum_{\lambda \in \Lambda_\alpha} (\mathfrak{m}_{\alpha} \cap S_\lambda),
\\
\label{decomp12}
T_{eK}^\perp N
&=
\mathfrak{m}_{0} \cap T^\perp_{eK} N + \sum_{\alpha \in \Delta^+} (\mathfrak{m}_{\alpha} \cap T^\perp_{eK} N).
\end{align}
Set
\begin{equation*}
\begin{array}{lll}
m(0,\lambda) :=\dim (\mathfrak{m}_0 \cap S_\lambda)
, &&
m(\alpha,\lambda) :=\dim (\mathfrak{m}_\alpha \cap S_\lambda),
\medskip
\\
m(0,\perp) :=\dim (\mathfrak{m}_0 \cap T^\perp_{eK} N)
, &&
m(\alpha,\perp) :=\dim (\mathfrak{m}_{\alpha} \cap T^\perp_{eK} N).
\end{array}
\end{equation*}
Take bases
\begin{equation*}
\begin{array}{lll}
\{y^{0, \lambda}_j\}_{j = 1}^{m(0, \lambda)} \ \text{of}\  \mathfrak{m}_0 \cap S_\lambda
, &&
 \{y^{\alpha, \lambda}_k\}_{k = 1}^{m(\alpha, \lambda)} \ \text{of}\  \mathfrak{m}_\alpha \cap S_\lambda,
\medskip
\\
\{y^{0, \perp}_l\}_{l = 1}^{m(0, \perp)} \ \text{of} \ \mathfrak{m}_0 \cap T^\perp_{eK} N
, &&
\{y^{\alpha, \perp}_r\}_{r = 1}^{m(\alpha, \perp)}\  \text{of} \  \mathfrak{m}_\alpha \cap T^\perp_{eK} N.
\end{array}
\end{equation*}
Then we obtain a basis
\begin{equation*}
\bigcup_{\lambda \in \Lambda_0}
\{y^{0, \lambda}_j\}_{j = 1}^{m(0, \lambda)}
\ \cup \ 
\{y^{0, \perp}_l\}_{l = 1}^{m(0, \perp)}
\end{equation*}
of $\mathfrak{m}_0$ and a basis
\begin{equation*}
\bigcup_{\lambda \in \Lambda_\alpha}
\{y^{\alpha, \lambda}_k\}_{k = 1}^{m(\alpha, \lambda)}
\ \cup \ 
\{y^{\alpha, \perp}_r\}_{r = 1}^{m(\alpha, \perp)}
\end{equation*}
of $\mathfrak{m}_\alpha$. Via an isometry $\psi_\alpha: \mathfrak{m}_\alpha \rightarrow \mathfrak{k}_\alpha$ defined by \eqref{psi} we take a basis
\begin{equation*}
\bigcup_{\lambda \in \Lambda_\alpha}
\{x^{\alpha, \lambda}_k\}_{k = 1}^{m(\alpha, \lambda)}
\ \cup \ 
\{x^{\alpha, \perp}_r\}_{r = 1}^{m(\alpha, \perp)}
\end{equation*}
of $\mathfrak{k}_\alpha$. Finally we choose a basis $\{x_i^0\}_{i= 1}^{\dim \mathfrak{k}_0}$ of $\mathfrak{k}_0$. 
Then the relations
\begin{equation*}
\begin{array}{lcl}
\ [\xi, x^0_i] =0, 
&& 
\ [\xi, y^{0, \lambda}_j] = [\xi, y^{0, \perp}_l]= 0,
\medskip
\\
\ [\xi, x^{\alpha, \lambda}_k] = - \langle \alpha, \xi \rangle\,  y^{\alpha, \lambda}_k,
&&
\ [\xi, y^{\alpha, \lambda}_k] =  \langle \alpha, \xi \rangle\,  x^{\alpha, \lambda}_k, 
\medskip
\\
\ [\xi, x^{\alpha, \perp}_r] = - \langle \alpha, \xi \rangle\, y^{\alpha, \perp}_r,
&& 
\ [\xi, y^{\alpha, \perp}_r] = \langle \alpha, \xi \rangle\,  x^{\alpha, \perp}_r.
\end{array}
\end{equation*}
hold for any $\xi \in \mathfrak{t}$.

We write $V(\mathfrak{g})$ for the Hilbert space $V_\mathfrak{g} = L^2([0,1], \mathfrak{g})$ and decompose
\begin{align*}
V(\mathfrak{g})
&=
V(\mathfrak{k}_0)
+
V(\mathfrak{m}_0 \cap T_{eK} N)
+
V(\mathfrak{m}_0 \cap T^\perp_{eK} N)
\\
&
+
\sum_{\alpha \in \Delta^+} ( V(\mathfrak{k}_\alpha) + V(\mathfrak{m}_\alpha \cap T_{eK}N) + V(\mathfrak{m}_\alpha \cap T^\perp_{eK} N)).
\end{align*}
We equip a suitable basis with each term above. Recall that in addition to
\begin{equation*}
\{1, \sqrt{2} \cos 2 n \pi t, \sqrt{2} \cos 2 n \pi t\}_{n = 1}^\infty
\end{equation*}
there are two other kinds of orthonormal bases of $L^2([0,1], \mathbb{R})$, namely
\begin{align*}
\{1, \sqrt{2} \cos 2 n \pi t\}_{n = 1}^\infty
\qquad \text{and} \qquad 
\{\sqrt{2} \sin n \pi t\}_{n = 1}^\infty.
\end{align*}
We consider bases 
\begin{equation*}
\begin{array}{ccl}
\{x_i^0 \sin n \pi t\}_{i, \, n} &\text{of}& V(\mathfrak{k}_0),
\medskip
\\
\{y_j^{0, \lambda}\}_{\lambda, \, j}\cup\{y_j^{0, \lambda} \cos n \pi t\}_{\lambda, \, j, \, n} &\text{of}&V(\mathfrak{m}_0 \cap T_{eK} N),
\medskip
\\
\{y^{0, \perp}_l\}_l \cup \{y^{0, \perp}_l \cos n \pi t\}_{l, \, n} &\text{of}& V(\mathfrak{m}_0
 \cap T^\perp_{eK} N),
\medskip
\\
\{x_k^{\alpha, \lambda} \sin n \pi t\}_{\lambda, \, k, \,n} \cup \{x^{\alpha, \perp}_{r} \sin n \pi t\}_{r, \, n}&\text{of}&V(\mathfrak{k}_\alpha),
\medskip
\\
\{y^{\alpha, \lambda}_k\}_{\lambda, \, k} \cup \{y^{\alpha, \lambda}_k \cos n \pi t\}_{\lambda, \, n,\,k}&\text{of}&V(\mathfrak{m}_\alpha \cap T_{eK} N),
\medskip
\\
\{y^{\alpha, \perp}_r\}_r\cup \{y^{\alpha, \perp}_r \cos n \pi t\}_{n,\,r} &\text{of}&V(\mathfrak{m}_\alpha \cap T^\perp_{eK} N).
\end{array}
\end{equation*}
Then all these bases form a basis of $V(\mathfrak{g}) = V_\mathfrak{g} \cong T_{\hat{0}} V_\mathfrak{g}$. Since $\Phi : V_\mathfrak{g} \rightarrow G$ is a Riemannian submersion with the orthogonal direct sum decomposition (\cite[p.\ 686]{TT95})
\begin{equation*}
T_{\hat{0}} V_\mathfrak{g}  = T_{\hat{0}} \Phi^{-1}(e) \oplus \hat{\mathfrak{g}}
, \qquad
{\textstyle X = (X - \int_0^1 X(t) dt) \oplus \int_0^1 X(t) dt},
\end{equation*}
we have the orthogonal direct sum decomposition
\begin{equation*}
T_{\hat{0}} V_\mathfrak{g}
\cong
T_{\hat{0}} \Phi_K^{-1}(N)
\oplus
T^\perp_{eK} N 
, \qquad
X 
=
{\textstyle (X - (\int_0^1 X(t) dt)^\perp)
\oplus
(\int_0^1 X(t) dt)^\perp},
\end{equation*}
where $\perp$ denotes the projection from $\mathfrak{g} = \mathfrak{k} \oplus T_{eK} N \oplus T^\perp_{eK} N$ onto $T^\perp_{eK} N$. Thus we obtain a basis
\begin{align*}
&
\{x_i^0 \sin n \pi t\}_{i, \, n}
\cup
\{y_j^{0, \lambda}\}_{\lambda, \,j} 
\cup
\{y^{0, \lambda}_j \cos n \pi t\}_{\lambda, \,j, \, n}
\cup
\{y^{0, \perp}_r \cos n \pi t\}_{r, \, n}
\\
& 
\ \cup \  
\bigcup_{\alpha \in \Delta^+}
(
\{x_k^{\alpha, \lambda} \sin n \pi t\}_{\lambda,\,  k, \,n}
\cup
\{y^{\alpha, \lambda}_k\}_{\lambda, \, k} 
\cup
\{y^{\alpha, \lambda}_k \cos n \pi t\}_{\lambda, \, k, \, n}
)
\\
&
\ \cup \ 
\bigcup_{\alpha \in \Delta^+}
(
\{x^{\alpha, \perp}_r \sin n \pi t\}_{r,n}
\cup
\{y^{\alpha, \perp}_r \cos n \pi t\}_{r, \, n} 
)
\end{align*}
of $T_{\hat{0}} \Phi_K^{-1}(N)$.

For each $\xi \in \mathfrak{t}$ we denote by $A^{\Phi_K^{-1}(N)}_{\hat{\xi}}$ the shape operator of $\Phi_K^{-1}(N)$ in the direction of $\hat{\xi}$. Similarly to \cite[Lemma 3.1]{M2} the following lemma holds.
\begin{lem}\label{lem} \ 
\begin{enumerate}
\item 
$
\displaystyle
A^{\Phi_K^{-1}(N)}_{\hat{\xi}}(x^0_i \sin n \pi t) = 0
$
,\ 
$
\displaystyle
A^{\Phi_K^{-1}(N)}_{\hat{\xi}}(y^{0, \lambda}_j)
=
\langle \lambda, \xi \rangle y^{0, \lambda}_j,
$

\item 
$
\displaystyle
A^{\Phi_K^{-1}(N)}_{\hat{\xi}}(y^{0, \lambda}_j \cos n \pi t)
= 
A^{\Phi_K^{-1}(N)}_{\hat{\xi}}(y^{0, \perp}_l \cos n \pi t)
=0,
$

\item
$
\displaystyle
A^{\Phi_K^{-1}(N)}_{\hat{\xi}}
(x^{\alpha, \perp}_r \sin n \pi t)
=
- \frac{\langle \alpha, \xi \rangle}{n \pi} y^{\alpha, \perp}_r  \cos n \pi t,
$
\\
$
\displaystyle
A^{\Phi_K^{-1}(N)}_{\hat{\xi}}
(y^{\alpha, \perp}_r \cos n \pi t)
= 
- \frac{\langle \alpha, \xi \rangle}{n \pi} x^{\alpha, \perp}_r \sin n \pi t,
$

\item
$
\displaystyle
A^{\Phi_K^{-1}(N)}_{\hat{\xi}}(y^{\alpha, \lambda}_k)
=
\langle \lambda , \xi \rangle y^{\alpha, \lambda}_k
+
\frac{2 \langle \alpha, \xi \rangle}{\pi} \sum_{n = 1}^\infty \frac{1}{n} (x^{\alpha, \lambda}_k\sin n \pi t ) ,
$

\item
$
\displaystyle
A^{\Phi_K^{-1}(N)}_{\hat{\xi}}(x^{\alpha, \lambda}_k \sin n \pi t)
=
- \frac{\langle \alpha, \xi \rangle}{n \pi } y^{\alpha, \lambda}_k (-1 + \cos n \pi t) ,
$

\item
$
\displaystyle
A^{\Phi_K^{-1}(N)}_{\hat{\xi}}(y^{\alpha, \lambda}_k \cos n \pi t)
=
- \frac{\langle \alpha, \xi \rangle}{n \pi} x^{\alpha, \lambda}_k \sin n \pi t
$.
\end{enumerate}
\end{lem}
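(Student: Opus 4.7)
The plan is to mirror the method used in the author's earlier paper \cite[Lemma 3.1]{M2}, where the (1-)curvature-adapted case was treated. Because the $c$-curvature-adapted hypothesis in Proposition \ref{cca} guarantees that the eigenspaces $\mathfrak{m}_\alpha\cap S_\lambda$ and $\mathfrak{m}_\alpha\cap T^\perp_{eK}N$ simultaneously diagonalize $\operatorname{ad}(\xi)^2=-R_\xi$ and $A^N_{dL_a(\xi)}$ for all $\xi\in\mathfrak{t}$, the same computation goes through on each of the listed basis vectors with no essential modification. The starting point is that, since $V_\mathfrak{g}$ is a flat Hilbert space,
\[
A^{\Phi_K^{-1}(N)}_{\hat\xi}(X)\;=\;-\bigl(D_X\tilde\xi\bigr)^T\big|_{\hat 0},
\]
where $\tilde\xi$ is any smooth normal extension of $\hat\xi$ along $\Phi_K^{-1}(N)$ near $\hat 0$, $D$ is the flat derivative in $V_\mathfrak{g}$, and $(\cdot)^T$ is the orthogonal projection onto $T_{\hat 0}\Phi_K^{-1}(N)$.

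The main computation I would carry out in three steps. \emph{Step 1.} For each basis vector $X$ listed before the lemma, write down an explicit curve $s\mapsto u_s$ in $\Phi_K^{-1}(N)$ with $u_0=\hat 0$ and $\dot u_0=X$, and then manufacture $\tilde\xi(u_s)$ by (a) computing the downstairs curve $p(s):=\Phi_K(u_s)\in N$, (b) parallel-transporting $\xi$ in the normal bundle of $N$ along $p$, and (c) horizontally lifting the result with respect to the Riemannian submersion $\Phi_K$ so as to land at $u_s$. \emph{Step 2.} Differentiate $\tilde\xi$ at $s=0$. Using the standard formula for the differential of the parallel transport map from \cite{TT95}, namely $d\Phi_{\hat 0}(X)=\int_0^1 X(t)\,dt$, together with the Jacobi-operator description of normal parallel transport along the geodesic $s\mapsto \exp(s\xi)K$ in $G/K$, the result is expressible entirely in terms of $\xi$, $X$ and their brackets. \emph{Step 3.} Simplify using the bracket identities
\[
[\xi,x^{\alpha,\ast}_\ast]\;=\;-\langle\alpha,\xi\rangle\,y^{\alpha,\ast}_\ast,\qquad
[\xi,y^{\alpha,\ast}_\ast]\;=\;\langle\alpha,\xi\rangle\,x^{\alpha,\ast}_\ast,
\]
the eigenvalue relations $A^N_{dL_a(\xi)}|_{S_\lambda}=\langle\lambda,\xi\rangle\,\mathrm{id}$, and the standard Fourier identities
$\int_0^1\sin(n\pi t)\,dt=(1-(-1)^n)/(n\pi)$ and $\int_0^1\cos(n\pi t)\,dt=0$ for $n\geq 1$. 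Collecting tangential parts yields (i)--(vi); note that the tangential projection is where the components in $\widehat{T^\perp_{eK}N}$ get removed, which in particular explains the absence of the constant modes $y^{0,\perp}_l$ and $y^{\alpha,\perp}_r$ on the right-hand sides of (ii) and (iii).

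The delicate point is item (iv). For the constant-path tangent vector $y^{\alpha,\lambda}_k$, the derivative $\dot{\tilde\xi}(0)$ contains two qualitatively different contributions: a ``shape-operator'' piece $\langle\lambda,\xi\rangle y^{\alpha,\lambda}_k$ coming from the fact that the downstairs curve $p(s)$ has initial velocity in the $A^N$-eigenspace $S_\lambda$, and a ``Jacobi-operator'' piece obtained by integrating $[\xi,X(t)]$-type contributions against the relevant Fourier modes; the latter, after expanding an elementary piecewise-linear function in its sine series, assembles into $\frac{2\langle\alpha,\xi\rangle}{\pi}\sum_{n\geq 1}\tfrac{1}{n}\,x^{\alpha,\lambda}_k\sin(n\pi t)$. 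Combining these two contributions with correct signs, and verifying that what survives the tangential projection is exactly the stated right-hand side, is the main technical obstacle; the remaining items (i), (ii), (iii), (v), (vi) are comparatively routine once the framework of Step 1--Step 3 is in place, since the corresponding tangent vectors already lie in a single Fourier mode and the action of $D_X\tilde\xi$ produces only one non-trivial term.
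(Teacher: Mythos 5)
Your plan is essentially the right one and matches the route the paper relies on: the paper does not prove the lemma directly but simply notes that it follows ``similarly to \cite[Lemma 3.1]{M2}'', and the strategy there is exactly what you describe, namely to compute $A^{\Phi_K^{-1}(N)}_{\hat\xi}(X)=-\bigl(\tfrac{d}{ds}\big|_{0}\tilde\xi(u_s)\bigr)^T$ along curves $u_s$ using a normal extension that combines the normal connection of $N$ with the horizontal distribution of $\Phi_K$. Carried out, the differentiation of the horizontal lift $\operatorname{Ad}(g_{u_s})^{-1}\operatorname{Ad}(g_{u_s}(1))\tilde\xi_s$ together with the parallel-transport condition $\nabla^\perp\xi_s=0$ gives the master formula
\[
A^{\Phi_K^{-1}(N)}_{\hat\xi}(X)
=-\Bigl(\bigl[\xi,\textstyle\int_0^{(\cdot)}X\bigr]-\bigl[\xi,(\textstyle\int_0^1 X)_{\mathfrak m}\bigr]-A^N_\xi\bigl((\textstyle\int_0^1 X)_{\mathfrak m}\bigr)\Bigr)^T,
\]
and items (i)--(vi) drop out of this by the bracket relations, the eigenvalue data $A^N_\xi|_{S_\lambda}=\langle\lambda,\xi\rangle\,\mathrm{id}$, and the sawtooth expansion $1-t=\sum_{n\ge1}\tfrac{2}{n\pi}\sin n\pi t$ (which, not the value of $\int_0^1\sin n\pi t\,dt$, is the key identity for (iv)).

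Two imprecisions in your write-up are worth correcting. First, the $\operatorname{ad}(\xi)$-contributions in (iii)--(vi) come from differentiating the $\operatorname{Ad}(g_{u_s})$-conjugations that define the horizontal distribution of $\Phi$, i.e.\ from the variation of the horizontal lift as $u_s$ moves; they do not come from ``the Jacobi-operator description of normal parallel transport along the geodesic $s\mapsto\exp(s\xi)K$'' --- that geodesic points \emph{away} from $N$ in the normal direction and plays no role here, since all differentiation takes place along curves \emph{in} $\Phi_K^{-1}(N)$. Second, the outcome of Step~2 is \emph{not} expressible purely in $\xi$, $X$ and their brackets: the shape operator $A^N_\xi$ of the downstairs submanifold enters as an independent piece of data (this is precisely where the $c$-curvature-adapted hypothesis and the eigenvalue decomposition of Proposition~\ref{cca} are used), and it is only after substituting $A^N_\xi|_{S_\lambda}=\langle\lambda,\xi\rangle\,\mathrm{id}$ that everything reduces to brackets and scalars.
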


The following theorem describes the principal curvatures of the PF submanifold $\Phi_K^{-1}(N)$ of $V_\mathfrak{g}$. This theorem refines \cite[Theorem 3.2]{M2} (see also \cite[Theorem 3.3]{Koi02}). In fact, if $c = 1$ then it is equivalent to the original one. It can be proven by the similar arguments using Lemma \ref{lem}.
\begin{thm}\label{pc1} 
Let 
$M = G/K$ be a symmetric space of compact type, 
$\Phi_K: V_\mathfrak{g} \rightarrow M$ the parallel transport map, 
$N$ a $c$-curvature-adapted submanifold of $M$ through $e K$,
and 
$\mathfrak{t}$ an arbitrary $c$-dimensional abelian subspace in $\mathfrak{m}$ satisfying the condition \textup{(ii)} of Definition \textup{\ref{ccad}}. Then for each $\xi \in \mathfrak{t}$ the principal curvatures of the PF submanifold $\Phi^{-1}_K(N)$ in the direction of $\hat{\xi}$ are given by 
\begin{align*}
\{0\}
& \cup
\{\langle \lambda ,\xi \rangle 
\mid
\lambda \in \Lambda_0 \cup {\textstyle \bigcup_{\beta \in \Delta^+_\xi }} \Lambda_\beta \}
\\
& \cup
\left\{
\left.
\frac{\langle \alpha, \xi \rangle}{\arctan \frac{\langle \alpha, \xi \rangle}{\langle \lambda, \xi \rangle}+ m \pi}
\ \right|\ 
\alpha \in \Delta^+ \backslash \Delta^+_\xi , \ \lambda \in \Lambda_\alpha, \ m \in \mathbb{Z} \right\}
\\
& \cup
\left\{
\left.
\frac{\langle \alpha, \xi \rangle}{n \pi}
\ \right|\ 
\alpha \in \Delta^+ \backslash \Delta^+_\xi , \ \mathfrak{m}_\alpha \cap T^\perp_{eK} N \neq \{0\}, \  n \in \mathbb{Z} \backslash \{0\}
\right\},
\end{align*}
where we set $\Delta^+_\xi := \{\beta \in \Delta^+ \mid \langle \beta , \xi \rangle = 0\}$ and $\arctan \frac{\langle \alpha, \xi \rangle}{\langle \lambda, \xi \rangle} := \frac{\pi}{2}$ if $\langle \lambda, \xi \rangle = 0$. The eigenfunctions and the multiplicities are given in the following table.
\begin{table}[h]
\begin{tabular}{lll} \hline
	Eigenvalue 
	&
	Basis of eigenfunctions 
	& 
	Multiplicity
\\ \hline 
	$0$ 
	&
	\hspace{-3.5mm} 
	$\begin{array}{l}
		\{
		x^0_i \sin n \pi t,\ y^{0, \lambda}_j \cos n \pi t, \  y^{0, \perp}_l \cos n \pi t
		\}_{\lambda \in \Lambda_0, \, n \in \mathbb{Z}_{\geq 1} , \, i, \, j, \, l} 
	\\ 
	\ \cup \ 
		\{
		x^{\beta, \lambda}_k \sin n \pi t
		, \ 
		y^{\beta, \lambda}_k \cos n \pi t
		\}_{\beta \in \Delta_\xi, \, \lambda \in \Lambda_\beta, \, n \in \mathbb{Z}_{\geq 1}, \, k}
	\\
	\ \cup\ 
		\{
		x^{\beta, \perp}_r \sin n \pi t
		, \ 
		y^{\beta, \perp}_r \cos n \pi t
		\}_{\beta \in \Delta_\xi , \ n \in \mathbb{Z}_{\geq 1}, \, r}
	\end{array}$
	&
	$\infty$  
\\
	$\langle \lambda, \xi \rangle$ 
	& 	
	$\{y^{0, \lambda}_j\}_j \cup \{y^{\beta, \lambda}_k\}_{\beta \in \Delta_\xi,\,  k}$
	&
	\hspace{-3.5mm} 
	$\begin{array}{l}
		m(0, \lambda)\ +\  \smallskip
		\\
		\sum_{\beta} m(\beta, \lambda)
	\end{array}$
\\ 
	$\frac{\langle \alpha, \xi \rangle}{\arctan \frac{\langle \alpha, \xi \rangle}{\langle \lambda, \xi \rangle}+ m \pi}$
	&
	$
	\{\underset{n \in \mathbb{Z}}{\sum}
	\frac{ \arctan \frac{\langle \alpha, \xi \rangle}{\langle \lambda, \xi \rangle} + m \pi}{\arctan \frac{\langle \alpha, \xi\rangle}{\langle \lambda, \xi \rangle} + (m + n)\pi } 
	(x^{\alpha, \lambda}_k \sin n \pi t
	+
	y^{\alpha, \lambda}_k \cos n \pi t)
	\}_{k}
	$ 
	&
	$m(\alpha, \lambda)$
\\
	$\frac{\langle \alpha, \xi \rangle}{n \pi}$
	& 
	$\{x^{\alpha, \perp}_r \sin n \pi t - y^{\alpha, \perp}_r \cos n \pi t
	\}_r$ 
	&
	$m(\alpha,\perp)$
\\ \hline 
\end{tabular}
\end{table}
\end{thm}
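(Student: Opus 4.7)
The plan is to parallel the proof of \cite[Theorem 3.2]{M2} (the case $c=1$, see also \cite[Theorem 3.3]{Koi02}) and verify that the passage to $c$-curvature-adapted submanifolds only replaces the single scalar eigenvalue $\langle\lambda,\xi\rangle$ by a family parametrized by $\lambda\in\Lambda_\alpha\subset\mathfrak{t}$, after which the block-by-block diagonalization is formally identical. The input is Lemma \ref{lem}, which determines how $A^{\Phi_K^{-1}(N)}_{\hat{\xi}}$ acts on the explicit basis of $T_{\hat{0}}\Phi_K^{-1}(N)$ fixed above.

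First I would read off from Lemma \ref{lem} an orthogonal decomposition of $T_{\hat{0}}\Phi_K^{-1}(N)$ into $A^{\Phi_K^{-1}(N)}_{\hat{\xi}}$-invariant subspaces of four types: (a) a null subspace generated by $\{x^0_i\sin n\pi t\}$, $\{y^{0,\lambda}_j\cos n\pi t,\ y^{0,\perp}_l\cos n\pi t\}_{n\geq 1}$ together with all modes attached to roots $\beta\in\Delta^+_\xi$ (where $\langle\beta,\xi\rangle=0$); (b) one-dimensional eigenspaces $\mathbb{R}y^{0,\lambda}_j$ and $\mathbb{R}y^{\beta,\lambda}_k$ ($\beta\in\Delta^+_\xi$) with eigenvalue $\langle\lambda,\xi\rangle$ coming from parts (i) and (iv) of Lemma \ref{lem} with $\langle\alpha,\xi\rangle=0$; (c) two-dimensional subspaces $\mathrm{span}\{x^{\alpha,\perp}_r\sin n\pi t,\ y^{\alpha,\perp}_r\cos n\pi t\}$ ($\alpha\in\Delta^+\setminus\Delta^+_\xi$, $n\geq 1$) on which the shape operator is an off-diagonal matrix of norm $\langle\alpha,\xi\rangle/(n\pi)$, whose eigenvectors are $x^{\alpha,\perp}_r\sin n\pi t\mp y^{\alpha,\perp}_r\cos n\pi t$ with eigenvalues $\pm\langle\alpha,\xi\rangle/(n\pi)$ (both signs captured by letting $n\in\mathbb{Z}\setminus\{0\}$); and (d) the infinite-dimensional subspaces
\[
V_{\alpha,\lambda,k}\;:=\;\overline{\mathrm{span}}\bigl(\{y^{\alpha,\lambda}_k\}\cup\{x^{\alpha,\lambda}_k\sin n\pi t,\ y^{\alpha,\lambda}_k\cos n\pi t\}_{n\geq 1}\bigr),
\]
for $\alpha\in\Delta^+\setminus\Delta^+_\xi$ and $\lambda\in\Lambda_\alpha$, whose invariance is immediate from parts (iv)--(vi) of Lemma \ref{lem}.

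The main work is on $V_{\alpha,\lambda,k}$. Writing a candidate eigenvector
\[
v\;=\;a_0 y^{\alpha,\lambda}_k+\sum_{n\geq 1}\bigl(a_n x^{\alpha,\lambda}_k\sin n\pi t+b_n y^{\alpha,\lambda}_k\cos n\pi t\bigr)
\]
and matching coefficients in $A^{\Phi_K^{-1}(N)}_{\hat{\xi}}v=\mu v$ via Lemma \ref{lem} yields the relations $\mu b_n=-\tfrac{\langle\alpha,\xi\rangle}{n\pi}a_n$, $\mu a_n=\tfrac{2\langle\alpha,\xi\rangle}{n\pi}a_0-\tfrac{\langle\alpha,\xi\rangle}{n\pi}b_n$, and the self-consistency equation $\mu a_0=\langle\lambda,\xi\rangle a_0+\sum_{n\geq 1}\tfrac{\langle\alpha,\xi\rangle}{n\pi}a_n$. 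Eliminating $b_n$ gives $a_n=2\mu\langle\alpha,\xi\rangle n\pi\,a_0/(\mu^2 n^2\pi^2-\langle\alpha,\xi\rangle^2)$, and substitution into the self-consistency equation together with the classical partial fraction expansion
\[
\cot z\;=\;\frac{1}{z}+\sum_{n\geq 1}\frac{2z}{z^2-n^2\pi^2},\qquad z=\frac{\langle\alpha,\xi\rangle}{\mu},
\]
reduces it to $\langle\alpha,\xi\rangle\cot(\langle\alpha,\xi\rangle/\mu)=\langle\lambda,\xi\rangle$, whose solutions are $\mu=\langle\alpha,\xi\rangle/(\arctan(\langle\alpha,\xi\rangle/\langle\lambda,\xi\rangle)+m\pi)$ for $m\in\mathbb{Z}$. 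Substituting back produces the explicit eigenfunction listed in the table.

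The remaining step is bookkeeping: summing dimensions of the $\langle\lambda,\xi\rangle$-eigenspace across (a) and (b) gives multiplicity $m(0,\lambda)+\sum_{\beta\in\Delta^+_\xi}m(\beta,\lambda)$, the $V_{\alpha,\lambda,k}$ block contributes $m(\alpha,\lambda)$ to each $\mu$ of the transcendental family, and the two-dimensional blocks contribute $m(\alpha,\perp)$ to $\pm\langle\alpha,\xi\rangle/(n\pi)$. The hard part is the infinite-dimensional computation in (d): one must verify that the formal Fourier coefficients $a_n,b_n$ decay like $1/n$, so that $v\in V_\mathfrak{g}$, and that the partial fraction step validly resolves the coupled infinite system. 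Both checks are exactly as in \cite{Koi02, M2}; the only genuine novelty is that Proposition \ref{cca} requires us to index blocks by pairs $(\alpha,\lambda)$ with $\lambda\in\Lambda_\alpha$ rather than by $\alpha$ alone, but once the eigenspace decomposition of the commuting family $\{A^N_{dL_a(\xi)}\}_{\xi\in\mathfrak{t}}$ is fixed as the data on which the Fourier modes sit, the argument transcribes directly.
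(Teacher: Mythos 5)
Your proposal is correct and follows essentially the same route as the paper: the paper explicitly omits the proof, stating only that it ``can be proven by the similar arguments using Lemma \ref{lem}'' as in \cite{M2} and \cite{Koi02}, and your block decomposition into the null part, the finite-dimensional $\langle\lambda,\xi\rangle$-eigenspaces, the two-dimensional $(\alpha,\perp,n)$-blocks, and the infinite-dimensional blocks $V_{\alpha,\lambda,k}$ together with the $\cot$ partial-fraction reduction is precisely that adaptation. Your coefficient relations and the resulting characteristic equation $\langle\alpha,\xi\rangle\cot(\langle\alpha,\xi\rangle/\mu)=\langle\lambda,\xi\rangle$ check out, the $O(1/n)$ decay of $a_n,b_n$ holds, and the only genuinely new bookkeeping is indexing blocks by pairs $(\alpha,\lambda)$ with $\lambda\in\Lambda_\alpha$ via Proposition \ref{cca}, which you identify correctly.
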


\section{Principal curvatures of $P(G, H \times K)$-orbits}\label{pcpo}

In this section, from Theorem \ref{pc1} we derive an explicit formula for the principal curvatures of orbits of $P(G, H \times K)$-actions induced by Hermann actions.

Let $M= G/K$ be a symmetric space of compact type and $H$ a symmetric subgroup of $G$. Choose and fix a maximal abelian subspace $\mathfrak{t}$ in $\mathfrak{m} \cap \mathfrak{p}$. Then $\pi(\exp \mathfrak{t})$ is a section of the Hermann action $H \curvearrowright M$ and $\hat{\mathfrak{t}} = \{\hat{x} \mid x \in \mathfrak{t}\}$ is a section of the hyperpolar $P(G, H \times K)$-action on $V_\mathfrak{g}$. We take arbitrary $w , \xi \in \mathfrak{t}$ and consider the principal curvatures of $P(G, H \times K) * \hat{w}$ in the direction of $\hat{\xi}$. 

Recall that the tangent space and the normal space of the orbit $N = H \cdot aK$ where $a := \exp w$ are decomposed as follows (cf.\  Section \ref{Hermann} and Example \ref{formulation}):
\begin{align*}
T_{aK}N
&=
\sum_{\epsilon  \in U(1)^\top_{0}} dL_{a}(\mathfrak{m}_{0, \epsilon} )
+
\sum_{\alpha \in \Delta^+}
\sum_{\epsilon \in U(1)_\alpha^\top }
dL_{a}(\mathfrak{m}_{\alpha, \epsilon})
,\\
T^\perp_{aK}N
&=
\ \qquad dL_{a}(\mathfrak{t})  \qquad 
+ 
\sum_{\alpha \in \Delta^+}
\sum_{\epsilon \in U(1)^\perp_\alpha} dL_{a}(\mathfrak{m}_{\alpha, \epsilon}),
\end{align*}
where we set $U(1)_\alpha := \{\epsilon \in U(1) \mid \mathfrak{m}_{\alpha, \epsilon} \neq \{0\}\}$ and 
\begin{align*}
&
U(1)^\top_{\alpha} 
:= \{\epsilon \in U(1)_\alpha \mid \langle \alpha , w \rangle + \frac{1}{2} \arg \epsilon \notin \pi \mathbb{Z}
\},
\\
&
U(1)^\perp_{\alpha} 
:= \{\epsilon \in U(1)_\alpha \mid  \langle \alpha , w \rangle + \frac{1}{2} \arg \epsilon \in \pi \mathbb{Z}
\}.
\end{align*}
Here $dL_a (\mathfrak{m}_{0, \epsilon})$ and $dL_a(\mathfrak{m}_{\alpha, \epsilon})$ are the eigenspaces of the shape operator $A^N_{dL_a(\xi)}$ associated with the eigenvalues $0$ and $- \langle \alpha, \xi\rangle \cot ( \langle\alpha, w \rangle + \frac{1}{2} \arg \epsilon)$ respectively. 

Using the above information we can describe the principal curvatures of orbits of $P(G, H \times K)$-actions induced by Hermann actions:
\begin{thm}\label{thm1}
Let $M = G/K$ be a symmetric space of compact type and $H$ a symmetric subgroup of $G$. Take a maximal abelian subspace $\mathfrak{t}$ in $\mathfrak{m} \cap \mathfrak{p}$ and $w \in \mathfrak{t}$. Then for each $\xi \in \mathfrak{t}$ the principal curvatures of $P(G,H \times K) * \hat{w}$ in the direction of $\hat{\xi}$ are given by
\begin{align*}
\{0\} 
& \cup
\left\{
\left.
\frac{\langle \alpha, \xi \rangle}{- \langle \alpha, w \rangle - \frac{1}{2} \arg \epsilon + m \pi}
 \ \right| \ 
\alpha \in \Delta^+ \backslash \Delta^+_\xi
, \ 
\epsilon \in U(1)_\alpha^\top
, \ 
m \in \mathbb{Z}
\right\}
\\
& \cup 
\left\{
\left.\frac{\langle \alpha, \xi \rangle}{n \pi} 
\ \right| \ 
\alpha \in \Delta^+\backslash \Delta^+_\xi
\textup{ satisfying }
U(1)_\alpha^\perp \neq \emptyset
, \ 
n \in \mathbb{Z} \backslash \{0\}
\right\}.
\end{align*}
Taking bases $\{x^{0}_i\}_i$ of $\mathfrak{k}_{0}$, $\{x^{\alpha, \epsilon}_k\}_k$ of $\mathfrak{k}_{\alpha, \epsilon}$, $\{y^{0, \epsilon}_j\}_j$ of $\mathfrak{m}_{0, \epsilon}$, $\{\eta_l\}_l$ of $\mathfrak{t}$ and $\{y^{\alpha, \epsilon}_k\}_k$ of $\mathfrak{m}_{\alpha, \epsilon}$ with the relation \eqref{basis-relation} we can describe the eigenfunctions and the multiplicities as in the following table. Here we are identifying $T_{\hat{w}} V_\mathfrak{g}$ with $T_{\hat{0}} V_\mathfrak{g}$ via the gauge transformation $g*: V_\mathfrak{g} \rightarrow V_\mathfrak{g}$ for a unique $g \in P(G,G \times \{e\})$ satisfying $g * \hat{0}  = \hat{w}$.

\begin{table}[h]
\begin{center}
\begin{tabular}{lll} \hline
	Eigenvalue 
	& 
	Basis of eigenfunctions 
	& 
	Multiplicity
\\ \hline 
	$0$ 
	& 
	\hspace{-3.5mm} 
	$\begin{array}{l}
	\{
	x^0_i \sin n \pi t, \ y^{0, \epsilon}_j \sin n \pi t, \ \eta_l \cos n \pi t
	\}_{\epsilon \in U(1)_0^\top, \, n \in \mathbb{Z}_{\geq 1}, i, \, j, \, l} 
	\\
	\ \cup \ 
	\{
	x^{\beta, \epsilon}_k \sin n \pi t, \ y^{\beta, \epsilon}_k \sin n \pi t
	\}_{\beta \in \Delta^+_\xi, \, \epsilon \in U(1)_\beta^\top, \, n \in \mathbb{Z}_{\geq 1}, \, k} 
	\\
	\ \cup \ 
	\{
	x^{\beta, \epsilon}_r \sin n \pi t, \  y^{\beta, \epsilon}_r \cos n \pi t
	\}_{\beta \in \Delta^+_\xi, \, \epsilon \in U(1)_\beta^\perp, \, n \in \mathbb{Z}_{\geq 1}, \, r}
	\end{array}$
	& 
	$\infty$  
\\ 
	$
	{\frac{\langle \alpha, \xi \rangle}{-\langle \alpha, w \rangle - \frac{1}{2} \arg \epsilon + m \pi}}
	$
	& 
	$\{
	\underset{n \in \mathbb{Z}}{\sum}
	\frac{\langle \alpha, w \rangle + \frac{1}{2} \arg \epsilon + m \pi}
			{\langle \alpha, w \rangle + \frac{1}{2} \arg \epsilon +(m + n )\pi} 
	(x^{\alpha, \epsilon}_k \sin n \pi t
	+
	y^{\alpha, \epsilon}_k \cos n \pi t)
	\}_{k}$ 
	& 
	$m(\alpha, \epsilon)$
\\ 
	${ \frac{\langle \alpha, \xi \rangle}{n \pi}}$
	&
	$
	\{x^{\alpha, \epsilon}_r \sin n \pi t - y^{\alpha, \epsilon}_r \cos n \pi t
	\}_{\epsilon \in U(1)_\alpha^\perp, \, r}
	$ 
	& 
	$\sum_\epsilon m(\alpha,\epsilon)$
\\ \hline 
\end{tabular}
\end{center}
\end{table}

In particular, if $w \in \mathfrak{t}$ is a regular point then the term $\frac{\langle \alpha, \xi \rangle}{n \pi}$ vanishes. 
\end{thm}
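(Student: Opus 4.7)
The plan is to deduce Theorem \ref{thm1} by substituting the Hermann-action data from Section \ref{Hermann} and Example \ref{formulation} into the general principal-curvature formula of Theorem \ref{pc1}.

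First I would identify $P(G, H \times K) * \hat{w}$ with $\Phi_K^{-1}(N)$ for $N := H \cdot aK$ via \eqref{inverse}, invoke Proposition \ref{GT} to see that $N$ is $c$-curvature-adapted, and note that the chosen $\mathfrak{t} \subset \mathfrak{m} \cap \mathfrak{p}$ satisfies condition (ii) of Definition \ref{ccad} by Example \ref{formulation}. Since Theorem \ref{pc1} is stated at a point of $\Phi_K^{-1}(N)$ mapping to $eK$ whereas I need the curvatures at $\hat{w}$, which maps to $aK$, I would use the simple transitivity of the $P(G, G \times \{e\})$-action to obtain the unique $g \in P(G, G \times \{e\})$ with $g * \hat{0} = \hat{w}$ and apply Theorem \ref{pc1} after transport along the gauge isometry $g *$. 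This is exactly the identification $T_{\hat{w}}V_\mathfrak{g} \cong T_{\hat{0}}V_\mathfrak{g}$ recorded in the statement.

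With this setup in place, the substitution is straightforward. From Example \ref{formulation}, $\Lambda_0 \subset \{0\}$ and $\Lambda_\alpha = \{\lambda(\alpha, \epsilon) : \epsilon \in U(1)_\alpha^\top\}$ with $\lambda(\alpha, \epsilon) = -\cot(\langle \alpha, w\rangle + \tfrac{1}{2}\arg \epsilon)\, \alpha$. The second eigenvalue family in Theorem \ref{pc1} collapses into the value $0$, since every $\lambda$ involved is either zero or proportional to some $\beta \in \Delta^+_\xi$. The third family is rewritten by using $\langle \alpha, \xi\rangle / \langle \lambda, \xi\rangle = -\tan(\langle \alpha, w\rangle + \tfrac{1}{2}\arg \epsilon)$ together with the set identity
\begin{equation*}
\bigl\{\arctan(-\tan \theta) + m\pi : m \in \mathbb{Z}\bigr\} = \bigl\{-\theta + m\pi : m \in \mathbb{Z}\bigr\},
\end{equation*}
which yields the main family of Theorem \ref{thm1}. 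The fourth family requires $\mathfrak{m}_\alpha \cap T^\perp_{eK}N \neq \{0\}$, equivalent by \eqref{normal3} to $U(1)_\alpha^\perp \neq \emptyset$. Regularity of $w$ forces every $U(1)_\alpha^\perp$ to be empty, so the final assertion follows.

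The multiplicities come from $m(\alpha, \epsilon) = \dim \mathfrak{m}_{\alpha, \epsilon}$ together with the injectivity of $\epsilon \mapsto \lambda(\alpha, \epsilon)$ on $U(1)_\alpha^\top$ already recorded in Example \ref{formulation}, which prevents collisions among distinct $\epsilon$'s. The eigenfunctions in Theorem \ref{thm1} are the images under $d(g *)_{\hat{0}}$ of those in Theorem \ref{pc1}; on constant paths this differential is the composition of $\operatorname{Ad}(a)$ with the sinusoidal rotation generated by $\operatorname{ad}(w)$ on each root space. \emph{The main obstacle} I anticipate is this last bookkeeping step: verifying that after the $\operatorname{Ad}$-twist the transported bases reduce to the $\sin n\pi t$-form stated in the zero row of the table, and that the weighted $\sin n\pi t$/$\cos n\pi t$ combinations in the main row carry exactly the coefficient $(\langle \alpha, w\rangle + \tfrac{1}{2}\arg \epsilon + m\pi)/(\langle \alpha, w\rangle + \tfrac{1}{2}\arg \epsilon + (m+n)\pi)$ appearing there. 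Apart from this Fourier expansion, every other ingredient is already available from Theorem \ref{pc1} and Example \ref{formulation}.
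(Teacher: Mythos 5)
Your proposal follows the paper's proof almost step for step: identify $P(G,H\times K)*\hat w$ with $\Phi_K^{-1}(N)$ for $N=H\cdot aK$, invoke Proposition \ref{GT} and Example \ref{formulation} for the $c$-curvature-adapted property, translate to the base point $\hat 0$ via the unique $g\in P(G,G\times\{e\})$ with $g*\hat 0=\hat w$, observe $d(g*)\hat\xi=\hat\xi$ because $g$ takes values in $\exp\mathfrak t$, and then substitute the Hermann data into Theorem \ref{pc1}. Your reduction of the eigenvalue families, the identity $\langle\alpha,\xi\rangle/\langle\lambda(\alpha,\epsilon),\xi\rangle=-\tan(\langle\alpha,w\rangle+\tfrac12\arg\epsilon)$ together with $\{\arctan(-\tan\theta)+m\pi\}_{m\in\mathbb Z}=\{-\theta+m\pi\}_{m\in\mathbb Z}$, the equivalence $\mathfrak m_\alpha\cap T^\perp_{eK}\bar N\neq\{0\}\Leftrightarrow U(1)_\alpha^\perp\neq\emptyset$, the regularity claim, and the multiplicity argument via injectivity of $\epsilon\mapsto\lambda(\alpha,\epsilon)$ are exactly what the paper does.

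The one point where you go astray is the \emph{main obstacle} you anticipate. You assert that the eigenfunctions in Theorem \ref{thm1} are the images under $d(g*)_{\hat0}$ of those in Theorem \ref{pc1}, and that the difficulty is to verify the $\operatorname{Ad}$-twisted, transported bases reduce to the form in the table. That is a misreading of the statement. The phrase ``identifying $T_{\hat w}V_\mathfrak g$ with $T_{\hat0}V_\mathfrak g$ via $g*$'' means the table entries are expressed in $T_{\hat0}V_\mathfrak g$-coordinates: they \emph{are} the eigenfunctions of $A^{\Phi_K^{-1}(\bar N)}_{\hat\xi}$ at $\hat0$, read off directly from Theorem \ref{pc1} applied to $\bar N:=L_a^{-1}N$ under the reindexing $\mathfrak m_{\alpha,\epsilon}=\mathfrak m_\alpha\cap S_{\lambda(\alpha,\epsilon)}$, $\sum_\epsilon\mathfrak m_{0,\epsilon}=\mathfrak m_0\cap S_0$, $\mathfrak t=\mathfrak m_0\cap T^\perp_{eK}\bar N$ supplied by Example \ref{formulation}. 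No explicit computation of $d(g*)$ on the basis vectors, no Fourier expansion, no $\operatorname{Ad}(a)$-twist is ever carried out; the transport is what the word ``identifying'' abbreviates, and it is left implicit because $d(g*)$ is an isometry conjugating the shape operators. So the step you flag as the main obstacle is not present at all. The only bookkeeping the paper does silently is to absorb the constant eigenvectors $\{y^{0,\lambda}_j\}$ and $\{y^{\beta,\lambda}_k\}$ with $\beta\in\Delta^+_\xi$ (the $\langle\lambda,\xi\rangle$-row of Theorem \ref{pc1}, which now has eigenvalue $0$) into the $0$-row and replace the resulting $\{1\}\cup\{\cos n\pi t\}_{n\geq1}$ blocks on those components by the equivalent orthonormal basis $\{\sin n\pi t\}_{n\geq1}$ of $L^2([0,1],\mathbb R)$.
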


\begin{proof}
Take a unique $g \in P(G, G \times \{e\})$ satisfying $g * \hat{0} = \hat{w}$. By \eqref{equiv2} we have $g(0) = \exp w = a$. From \eqref{equiv} the diagram
\begin{equation*}
\begin{CD}
V_\mathfrak{g} @>g*>> V_\mathfrak{g}
\\
@V \Phi_K VV @V \Phi_K VV
\\
M @>L_a>> M
\end{CD}
\end{equation*}
commutes. Thus setting $\bar{N} := L_a^{-1}(N)$ we have $g* \Phi_K^{-1}(\bar{N}) = \Phi_K^{-1}(N) = P(G, H \times K) * \hat{w}$ by \eqref{inverse}. Moreover since $w \in \mathfrak{t}$ it follows from $g * \hat{0} = \hat{w}$ that $g(t) \in \exp \mathfrak{t}$ for all $t \in [0,1]$. Thus we have $d(g * ) \hat{\xi} = g \hat{\xi} g^{-1} = \hat{\xi}$. Hence it suffices to compute the principal curvatures of $\Phi_K^{-1}(\bar{N})$ in the direction of $\hat{\xi}$. Since $\mathfrak{t}$ is a $c$-dimensional abelian subspace in $\mathfrak{m}$ satisfying the condition (ii) of Definition \ref{ccad} we can apply $\bar{N}$ to Theorem \ref{pc1}. From \eqref{tangent3} and \eqref{normal3} the tangent space and the normal space of $\bar{N}$ are
\begin{align*}
T_{eK}\bar{N}
&=
\sum_{\epsilon  \in U(1)_0^\top} \mathfrak{m}_{0, \epsilon}
+
\sum_{\alpha \in \Delta^+}
\sum_{\epsilon \in U(1)_\alpha^\top}
\mathfrak{m}_{\alpha, \epsilon}
,\\
\label{normal3}
T^\perp_{eK}\bar{N}
&=
\qquad \mathfrak{t}  \qquad 
+ \quad 
\sum_{\alpha \in \Delta^+}
\sum_{\epsilon \in U(1)_\alpha^\perp}\mathfrak{m}_{\alpha, \epsilon}.
\end{align*}
From \eqref{tangent4} and \eqref{normal4} the above decompositions are rewritten as
\begin{align*}
T_{eK} \bar{N} 
&=
\mathfrak{m}_0 \cap \bar{S}_0
+
\sum_{\alpha \in \Delta^+} \sum_{\lambda \in \Lambda_\alpha}
(\mathfrak{m}_\alpha \cap \bar{S}_\lambda),
\\
T^\perp_{eK} \bar{N}
&=
\mathfrak{m}_0 \cap T^\perp_{eK} \bar{N}
+
\sum_{\alpha \in \Delta^+}
(\mathfrak{m}_\alpha \cap T^\perp_{eK} \bar{N}),
\end{align*}
where $\bar{S}_0 := dL_a^{-1}(S_0)$ and $\bar{S}_\lambda := dL_a^{-1}(S_\lambda)$. Since $\langle \beta , \xi \rangle= 0$ implies $\langle \lambda(\beta, \epsilon), \xi \rangle = 0$ the eigenvalue $\langle \lambda , \xi \rangle$ in the theorem is equal to $0$. Moreover taking a unique $m' \in \mathbb{Z}$ satisfying $- \pi/2 < \langle \alpha, w \rangle + \frac{1}{2} \arg \epsilon + m' \pi \leq \pi /2$ we have 
\begin{equation*}
\arctan \frac{\langle \alpha, \xi \rangle}{\langle \lambda(\alpha, \epsilon), \xi \rangle}
=
- \langle \alpha, w \rangle  - \frac{1}{2} \arg \epsilon - m' \pi.
\end{equation*}
Since $m \in \mathbb{Z}$ in the theorem is arbitrary the assertion follows.
\end{proof}

Applying \eqref{tangent3:commute} and \eqref{normal3:commute} to Theorem \ref{thm1} we obtain the following corollary.
\begin{cor}\label{cor1}
Suppose that $\sigma \circ \tau = \tau \circ \sigma$. Then the principal curvatures of the orbit $P(G, H \times K) * \hat{w}$ in the direction of $\hat{\xi}$ are given by
\begin{align*}
\{0\}
& \cup
\left\{
\left.
\frac{\langle \alpha, \xi \rangle}{- \langle \alpha, w \rangle + m \pi}
 \ \right| \ 
\alpha \in \Delta^+ \backslash \Delta_\xi^+
, \  
\mathfrak{m}_\alpha \cap \mathfrak{p} \neq \{0\}
, \ 
\langle \alpha, w\rangle  \notin \pi \mathbb{Z}, \ m \in \mathbb{Z}
\right\}
\\
& \cup
\left\{
\left.
\frac{\langle \alpha, \xi \rangle}{ - \langle \alpha, w \rangle - \frac{\pi}{2} + m  \pi}
 \ \right| \ 
\alpha \in \Delta^+ \backslash \Delta_\xi^+
, \  
\mathfrak{m}_\alpha \cap \mathfrak{h} \neq \{0\}
, \  
\langle \alpha, w\rangle + \frac{\pi}{2} \notin \pi \mathbb{Z}, \ m \in \mathbb{Z}
\right\}
\\
& \cup
\left\{
\left.
\frac{\langle \alpha, \xi \rangle}{n \pi} 
\ \right| \ 
\alpha \in \Delta^+\backslash \Delta_\xi^+
, \ 
\mathfrak{m}_\alpha \cap \mathfrak{p}  \neq \{0\}
, \ 
\langle \alpha, w\rangle \in \pi \mathbb{Z}
, \ 
n \in \mathbb{Z} \backslash \{0\}
\right.
\\
&
\left.
\qquad \qquad \text{or} \ \,
\alpha \in \Delta^+ \backslash \Delta_\xi^+
, \ 
\mathfrak{m}_\alpha \cap \mathfrak{h} \neq \{0\}
, \ 
\langle \alpha, w\rangle + \frac{\pi}{2} \in \pi \mathbb{Z}
, \ 
n \in \mathbb{Z} \backslash \{0\}
\right\}.
\end{align*}
The multiplicities are respectively given by 
\begin{equation*}
\infty
, \qquad
\dim(\mathfrak{m}_\alpha \cap \mathfrak{p})
, \qquad
\dim(\mathfrak{m}_\alpha \cap \mathfrak{h})
, \qquad
\dim(\mathfrak{m}_\alpha \cap \mathfrak{p})
+
\dim(\mathfrak{m}_\alpha \cap \mathfrak{h}).
\end{equation*}
In particular, if $w \in \mathfrak{t}$ is a regular point then the term $\frac{\langle \alpha, \xi \rangle}{n \pi}$ vanishes. 
\end{cor}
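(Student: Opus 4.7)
The plan is to specialize Theorem \ref{thm1} to the commuting case by exploiting the fact that $U(1)_\alpha$ is then contained in $\{+1, -1\}$. When $\sigma \circ \tau = \tau \circ \sigma$, the composition $\sigma \circ \tau$ is itself an involutive automorphism of $\mathfrak{g}$, so on $\mathfrak{g}^\mathbb{C}$ its eigenvalues are real and equal $\pm 1$; consequently the inner sums over $\epsilon \in U(1)$ appearing in Theorem \ref{thm1} collapse to sums over at most the two values $\epsilon = 1$ and $\epsilon = -1$.

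The next step is to identify these two joint eigenspaces in terms of $\mathfrak{p}$ and $\mathfrak{h}$. With $\rho^+ = \sigma \circ \tau + \tau \circ \sigma$ and $\rho^- = \sigma \circ \tau - \tau \circ \sigma$ as in Section \ref{Hermann}, the commuting assumption gives $\rho^- = 0$ and $\rho^+ = 2\, \sigma \circ \tau$, so the description of $\mathfrak{g}_{\alpha, \epsilon}$ recalled there reduces to the single condition $(\sigma \circ \tau)(x) = \epsilon x$. For $x \in \mathfrak{m}$ one has $\sigma(x) = -x$, so this yields $\tau(x) = - \epsilon x$; hence $\mathfrak{m}_{\alpha, 1} = \mathfrak{m}_\alpha \cap \mathfrak{p}$ and $\mathfrak{m}_{\alpha, -1} = \mathfrak{m}_\alpha \cap \mathfrak{h}$, so that $m(\alpha, 1) = \dim(\mathfrak{m}_\alpha \cap \mathfrak{p})$ and $m(\alpha, -1) = \dim(\mathfrak{m}_\alpha \cap \mathfrak{h})$.

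With these identifications in hand, substituting $\arg \epsilon = 0$ for $\epsilon = 1$ and $\arg \epsilon = \pi$ for $\epsilon = -1$ into the eigenvalue $\frac{\langle \alpha, \xi \rangle}{-\langle \alpha, w \rangle - \frac{1}{2} \arg \epsilon + m \pi}$ of Theorem \ref{thm1} splits this family into the two families listed in the corollary, and the membership conditions $\epsilon \in U(1)_\alpha^\top$ translate respectively into $\mathfrak{m}_\alpha \cap \mathfrak{p} \neq \{0\}$ with $\langle \alpha, w \rangle \notin \pi \mathbb{Z}$ and into $\mathfrak{m}_\alpha \cap \mathfrak{h} \neq \{0\}$ with $\langle \alpha, w \rangle + \pi/2 \notin \pi \mathbb{Z}$. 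Similarly $U(1)_\alpha^\perp \neq \emptyset$ translates into the disjunctive condition that accompanies $\frac{\langle \alpha, \xi \rangle}{n \pi}$ in the statement, and the multiplicity $\sum_{\epsilon \in U(1)_\alpha^\perp} m(\alpha, \epsilon)$ takes the compact form $\dim(\mathfrak{m}_\alpha \cap \mathfrak{p}) + \dim(\mathfrak{m}_\alpha \cap \mathfrak{h})$.

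I expect no serious obstacle, as the argument is a direct substitution; the only delicate point is the disjoint bookkeeping of the two subcases $\epsilon = 1$ and $\epsilon = -1$. In particular, for any given $\alpha$ the congruences $\langle \alpha, w \rangle \in \pi \mathbb{Z}$ and $\langle \alpha, w \rangle + \pi/2 \in \pi \mathbb{Z}$ cannot both hold (since $\pi/2 \notin \pi \mathbb{Z}$), so at most one of the two summands in the multiplicity formula actually contributes, and this mutual exclusivity is what allows the two cases to be merged cleanly into the single disjunctive condition in the statement. The regularity remark at the end follows at once from the corresponding remark in Theorem \ref{thm1}, since at a regular $w$ no $\alpha \in \Delta^+$ satisfies either congruence.
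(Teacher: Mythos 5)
Your approach is the same as the paper's: specialize Theorem \ref{thm1} to the commuting case. The paper simply cites the commuting-case decompositions \eqref{tangent3:commute} and \eqref{normal3:commute} established in Section \ref{Hermann}, whereas you re-derive the key identifications $\mathfrak{m}_{\alpha,1}=\mathfrak{m}_\alpha\cap\mathfrak{p}$ and $\mathfrak{m}_{\alpha,-1}=\mathfrak{m}_\alpha\cap\mathfrak{h}$ directly from the definitions via $\rho^-=0$ and the eigenvalue condition $(\sigma\circ\tau)(x)=\epsilon x$. That derivation is correct and your translation of the eigenvalue formulas under $\arg 1=0$, $\arg(-1)=\pi$ is right, as is the translation of $U(1)_\alpha^\top$ and $U(1)_\alpha^\perp$ into the stated congruence conditions on $\langle\alpha,w\rangle$.

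One point deserves a closer look, however. You write that the multiplicity $\sum_{\epsilon\in U(1)_\alpha^\perp} m(\alpha,\epsilon)$ ``takes the compact form $\dim(\mathfrak{m}_\alpha\cap\mathfrak{p})+\dim(\mathfrak{m}_\alpha\cap\mathfrak{h})$,'' and then immediately observe that the two congruences $\langle\alpha,w\rangle\in\pi\mathbb{Z}$ and $\langle\alpha,w\rangle+\pi/2\in\pi\mathbb{Z}$ can never hold simultaneously. But these two assertions sit in tension: mutual exclusivity means $U(1)_\alpha^\perp$ contains at most one of $\pm1$, so the sum $\sum_{\epsilon\in U(1)_\alpha^\perp}m(\alpha,\epsilon)$ is actually $\dim(\mathfrak{m}_\alpha\cap\mathfrak{p})$ \emph{or} $\dim(\mathfrak{m}_\alpha\cap\mathfrak{h})$, whichever single case applies --- never the sum. (Both dimensions are fixed numbers; neither becomes zero just because the corresponding congruence fails.) The displayed multiplicity in the corollary is loose on this point, and the worked example in Section \ref{counterexample} in fact uses the correct single-term multiplicity: for $\alpha=e_i-e_j$ with $\langle\alpha,w\rangle=0$ the family $\frac{\langle\alpha,\xi\rangle}{n\pi}$ has multiplicity $1=\dim(\mathfrak{m}_\alpha\cap\mathfrak{p})$, not $2$. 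Your mutual-exclusivity observation is exactly what one should say here; it just argues \emph{against} the sum formula rather than for it, and your write-up should make that explicit instead of presenting the sum as the compact form of $\sum_{\epsilon\in U(1)_\alpha^\perp}m(\alpha,\epsilon)$.
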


Applying \eqref{tangent4} and \eqref{normal4} to Theorem \ref{thm1} we obtain the following corollary.
\begin{cor}\label{cor2}
Suppose that $\sigma = \tau$. Then the principal curvatures of the orbit $P(G, H \times K) * \hat{w}$ in the direction of $\hat{\xi}$ are given by
\begin{align*}
\{0\} 
& \cup
\left\{
\left.
\frac{\langle \alpha, \xi \rangle}{- \langle \alpha, w \rangle + m \pi}
 \ \right| \ 
\alpha \in \Delta^+\backslash \Delta_\xi^+
, \ 
\langle \alpha, w\rangle \notin \pi \mathbb{Z}, \ m \in \mathbb{Z}
\right\}
\\
& \cup
\left\{
\left.\frac{\langle \alpha, \xi \rangle}{n \pi} \ \right| \ 
\alpha \in \Delta^+\backslash \Delta_\xi^+
, \ 
\langle \alpha, w\rangle \in \pi \mathbb{Z},  \ n \in \mathbb{Z} \backslash \{0\}
\right\}.
\end{align*}
The multiplicities are respectively given by 
\begin{equation*}
\infty
, \qquad
\dim \mathfrak{m}_\alpha 
, \qquad
\dim \mathfrak{m}_\alpha.
\end{equation*}
In particular, if $w \in \mathfrak{t}$ is a regular point then the term $\frac{\langle \alpha, \xi \rangle}{n \pi}$ vanishes. 
\end{cor}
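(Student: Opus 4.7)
The plan is to specialize Theorem \ref{thm1} to the hypothesis $\sigma = \tau$, which forces the refined root decomposition to collapse onto the single eigenvalue $\epsilon = 1$ of $\sigma \circ \tau$. First I would observe that $\sigma = \tau$ implies $\sigma \circ \tau = \mathrm{id}_{\mathfrak{g}}$, so $\mathfrak{g}^{\mathbb{C}} = \mathfrak{g}(1)$. Consequently $\mathfrak{g}_{0,\epsilon} = \{0\}$ and $\mathfrak{g}_{\alpha,\epsilon} = \{0\}$ for every $\epsilon \neq 1$, while $\mathfrak{g}_{0,1} = \mathfrak{g}_0$ and $\mathfrak{g}_{\alpha,1} = \mathfrak{g}_\alpha$. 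Taking the intersections with $\mathfrak{m}$ yields $\mathfrak{m}_{\alpha,1} = \mathfrak{m}_\alpha$ and $m(\alpha,1) = \dim \mathfrak{m}_\alpha$.

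Next I would translate the auxiliary sets from Example \ref{formulation}. Since only $\epsilon = 1$ contributes and $\arg 1 = 0$, one has $U(1)_0^\top = \emptyset$, and for each $\alpha \in \Delta^+$ the dichotomy
\begin{equation*}
U(1)_\alpha^\top = \begin{cases} \{1\} & \text{if } \langle \alpha, w\rangle \notin \pi\mathbb{Z}, \\ \emptyset & \text{if } \langle \alpha, w\rangle \in \pi\mathbb{Z}, \end{cases} \qquad U(1)_\alpha^\perp = \begin{cases} \emptyset & \text{if } \langle \alpha, w\rangle \notin \pi\mathbb{Z}, \\ \{1\} & \text{if } \langle \alpha, w\rangle \in \pi\mathbb{Z}, \end{cases}
\end{equation*}
holds. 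This is of course consistent with the decompositions \eqref{tangent4}--\eqref{normal4} for the orbit $N = H \cdot aK$ in the $\sigma = \tau$ case.

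Substituting $\arg \epsilon = 0$ in the eigenvalue $\frac{\langle \alpha, \xi\rangle}{-\langle \alpha, w\rangle - \frac{1}{2}\arg\epsilon + m\pi}$ from Theorem \ref{thm1} turns it into $\frac{\langle \alpha, \xi\rangle}{-\langle \alpha, w\rangle + m\pi}$, produced exactly by those $\alpha$ with $\langle \alpha, w\rangle \notin \pi\mathbb{Z}$, and with multiplicity $m(\alpha,1) = \dim \mathfrak{m}_\alpha$. The second family $\frac{\langle \alpha, \xi\rangle}{n\pi}$ survives for those $\alpha$ with $\langle \alpha, w\rangle \in \pi\mathbb{Z}$ (and $\alpha \notin \Delta_\xi^+$), with multiplicity $\sum_\epsilon m(\alpha,\epsilon) = m(\alpha,1) = \dim \mathfrak{m}_\alpha$. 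The zero eigenvalue, of infinite multiplicity, comes as in the theorem. The regular-point assertion is inherited directly from the corresponding statement in Theorem \ref{thm1}.

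There is no genuine obstacle here: the entire corollary is a mechanical specialization, and the only care required is bookkeeping of the indexing sets $U(1)_\alpha^\top$ and $U(1)_\alpha^\perp$ under the simplification $\sigma \circ \tau = \mathrm{id}$, together with the identification $\mathfrak{m}_{\alpha,1} = \mathfrak{m}_\alpha$. Once these are in place, the displayed formula for the principal curvatures and the listed multiplicities follow at once from Theorem \ref{thm1}.
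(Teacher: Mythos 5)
Your proposal is correct and follows essentially the same route as the paper: both specialize Theorem \ref{thm1} by collapsing the $\epsilon$-indexing to $\epsilon = 1$ (the paper does this implicitly by citing the $\sigma=\tau$ decompositions \eqref{tangent4}--\eqref{normal4}, while you make the identical reduction explicit via $\sigma\circ\tau = \mathrm{id}$). The bookkeeping of $U(1)_0^\top = \emptyset$ and the dichotomy for $U(1)_\alpha^\top$, $U(1)_\alpha^\perp$ is exactly what is needed, and the multiplicity identifications $m(\alpha,1) = \dim\mathfrak{m}_\alpha$ are correct.
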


\begin{rem}\label{rem5.4}
Terng \cite{Ter89} showed that any principal orbit of the $P(G, \Delta G)$-action, where $\Delta G$ is the diagonal of $G \times G$, is an isoparametric PF submanifold of $V_\mathfrak{g}$ and computed its principal curvatures. This result was extended by Pinkall and Thorbergsson \cite{PiTh90} to the case of $P(G, K \times K)$-action, where $K$ is a symmetric subgroup of $G$. (Note that in the equation (28) of \cite{PiTh90} the term $\alpha(Y)$ should be $- \alpha(Y)$.) More generally, Koike \cite{Koi11} computed the principal curvatures of principal orbits of the $P(G, H \times K)$-action induced by a Hermann action with the assumption that the involutions $\sigma$ and $\tau$ commute (\cite[p.\ 114]{Koi11}). Theorem \ref{thm1} above does not require such  assumptions at all. 
\end{rem}

\begin{rem}\label{rem5}
For each $\alpha \in \Delta^+$ and $\epsilon \in U(1)_\alpha^\top$ it is clear that
\begin{equation*}
\left\{
\left.
\frac{\langle \alpha, \xi \rangle}{- \langle \alpha, w \rangle - \frac{1}{2} \arg \epsilon + m \pi}
 \ \right| \ 
m \in \mathbb{Z}
\right\}
=
\left\{
\left.
- \frac{\langle \alpha, \xi \rangle}{\langle \alpha, w \rangle + \frac{1}{2} \arg \epsilon + m \pi}
 \ \right| \ 
m \in \mathbb{Z}
\right\}.
\end{equation*}
We will alternatively use the latter expression to describe the principal curvatures.
\end{rem}

\section{The austere property: reduced case}\label{austere:reduced}

In this section we study the relation between the austere properties of $H$- and $P(G, H \times K)$-orbits under the assumption that the root system $\Delta$ is reduced; the non-reduced case will be dealt with in the next section. Notice that this assumption is independent of the choice of a maximal abelian subspace $\mathfrak{t}$ in $\mathfrak{m} \cap \mathfrak{p}$. The main result of this section is the following theorem (Theorem I in Introduction):

\begin{thm}\label{main1}
Let $M = G/K$ be a symmetric space of compact type and $H$ a symmetric subgroup of $G$. Suppose that the root system $\Delta$ of a maximal abelian subspace $\mathfrak{t}$ in $\mathfrak{m} \cap \mathfrak{p}$ is reduced. Then for $w \in \mathfrak{g}$ the following conditions are equivalent:
\begin{enumerate}
\item the orbit $H \cdot (\exp w) K$ through $(\exp w) K$ is an austere submanifold of $M$,
\item the orbit $P(G, H \times K) * \hat{w}$ through $\hat{w}$ is an austere PF submanifold of $V_\mathfrak{g}$.
\end{enumerate}
\end{thm}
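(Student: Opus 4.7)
The plan is to translate both (A) and (B) into the same combinatorial condition on the root data, using hyperpolarity and the reducedness of $\Delta$ in sequence. Set $a := \exp w$.

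First I would reduce to directions in $\mathfrak{t}$. Since both the $H$-action on $M$ and the $P(G, H \times K)$-action on $V_\mathfrak{g}$ are hyperpolar with sections $\pi(\exp \mathfrak{t})$ and $\hat{\mathfrak{t}}$ respectively, Lemma \ref{lem2} will reduce the verification of (A) and (B) to checking, for every $\xi \in \mathfrak{t}$, that the multiset of principal curvatures of $N = H \cdot aK$ in the direction $dL_a(\xi)$ (resp.\ of $P(G, H \times K) * \hat{w}$ in the direction $\hat{\xi}$) is invariant under multiplication by $-1$. From the formulas recalled in Section \ref{Hermann} (resp.\ from Theorem \ref{thm1}), every nonzero principal curvature depends linearly on $\xi$, so $(-1)$-invariance at every $\xi$ is equivalent to $(-1)$-invariance of the underlying multiset of linear functionals on $\mathfrak{t}$.

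Next I would decompose this multiset of functionals by root classes. Since $\Delta$ is reduced, for $\alpha, \alpha' \in \Delta^+$ the functionals $c \cdot \alpha$ and $c' \cdot \alpha'$ on $\mathfrak{t}$ can be (nonzero) proportional only when $\alpha = \alpha'$. Hence the nonzero part of the multiset splits canonically as a disjoint union of $\alpha$-classes, one per $\alpha \in \Delta^+$, and $(-1)$-invariance holds if and only if it holds within each class. Writing $\theta_0(\epsilon) := \langle \alpha, w \rangle + \tfrac{1}{2} \arg \epsilon$, the $\alpha$-class for $N$ is
\begin{equation*}
\bigl\{ -\cot\theta_0(\epsilon)\cdot\alpha \bigr\}_{\epsilon \in U(1)_\alpha^\top}^{m(\alpha,\epsilon)},
\end{equation*}
while for the PF orbit it is (unifying the two families of Theorem \ref{thm1})
\begin{equation*}
\Bigl\{ \tfrac{\alpha}{m\pi - \theta_0(\epsilon)} \Bigr\}_{\epsilon \in U(1)_\alpha,\ m \in \mathbb{Z},\ m\pi \neq \theta_0(\epsilon)}^{m(\alpha,\epsilon)}.
\end{equation*}

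Finally I would identify both class-invariance conditions with the single identity
\begin{equation*}
N_\alpha([\theta]) = N_\alpha([-\theta]) \quad \text{for all}\ [\theta] \in (\mathbb{R}/\pi\mathbb{Z}) \setminus \{[0]\},\qquad N_\alpha([\theta]) := \!\!\!\sum_{\substack{\epsilon \in U(1)_\alpha \\ [\theta_0(\epsilon)] = [\theta]}}\!\!\! m(\alpha,\epsilon).
\end{equation*}
For (A), using $\pi$-periodicity of $\cot$ and $\cot(-\theta) = -\cot\theta$, the correspondence $\epsilon \mapsto [\theta_0(\epsilon)]$ converts sign-pairing of the scalars $-\cot\theta_0(\epsilon)$ into the above coset-pairing; the excluded coset $[0]$ corresponds to $\epsilon \in U(1)_\alpha^\perp$ (which do not appear in $N$'s multiset), while $[\pi/2]$ yields the curvature $0$ and is hence trivially paired. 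For (B), each $\epsilon \in U(1)_\alpha$ contributes the punctured arithmetic progression $-\theta_0(\epsilon) + \pi\mathbb{Z}$ with multiplicity $m(\alpha,\epsilon)$; the negation of this progression has residue $[\theta_0(\epsilon)]$ in $\mathbb{R}/\pi\mathbb{Z}$, so $(-1)$-invariance of the full union amounts to the same coset identity, the self-symmetric cosets $[0]$ and $[\pi/2]$ imposing no constraint.

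The main obstacle I anticipate is the last step: the contributions from $\epsilon \in U(1)_\alpha^\perp$ appear in the PF multiset but not in $N$'s, and one must check that they land entirely in the coset $[0]$ (which is exempt from the identity) so that they do not introduce any extra constraint. The reducedness of $\Delta$ is essential in the decomposition step, for otherwise a root $2\alpha \in \Delta^+$ would produce functionals proportional to $\alpha$, mixing the $\alpha$- and $2\alpha$-classes and breaking the root-by-root reduction that drives the equivalence; this is precisely why Theorem \ref{main1} restricts to the reduced case, while the non-reduced situation is treated separately in Section \ref{austere:general}.
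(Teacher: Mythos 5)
Your proposal is correct and takes essentially the same route as the paper's proof: both use Lemma \ref{lem2} to reduce to normal directions along $\hat{\mathfrak{t}}$, invoke reducedness to confine any $(-1)$-pairing within a single root class, and exploit the $\pi$-periodicity and strict monotonicity of $\cot$ to transfer a pairing from condition (A) to (B) and back. Your coset-counting function $N_\alpha$ on $\mathbb{R}/\pi\mathbb{Z}$ is a tidy repackaging of what the paper establishes element-by-element via the injectivity of $\epsilon \mapsto \cot(\langle\alpha,w\rangle + \tfrac12\arg\epsilon)$ on the (A) side and of $(\epsilon,m) \mapsto (\langle\alpha,w\rangle + \tfrac12\arg\epsilon + m\pi)^{-1}$ on the (B) side, with the cosets $[0]$ and $[\pi/2]$ correctly identified as imposing no constraint on either side.
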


To prove this theorem we need the following lemma. The statement (i) was essentially shown by Ohno  \cite[Proposition 13]{Ohno21}. Note that this lemma is still valid in the non-reduced case.
\begin{lem}\label{lem2}
Let $\mathfrak{t}$ be a maximal abelian subspace in $\mathfrak{m} \cap \mathfrak{p}$ and $w \in \mathfrak{t}$. Set
\begin{equation*}
U(1)_\alpha^* := \{\epsilon \in U(1)_\alpha \ | \  \langle \alpha, w \rangle + \frac{1}{2} \arg \epsilon \notin \frac{\pi}{2} \mathbb{Z}\},
\end{equation*}
which is a subset of $U(1)_\alpha^\top$. Then
\begin{enumerate}
\item \textup{(Ohno \cite{Ohno21})} the orbit $H \cdot (\exp w)K$ through $(\exp w)K$ is an austere submanifold of $M$ if and only if the set 
\begin{equation*}
\left\{
\left.
\cot \left( \langle \alpha, w \rangle + \frac{1}{2} \arg \epsilon \right) \alpha
\ \right|\ 
\alpha \in \Delta^+, \ \epsilon \in U(1)_\alpha^*
\right\}
\end{equation*}
with multiplicities is invariant under the multiplication by $(-1)$, where the multiplicity of $\cot ( \langle \alpha, w \rangle + \frac{1}{2} \arg \epsilon ) \alpha$ is defined to be $m(\alpha, \epsilon)$,

\item the orbit $P(G, H \times K) * \hat{w}$ through $\hat{w}$ is an austere PF submanifold of $V_\mathfrak{g}$ if and only if the set
\begin{equation*}
\left\{
\left.
\frac{1}{\langle \alpha, w \rangle + \frac{1}{2} \arg \epsilon + m \pi} \alpha
\ \right|\ 
\alpha \in \Delta^+, \ \epsilon \in U(1)_\alpha^*
, \ 
m \in \mathbb{Z}
\right\}
\end{equation*}
with multiplicities is invariant under the multiplication by $(-1)$, where the multiplicity of $\frac{1}{\langle \alpha, w \rangle + \frac{1}{2} \arg \epsilon + m \pi} \alpha$ is defined to be $m(\alpha, \epsilon)$.

\end{enumerate}
\end{lem}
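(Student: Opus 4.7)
The plan is to treat both parts along parallel lines via three steps. The first, most structural, step is a reduction to the section: for an orbit of a hyperpolar action the isotropy subgroup at a point of the section acts on the normal space with the section tangent space as a slice section, so every normal vector is conjugate by an isometry of the tangent space to one lying in the section, and the shape operators are accordingly conjugate. Consequently the austere condition reduces to symmetry under negation of the eigenvalue multiset of $A^N_{dL_a(\xi)}$ for every $\xi\in\mathfrak t$ in case (i), and of the shape operator of the PF orbit in the direction $\hat\xi$ for every $\xi\in\mathfrak t$ in case (ii). I would invoke \cite{Ohno21} for the finite-dimensional reduction and adapt the argument of \cite{M2} for the PF case.

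Granted this reduction, part (i) follows from the description of the nonzero eigenvalues in Example \ref{formulation}: they are $-\langle\alpha,\xi\rangle\cot(\langle\alpha,w\rangle+\tfrac12\arg\epsilon)$ for $\alpha\in\Delta^+$ and $\epsilon\in U(1)_\alpha^\top$, with multiplicities $m(\alpha,\epsilon)$. The values of $(\alpha,\epsilon)$ with $\langle\alpha,w\rangle+\tfrac12\arg\epsilon\in\tfrac{\pi}{2}\mathbb Z\setminus\pi\mathbb Z$ give cotangent zero and contribute nothing to the austere condition; removing them yields exactly $U(1)_\alpha^*$. Writing each remaining eigenvalue as $\langle-\cot(\langle\alpha,w\rangle+\tfrac12\arg\epsilon)\alpha,\xi\rangle$ then reduces the austere condition to the symmetry under negation of the multiset in the statement, by the elementary fact that a multiset $\{v_i\}\subset\mathfrak t$ is invariant under $v\mapsto-v$ if and only if $\{\langle v_i,\xi\rangle\}$ is invariant under $\lambda\mapsto-\lambda$ for every $\xi\in\mathfrak t$ (the nontrivial direction follows from Fourier-analyzing the associated counting measure).

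For part (ii) I would apply Theorem \ref{thm1}. Its nonzero principal curvatures along $\hat\xi$ split into (a) $\frac{\langle\alpha,\xi\rangle}{n\pi}$ with $n\in\mathbb Z\setminus\{0\}$, invariant under $n\mapsto-n$; (b) the values $\frac{\langle\alpha,\xi\rangle}{-\langle\alpha,w\rangle-\tfrac12\arg\epsilon+m\pi}$ with $\epsilon\in U(1)_\alpha^\top\setminus U(1)_\alpha^*$, whose denominator ranges over $\pi(\mathbb Z-\tfrac12)$ and is invariant under negation (send $m$ to $2k+1-m$ when $\langle\alpha,w\rangle+\tfrac12\arg\epsilon=\tfrac{\pi}{2}+k\pi$); and (c) the same expression with $\epsilon\in U(1)_\alpha^*$. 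Subfamilies (a) and (b) are automatically symmetric, so the austere condition reduces to the symmetry under negation, for every $\xi\in\mathfrak t$, of the multiset of values $\bigl\langle\tfrac{-\alpha}{\langle\alpha,w\rangle+\tfrac12\arg\epsilon+m\pi},\xi\bigr\rangle$ (using Remark \ref{rem5}) with multiplicities $m(\alpha,\epsilon)$. Applying the Fourier principle above then translates this into the symmetry of the multiset of vectors in the statement.

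The main obstacle I expect is the rigorous justification of step one in the PF setting: one must verify that the isotropy subgroup of $P(G,H\times K)$ at $\hat w$ acts on $T^\perp_{\hat w}(P(G,H\times K)\ast\hat w)$ as a polar representation with section $\hat{\mathfrak t}$. In the finite-dimensional Hermann setting this is standard (the slice representation of a polar action is polar, with a section by the section tangent space), but transferring the argument to the infinite-dimensional PF context requires care and is best handled along the lines of the author's earlier analysis in \cite{M2}.
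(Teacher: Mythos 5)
Your proposal is correct and follows essentially the same route as the paper's proof: restrict to normal vectors in (the lift of) the section, read off the eigenvalue multisets from Example~\ref{formulation} resp.\ Theorem~\ref{thm1}, observe that the contributions coming from $U(1)_\alpha^\top\setminus U(1)_\alpha^*$ (and, in part (ii), the $\langle\alpha,\xi\rangle/(n\pi)$ family) are automatically $(-1)$-symmetric and can be discarded, and invoke the elementary equivalence --- the paper cites \cite{IST09} for it --- that a finite multiset of vectors in $\mathfrak t$ is $(-1)$-invariant iff all of its scalar shadows $\{\langle v_i,\xi\rangle\}$, $\xi\in\mathfrak t$, are symmetric.

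The one place where you deviate, and where you flag an obstacle, is the reduction in step one. You phrase it via polarity of the slice representation of the isotropy group and worry about justifying this for the PF orbit. The paper sidesteps this entirely: instead of invoking slice-representation theory it uses Ohno's explicit normal-space identity $T^\perp_{aK}N = dL_a\bigl(\bigcup_{b\in K\cap a^{-1}Ha}\operatorname{Ad}(b)\mathfrak t\bigr)$, which produces for every normal vector a concrete conjugating element $b\in K\cap a^{-1}Ha$ such that $A^N_{dL_a(\operatorname{Ad}(b)\xi)}$ is conjugate to $A^N_{dL_a(\xi)}$ via $dL_a\circ dL_b\circ dL_a^{-1}$. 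This conjugacy is then transported to $\Phi_K^{-1}(N)$ purely by the equivariance relations \eqref{equiv}: $\hat b*$ and $g*$ fit into commutative diagrams over $L_b$, $L_a$ and $\Phi_K$, so the shape operator of $\Phi_K^{-1}(N)$ along $d(g*)(\operatorname{Ad}(b)\hat\xi)$ is conjugate to that along $\hat\xi$. No infinite-dimensional polar-slice-representation statement is needed; replacing your step one with this equivariance computation closes the gap you identified, and everything else in your argument matches the paper.
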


In connection with the proof of (ii) we reprove (i) here.
\begin{proof}
(i) Set $a := \exp w$ and $N := H \cdot aK$. From the straightforward computations (\cite[pp.\ 15-16]{Ohno21}) the normal space of $N$ is expressed as
\begin{equation}\label{eq:normalspace}
T^\perp_{aK} N
= 
dL_a(\mathfrak{m} \cap \operatorname{Ad}(a)^{-1} \mathfrak{p})
=
dL_a(\ \ \bigcup_{b \in K \cap a^{-1} Ha} \operatorname{Ad}(b)\mathfrak{t}\ \ ).
\end{equation}
Thus for each $v \in T^\perp_{aK} N$ there exist $\xi \in \mathfrak{t}$ and $b \in K \cap a^{-1} H a$ such that $v = dL_a(\operatorname{Ad}(b)\xi)$. Since $b$ belongs to $a^{-1} H a$ the isometry $L_b$ leaves the submanifold $\bar{N} := L_a^{-1}N$ invariant. Moreover since $b$ belongs to $K$ the differential $dL_b$ of the isometry $L_b$ at $eK$ is identified with $\operatorname{Ad}(b)$. Thus the shape operators satisfy $A^{\bar{N}}_{\operatorname{Ad}(b)\xi}= dL_b \circ  A^{\bar{N}}_\xi \circ dL_b^{-1}$. From this we obtain 
\begin{equation*}
A^N_{dL_a(\operatorname{Ad}(b)\xi)}
=
dL_a \circ dL_b \circ dL_a^{-1} \circ  A^{N}_{dL_a (\xi)} \circ dL_a \circ dL_b^{-1} \circ dL_a^{-1}.
\end{equation*}
This shows that the eigenvalues with multiplicities of the shape operators $A^N_{v}$ and $A^N_{dL_a(\xi)}$ coincide. Thus to consider the austere property it suffices to consider normal vectors $\{dL_a(\xi)\}_{\xi \in \mathfrak{t}}$ of $N$. Thus it follows from the eigenspace decomposition \eqref{tangent2} that the orbit $H \cdot (\exp w) K$ is an austere submanifold of $M$ if and only if the set
\begin{equation*}
\left\{
\left.
\langle \alpha, \xi \rangle \cot \left( \langle \alpha, w \rangle + \frac{1}{2} \arg \epsilon \right) 
\ \right|\ 
\alpha \in \Delta^+, \ \epsilon \in U(1)_\alpha^\top
\right\}
\end{equation*}
with multiplicities is invariant under the multiplication by $(-1)$ for each $\xi \in \mathfrak{t}$. Notice that this condition is equivalent to the condition that the set 
\begin{equation*}
\left\{
\left.
\cot \left( \langle \alpha, w \rangle + \frac{1}{2} \arg \epsilon \right) \alpha 
\ \right|\ 
\alpha \in \Delta^+, \ \epsilon \in U(1)_\alpha^\top
\right\}
\end{equation*}
with multiplicities is invariant under the multiplication by $(-1)$ (cf.\ \cite[p.\ 459]{IST09}). Hence the assertion follows from the fact $\cot (\frac{\pi}{2} + \pi\mathbb{Z}) = \{0\}$.

(ii) Choose a unique $g \in P(G, G \times \{e\})$ satisfying $g * \hat{0} = \hat{w}$. Then we have $a = g(0)$ and the commutative diagram
\begin{equation}\label{diagram6.1}
\begin{CD}
V_\mathfrak{g} @>g*>> V_\mathfrak{g}
\\
@V\Phi_KVV  @V\Phi_KVV
\\
M @>L_a>> \,M.
\end{CD}
\end{equation}
Since $\Phi_K$ is a Riemannian submersion it follows from \eqref{eq:normalspace} and \eqref{diagram6.1} that each normal vector of $\Phi_K^{-1}(N)$ is expressed as $(dg* )\operatorname{Ad}(b) \hat{\xi}$ for some $\xi \in \mathfrak{t}$ and $b \in K \cap a^{-1} H a$. Denote by $\hat{b} \in \mathcal{G}$ the constant path with value $b$. Then by \eqref{equiv} we have the commutative diagram
\begin{equation*}
\begin{CD}
V_\mathfrak{g} @>\hat{b}*>> V_\mathfrak{g}
\\
@V \Phi_K VV  @V \Phi_K VV
\\
M @>L_b>> \,M,
\end{CD}
\end{equation*}
where $\hat{b} *$ is identified with $\operatorname{Ad}(b)$ acting on $V_\mathfrak{g}$ by pointwise operation. 
Since $L_b$ leaves $\bar{N}$ invariant it follows that $\hat{b}*$ leaves $\Phi_K^{-1}(\bar{N})$ invariant. Thus we have 
\begin{equation*}
A^{\Phi_K^{-1}(\bar{N})}_{\operatorname{Ad}(b)(\hat{\xi})} = (d \hat{b}*) \circ A^{\Phi_K^{-1}(\bar{N})}_{\hat{\xi}} \circ (d\hat{b} *)^{-1}.
\end{equation*}
This together with the equality $g * \Phi_K^{-1}(\bar{N}) = \Phi_K^{-1}(N)$ implies
\begin{equation*}
A^{\Phi_K^{-1}(N)}_{(d g *)(\operatorname{Ad}(b)\hat{\xi})} 
= 
(d g *) \circ  (d \hat{b}) * \circ (d g*)^{-1} \circ A^{\Phi_K^{-1}(N)}_{(dg *)(\hat{\xi})} \circ (d g *) \circ (d \hat{b} *)^{-1} \circ (d g *)^{-1}.
\end{equation*}
Thus similarly it suffices to consider normal vectors $\{d(g*) \hat{\xi}\}_{\xi \in \mathfrak{t}}$ of $\Phi_K^{-1}(N)$. Note that  $g * \hat{0} = \hat{w}$ implies $d(g*) \hat{\xi} = \hat{\xi}$ as mentioned in the proof of Theorem \ref{thm1}. Thus from Theorem \ref{thm1} and Remark \ref{rem5} it follows that the orbit $P(G, H \times K) * \hat{w}$ is an austere PF submanifold of $V_\mathfrak{g}$ if and only if the set
\begin{equation*}
\left\{
\left.
\frac{1}{\langle \alpha, w \rangle + \frac{1}{2} \arg \epsilon + m \pi} \alpha
\ \right|\ 
\alpha \in \Delta^+, \ \epsilon \in U(1)_\alpha^\top
, \ 
m \in \mathbb{Z}
\right\}
\end{equation*}
with multiplicities is invariant under the multiplication by $(-1)$. Hence the assertion follows from the fact that the set 
$\{\frac{1}{\pi/2 + m \pi} \alpha\}_{m \in \mathbb{Z}}$
with multiplicities is invariant under the multiplication by $(-1)$ due to the equality $\frac{1}{ \pi /2 + m\pi} \alpha = (-1) \times \frac{1}{ \pi/2 +(-m-1)\pi}\alpha$.
\end{proof}

We are now in position to prove Theorem \ref{main1}.

\begin{proof}[Proof of Theorem \textup{\ref{main1}}]
Take a maximal abelian subspace $\mathfrak{t}$ in $\mathfrak{m} \cap \mathfrak{p}$. Since $\pi(\exp \mathfrak{t})$ is a section of the $H$-action we can assume $w \in \mathfrak{t}$ without loss of generality. 

``(i) $\Rightarrow$ (ii)" : Let $\alpha \in \Delta^+$ and $\epsilon \in U(1)_\alpha^*$. Since the orbit $H \cdot (\exp w) K$ is austere it follows from Lemma \ref{lem2} (i) that there exist $\alpha' \in \Delta^+$ and $\epsilon' \in U(1)_{\alpha'}^*$ such that 
\begin{equation}\label{eq6.1}
\cot \left( \langle \alpha, w \rangle + \frac{1}{2} \arg \epsilon \right)  \alpha 
= 
(-1) \times \cot \left( \langle \alpha', w \rangle + \frac{1}{2} \arg \epsilon' \right)  \alpha' .
\end{equation}
Since $\cot ( \langle \alpha, w \rangle + \frac{1}{2} \arg \epsilon ) \neq 0$ and $\cot ( \langle \alpha', w \rangle + \frac{1}{2} \arg \epsilon' ) \neq 0$ it follows from the reduced property of $\Delta$ that $\alpha' = \alpha$. Moreover since the map $\epsilon \mapsto \cot (\langle \alpha, w \rangle + \frac{1}{2} \arg \epsilon)$ is injective we have $m(\alpha, \epsilon) = m(\alpha, \epsilon')$. Then we have
\begin{equation*}
\cot \left( \langle \alpha, w \rangle + \frac{1}{2} \arg \epsilon \right) 
= 
(-1) \times \cot \left( \langle \alpha, w \rangle + \frac{1}{2} \arg \epsilon'\right).
\end{equation*}
Since $\cot x$ is strictly decreasing on $\mathbb{R}/ \pi \mathbb{Z}$ there exists a unique $n \in \mathbb{Z}$ such that 
\begin{equation*}
\langle \alpha, w \rangle + \frac{1}{2} \arg \epsilon 
= 
(-1) \times \left(\langle \alpha, w \rangle +  \frac{1}{2} \arg \epsilon'\right) + n \pi.
\end{equation*}
For each $m \in \mathbb{Z}$ we set $m' : = - n - m$. Then we obtain
\begin{equation*}
\frac{1}{\langle \alpha, w \rangle + \frac{1}{2} \arg \epsilon + m \pi} \alpha
=
(-1) \times \frac{1}{\langle \alpha, w \rangle + \frac{1}{2} \arg \epsilon' + m' \pi} \alpha.
\end{equation*}
Thus by Lemma \ref{lem2} (ii) the orbit $P(G, H \times K) * \hat{w}$ is an austere PF submanifold of $V_\mathfrak{g}$.

``(ii) $\Rightarrow$ (i)": Since the orbit $P(G, H \times K) * \hat{w}$ is austere it follows from Lemma \ref{lem2} (ii) that for each $\alpha \in \Delta^+$, $\epsilon \in U(1)_\alpha^*$ and $m \in \mathbb{Z}$ there exist $\alpha' \in \Delta^+$, $\epsilon' \in U(1)_{\alpha'}^*$ and $m' \in \mathbb{Z}$ such that 
\begin{equation*}
\frac{1}{\langle \alpha, w \rangle + \frac{1}{2} \arg \epsilon + m \pi} \alpha
=
(-1) \times \frac{1}{ \langle \alpha', w \rangle + \frac{1}{2} \arg \epsilon' + m' \pi} \alpha'.
\end{equation*}
Since $\Delta$ is reduced we have $\alpha = \alpha'$. Moreover since the map $(\epsilon, m) \mapsto \frac{1}{\langle \alpha, w \rangle + (\arg \epsilon)/2 + m \pi}$ is injective we have $m(\alpha, \epsilon) = m(\alpha, \epsilon')$. Then we have
\begin{equation*}
\frac{1}{\langle \alpha, w \rangle + \frac{1}{2} \arg \epsilon + m \pi} 
=
(-1) \times \frac{1}{\langle \alpha, w \rangle + \frac{1}{2} \arg \epsilon' + m' \pi},
\end{equation*}
that is, 
\begin{equation*}
 \langle \alpha, w \rangle + \frac{1}{2} \arg \epsilon + m \pi
=
(-1) \times \left( \langle \alpha, w \rangle + \frac{1}{2} \arg \epsilon' + m' \pi \right).
\end{equation*}
Hence we have 
\begin{equation*}
\cot \left(\langle \alpha, w \rangle + \frac{1}{2} \arg \epsilon \right) \alpha
=
(-1) \times \cot \left(\langle \alpha, w \rangle + \frac{1}{2} \arg \epsilon' \right) \alpha .
\end{equation*}
Thus by Lemma \ref{lem2} (i) the orbit $H \cdot (\exp w) K$ is an austere submanifold of $M$.
\end{proof}

\begin{rem} In the above proof we essentially showed that the following conditions are equivalent when $\Delta$ is reduced: 
\begin{enumerate}
\item  the orbit $H \cdot (\exp w)K$ through $(\exp w) K$ is an austere submanifold of $M$,

\item for each $\alpha \in \Delta^+$ the set
\begin{equation*}
\left\{
\left.
\cot \left( \langle \alpha, w \rangle + \frac{1}{2} \arg \epsilon \right) \alpha
\ \right|\ 
\epsilon \in U(1)_\alpha^*
\right\}
\end{equation*}
with multiplicities is invariant under the multiplication by $(-1)$,

\item for each $\alpha \in \Delta^+$ the set
\begin{equation*}
\left\{
\left.
\frac{1}{\langle \alpha, w \rangle + \frac{1}{2} \arg \epsilon + m \pi} \alpha
\ \right|\ 
\epsilon \in U(1)_\alpha^*
, \ 
m \in \mathbb{Z}
\right\}
\end{equation*}
with multiplicities is invariant under the multiplication by $(-1)$.
\item the orbit $P(G, H \times K) * \hat{w}$ through $\hat{w}$ is an austere PF submanifold of $V_\mathfrak{g}$,

\end{enumerate}
\end{rem}

\section{The austere property: general case}\label{austere:general}

In this section, without supposing that the root system $\Delta$ is reduced, we study the relation between the austere properties of $H$- and $P(G, H \times K)$-orbits.

As a preliminary we prove the following lemma, which generalizes Lemma 4.32 in  \cite{Ika11}. In fact if $\sigma = \tau$ then it is just the original one. 
\begin{lem}\label{lem3}
Let $M = G/K$ be a symmetric space of compact type, $H$ a symmetric subgroup of $G$ and $\mathfrak{t}$ a maximal abelian subspace in $\mathfrak{m} \cap \mathfrak{p}$. Suppose that there exists $\alpha \in \Delta$ satisfying $2 \alpha \in \Delta$. Then the multiplicities satisfy $m(\alpha) > m(2 \alpha)$.
\end{lem}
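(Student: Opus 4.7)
The proof aims to generalize Ikawa's Lemma 4.32, which covers the case $\sigma = \tau$ via the classical theory of restricted root systems of compact symmetric pairs (see Helgason, Chapter X). A useful first observation is that the root system $\Delta$ depends only on the pair $(\mathfrak{g}, \mathfrak{t})$: the complex root space $\mathfrak{g}(\alpha)$ is defined purely in terms of the adjoint action of $\mathfrak{t}$ on $\mathfrak{g}^\mathbb{C}$, without reference to $\sigma$ or $\tau$. This suggests embedding our situation into a classical restricted-root-system setup.

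The plan is to extend $\mathfrak{t}$ to a maximal abelian subspace $\mathfrak{a} \subset \mathfrak{m}$, using only $\sigma$. Then $\mathfrak{a}$ plays the role of a Cartan for the classical compact symmetric pair $(\mathfrak{g}, \mathfrak{k})$, with restricted root system $\Sigma$, and Ikawa's Lemma 4.32 applied to $(\mathfrak{g},\mathfrak{k})$ gives $m_\Sigma(\widetilde\alpha) > m_\Sigma(2\widetilde\alpha)$ whenever $\widetilde\alpha, 2\widetilde\alpha \in \Sigma$. Each $\Delta$-root space decomposes under the larger abelian action of $\mathfrak{a}$ into $\Sigma$-root spaces whose restrictions to $\mathfrak{t}$ coincide, so
\[
m_\Delta(\beta) = \sum_{\widetilde\beta \in \Sigma,\; \widetilde\beta|_\mathfrak{t} = \beta} m_\Sigma(\widetilde\beta).
\]
The inequality $m_\Delta(\alpha) > m_\Delta(2\alpha)$ would then follow from a matching argument: to each $\widetilde\beta \in \Sigma$ restricting to $2\alpha$, associate $\widetilde\beta/2$, which (when it lies in $\Sigma$) restricts to $\alpha$ and has strictly larger multiplicity by Ikawa's inequality.

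The main obstacle is that $\widetilde\beta/2$ need not lie in $\Sigma$, so this matching is not automatic; controlling the remainder must use the specific property that $\mathfrak{t}$ is maximal abelian in $\mathfrak{m}\cap\mathfrak{p}$ and not merely in $\mathfrak{m}$, which constrains how $\Sigma$ projects onto $\mathfrak{t}^{*}$. An alternative (or complementary) route, likely required to close the gap, is a direct $\mathfrak{sl}_2$-theoretic argument: starting from a nonzero $X \in \mathfrak{g}(2\alpha)$ and its compact-real-form conjugate $\bar X \in \mathfrak{g}(-2\alpha)$, build an $\mathfrak{sl}_2$-triple $(X, -\bar X, h)$ with $h \in i\mathfrak{t}$ normalized so that $\alpha(h) = 1$. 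After verifying that $k\alpha \notin \Delta$ for $|k| \geq 3$, decompose $\mathfrak{g}^\mathbb{C}$ as an $\mathfrak{sl}_2$-module into irreducible pieces $V_k$; since the weights are bounded by $2$, we get $\mathfrak{g}^\mathbb{C} \cong a V_2 \oplus b V_1 \oplus c V_0$, and weight-multiplicity counting yields $m(2\alpha) = a$ and $m(\alpha) = b$. The strict inequality $b > a$ would then be obtained from the compact real structure on $\mathfrak{g}$ together with the quaternionic character of $V_1$ as an $\mathfrak{su}(2)$-module (forcing $b$ to be even) and the structure of the rank-one compact subalgebra generated by $\mathfrak{g}_{\pm\alpha} \oplus \mathfrak{g}_{\pm 2\alpha}$, which reduces the inequality to the classical classification of compact symmetric spaces of rank one with non-reduced root system.
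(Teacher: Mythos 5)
Your proposal does not close the gap, and neither of the two routes you sketch is complete. The paper's argument is considerably more direct: it picks a nonzero $z_0$ in a joint eigenspace $\mathfrak{g}(\alpha,\epsilon)^+$ of the conjugate-linear involution $z\mapsto\sigma(\bar z)$ and of $\sigma\circ\tau$, checks that then $[z_0,\bar z_0]\in(\mathfrak{m}\cap\mathfrak{p})^{\mathbb{C}}$, so by maximality of $\mathfrak{t}$ in $\mathfrak{m}\cap\mathfrak{p}$ one gets $[z_0,\bar z_0]=\sqrt{-1}\,\|z_0\|^2\alpha\in\mathfrak{t}^{\mathbb{C}}$, and then shows by a one-line Jacobi-identity computation (using $3\alpha\notin\Delta$) that $\operatorname{ad}(z_0)$ restricted to the hyperplane $(\mathbb{C}z_0)^\perp\subset\mathfrak{g}(\alpha)$ surjects onto $\mathfrak{g}(2\alpha)$. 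This gives $\dim\mathfrak{g}(2\alpha)\le\dim\mathfrak{g}(\alpha)-1$ immediately, hence $m(2\alpha)<m(\alpha)$.

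Concerning your two routes. Route~1 (pass to $\mathfrak{a}\supset\mathfrak{t}$ maximal abelian in $\mathfrak{m}$ and use the classical inequality for the restricted root system $\Sigma$ of $(\mathfrak{g},\mathfrak{k})$) founders exactly where you say it does: the folding of $\Sigma$ onto $\mathfrak{t}^*$ is not injective and $\widetilde\beta/2$ need not lie in $\Sigma$, so the multiplicity bookkeeping does not close, and you offer no mechanism to control it. Route~2 has three separate problems. First, you build the $\mathfrak{sl}_2$-triple from $X\in\mathfrak{g}(2\alpha)$ rather than from $\mathfrak{g}(\alpha)$; but then the raising operator $\operatorname{ad}(X)$ annihilates $\mathfrak{g}(\alpha)$ (since $3\alpha\notin\Delta$) and does not connect $\mathfrak{g}(\alpha)$ with $\mathfrak{g}(2\alpha)$, so the resulting weight decomposition $aV_2\oplus bV_1\oplus cV_0$ gives no a priori relation between $a=m(2\alpha)$ and $b=m(\alpha)$; the comparison really comes from $\operatorname{ad}$ of a root vector in $\mathfrak{g}(\alpha)$. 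Second, you do not ensure $h=[X,-\bar X]\in i\mathfrak{t}$: for a generic $X$ one only gets $[X,\bar X]\in\mathfrak{g}(0)=\mathfrak{g}_0^{\mathbb{C}}$, which is strictly larger than $\mathfrak{t}^{\mathbb{C}}$ in general. To land in $i\mathfrak{t}$ one must choose the root vector compatibly with both $\sigma$ and $\tau$ (precisely the paper's choice of $z_0\in\mathfrak{g}(\alpha,\epsilon)^+$), and only then does the hypothesis that $\mathfrak{t}$ is maximal abelian in $\mathfrak{m}\cap\mathfrak{p}$ (not merely in $\mathfrak{m}$) kick in. Third, the step you propose to obtain strictness --- invoking a quaternionic structure on the $V_1$-isotypic part and the classification of rank-one compact symmetric spaces with non-reduced root system --- is not a proof: the reality structure on $\mathfrak{g}^{\mathbb{C}}$ coming from $\mathfrak{g}$ is a real (not quaternionic) structure and does not obviously force $b$ even, the parity of $b$ would in any case not yield $b>a$, and the rank-one subalgebra spanned by $\mathfrak{g}_0,\mathfrak{g}_{\pm\alpha},\mathfrak{g}_{\pm2\alpha}$ is not a priori the Lie algebra of a symmetric pair of rank one of the sort the classification covers (the relevant pair here involves both $\sigma$ and $\tau$). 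The paper sidesteps all of this: the strict inequality falls out because $z_0$ itself is orthogonal to the preimage $[\bar z_0,y]$ that it constructs, so surjectivity already holds on the codimension-one subspace.
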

\begin{proof}
We extend the inner product of $\mathfrak{g}$ to the complex symmetric bi-linear form on $\mathfrak{g}^\mathbb{C}$ which is still denoted by $\langle \cdot , \cdot \rangle$. Choose $\epsilon \in U(1)$ satisfying $\mathfrak{g}(\alpha, \epsilon) \neq \{0\}$. Since $\overline{\mathfrak{g}(\alpha, \epsilon)} = \sigma(\mathfrak{g}(\alpha, \epsilon)) = \mathfrak{g}(- \alpha, \epsilon^{-1})$ the involution $z \mapsto \sigma(\bar{z})$ leaves $\mathfrak{g}(\alpha, \epsilon)$ invariant. Thus we have the $(\pm1)$-eigenspace decomposition
\begin{equation*}
\mathfrak{g}(\alpha, \epsilon) = \mathfrak{g}(\alpha, \epsilon)^+ \oplus \mathfrak{g}(\alpha, \epsilon)^-.
\end{equation*}
Take $z_0 \in \mathfrak{g}(\alpha, \epsilon)^+ \backslash \{0\}$. Then by definition we have 
\begin{equation}\label{eq7.0}
\sigma(z_0) = \bar{z}_0, 
\quad
\sigma(\bar{z}_0) = z_0
, \quad
\tau(z_0) = \epsilon \bar{z}_0
, \quad
\tau(\bar{z}_0) = \epsilon^{-1} z_0.
\end{equation}
Since $[\mathfrak{g}(\alpha), \mathfrak{g}(\alpha)] \subset \mathfrak{g}(2\alpha)$ we have the linear map $\operatorname{ad}(z_0): \mathfrak{g}(\alpha) \rightarrow \mathfrak{g}(2 \alpha)$. We restrict this map to the subspace
\begin{equation*}
(\mathbb{C} z_0)^\perp := \{z \in \mathfrak{g}(\alpha) \mid \langle z, \bar{z}_0\rangle = 0\}.
\end{equation*}
It suffices to show that the restriction $\operatorname{ad}(z_0): (\mathbb{C}z_0)^\perp \rightarrow \mathfrak{g}(2 \alpha)$ is surjective. Take arbitrary $y \in \mathfrak{g}(2 \alpha)$. We define $x \in (\mathbb{C} z_0)^\perp$ by
\begin{equation*}
x := \frac{-1}{2 \|\alpha\|^2 \|z_0\|^2} [\bar{z_0}, y]
,\quad \text{where} \ \ \|z_0\|^2 := \langle z_0 , \bar{z}_0\rangle.
\end{equation*}
Then by the Jacobi identity we have
\begin{equation}\label{eq7.0.2}
\operatorname{ad}(z_0)[\bar{z}_0, y]
=
- [\bar{z}_0, [y, z_0]] - [y, [z_0, \bar{z}_0]]
=
[[z_0, \bar{z}_0], y],
\end{equation}
where the last equality follows from $[y, z_0] \in [\mathfrak{g}(2 \alpha) , \mathfrak{g}(\alpha)] \subset \mathfrak{g}(3 \alpha) = \{0\}$. Notice that $[z_0, \bar{z}_0] \in [\mathfrak{g}(\alpha), \mathfrak{g}(-\alpha)] \subset \mathfrak{g}(0)$. Moreover from \eqref{eq7.0} we have $[z_0, \bar{z}_0] \in \mathfrak{m}^\mathbb{C} \cap \mathfrak{p}^\mathbb{C}$. Hence we have $[z_0, \bar{z}_0] \in \mathfrak{t}^\mathbb{C}$ by maximality. Since 
\begin{equation*}
\langle [z_0, \bar{z_0}] , \eta \rangle
=
\langle \bar{z}_0 , [\eta, z_0] \rangle
=
\langle \bar{z}_0 , \sqrt{-1} \langle \alpha, \eta \rangle z_0 \rangle
=
\sqrt{-1} \|z_0\|^2\langle \alpha, \eta \rangle
\end{equation*}
for all $\eta \in \mathfrak{t}$ we get $[z_0, \bar{z}_0] = \sqrt{-1} \|z_0\|^2 \alpha$. Applying this to \eqref{eq7.0.2} we obtain
\begin{equation*}
\operatorname{ad}(z_0)[\bar{z}_0, y]
=
\sqrt{-1} \|z_0\|^2 [\alpha, y]
=
- 2 \|z_0\|^2  \| \alpha\|^2 y.
\end{equation*}
Therefore we have $\operatorname{ad}(z_0)(x) = y$.  This proves the lemma.
\end{proof}

Using this lemma we study the relation between the austere properties of $H$- and $P(G, H \times K)$-orbits in the rest of this section. First we consider the case $\sigma = \tau$ (Theorem II (i) in Introduction): 
\begin{thm}\label{prop1}
Let $M = G/K$ be a symmetric space of compact type and $H$ a symmetric subgroup of $G$. Suppose that $\sigma = \tau$. Then for $w \in \mathfrak{g}$ the following conditions are equivalent:
\begin{enumerate}
\item the orbit $H \cdot (\exp w)K$ through $(\exp w)K$ is an austere submanifold of $M$,
\item the orbit $P(G, H \times K) * \hat{w}$ through $\hat{w}$ is an austere PF submanifold of $V_\mathfrak{g}$.
\end{enumerate}
\end{thm}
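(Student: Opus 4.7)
The plan is to reduce both conditions to a single elementary condition on $w$. Since $\sigma = \tau$ and each is an involution, $\sigma \circ \tau = \mathrm{id}_\mathfrak{g}$, so only $\epsilon = 1$ occurs in the refined decompositions of Section \ref{Hermann}, giving $\mathfrak{g}_{\alpha, 1} = \mathfrak{g}_\alpha$ and $m(\alpha, 1) = m(\alpha)$. Taking $w$ in a fixed maximal abelian subspace $\mathfrak{t} \subset \mathfrak{m} \cap \mathfrak{p}$ (possible since $\pi(\exp \mathfrak{t})$ is a section of the $H$-action), the set $U(1)_\alpha^*$ of Lemma \ref{lem2} equals $\{1\}$ when $\langle \alpha, w \rangle \notin \tfrac{\pi}{2}\mathbb{Z}$ and is empty otherwise. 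So Lemma \ref{lem2} translates (i) into the $(-1)$-invariance (with multiplicities $m(\alpha)$) of the multiset
\[
S_N = \bigl\{\, (\cot\langle\alpha, w\rangle)\alpha \,\bigm|\, \alpha \in \Delta^+, \ \langle\alpha, w\rangle \notin \tfrac{\pi}{2}\mathbb{Z}\,\bigr\},
\]
and (ii) into the $(-1)$-invariance (same multiplicities) of
\[
S_P = \bigl\{\,\tfrac{1}{\langle\alpha, w\rangle + m\pi}\alpha \,\bigm|\, \alpha \in \Delta^+,\ \langle\alpha, w\rangle \notin \tfrac{\pi}{2}\mathbb{Z},\ m \in \mathbb{Z}\,\bigr\}.
\]

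I would then claim that both conditions are equivalent to the single discrete condition
\[
(\ast) \qquad \langle \alpha, w \rangle \in \tfrac{\pi}{2}\mathbb{Z} \quad \text{for every } \alpha \in \Delta^+.
\]
One direction is immediate: under $(\ast)$ both $S_N$ and $S_P$ are empty, so (i) and (ii) hold trivially. For the converse, I would argue by contradiction, picking $\alpha \in \Delta^+$ with $\theta := \langle \alpha, w \rangle \notin \tfrac{\pi}{2}\mathbb{Z}$ and analyzing the contributions to $S_N$ (respectively $S_P$) lying on the line $\mathbb{R}\alpha \subset \mathfrak{t}$. After replacing $\alpha$ by the shortest root on that line, $\mathbb{R}\alpha \cap \Delta^+$ is either $\{\alpha\}$ or $\{\alpha, 2\alpha\}$.

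The main obstacle is the non-reduced line $\{\alpha, 2\alpha\} \subset \Delta^+$ with $2\theta \notin \tfrac{\pi}{2}\mathbb{Z}$ as well, so that both roots contribute. For $S_N$ the line carries the two coefficients $\cot\theta$ (multiplicity $m(\alpha)$) and $2\cot 2\theta$ (multiplicity $m(2\alpha)$); these are distinct by the identity $\cot 2\theta = (\cot^2\theta - 1)/(2\cot\theta)$, so the only $(-1)$-match possible on the line pairs the $\alpha$-contribution with the $2\alpha$-contribution, forcing $\cot\theta = -2\cot 2\theta$ together with the multiplicity equality $m(\alpha) = m(2\alpha)$, which is precisely what Lemma \ref{lem3} rules out. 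For $S_P$ one regroups by parity of $m$ to see that the line carries the arithmetic progressions $\{\tfrac{1}{\theta + k\pi}\alpha\}_{k}$ with multiplicity $m(\alpha) + m(2\alpha)$ and $\{\tfrac{1}{\theta + (k + \tfrac{1}{2})\pi}\alpha\}_{k}$ with multiplicity $m(2\alpha)$; matching a term of the first family with its negative forces either $2\theta \in \pi\mathbb{Z}$ (excluded) or $\theta \in \tfrac{\pi}{4} + \tfrac{\pi}{2}\mathbb{Z}$, and in the latter case the multiplicity comparison $m(\alpha) + m(2\alpha) = m(2\alpha)$ again contradicts Lemma \ref{lem3}.

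The remaining sub-cases — the reduced line $\{\alpha\}$, or the degenerate sub-case $2\theta \in \tfrac{\pi}{2}\mathbb{Z}$ of the non-reduced line (where $2\alpha$ does not contribute) — are easier: only one contribution survives on $\mathbb{R}\alpha$, and $(-1)$-invariance directly forces $\cot\theta = 0$ for $S_N$, or the analogous periodicity $2\theta \in \pi\mathbb{Z}$ for $S_P$, each of which places $\theta$ in $\tfrac{\pi}{2}\mathbb{Z}$ and contradicts the choice of $\alpha$. In every case we reach a contradiction, so $(\ast)$ must hold, completing the chain (i) $\Leftrightarrow (\ast) \Leftrightarrow$ (ii).
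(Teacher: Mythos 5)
Your proof is correct and follows essentially the same route as the paper: both directions are reduced, via Lemma~\ref{lem2} with $\epsilon = 1$ and the multiplicity inequality $m(\alpha) > m(2\alpha)$ of Lemma~\ref{lem3}, to the totally geodesic condition $\langle \alpha, w\rangle \in \tfrac{\pi}{2}\mathbb{Z}$ for all $\alpha \in \Delta^+$. Your presentation makes the intermediate condition~$(\ast)$ explicit and organizes the contradiction argument line-by-line in $\Delta^+$, but the key observations (only $\alpha' \in \{2\alpha, \tfrac12\alpha\}$ can match, and multiplicity mismatch kills those options) are exactly those of the paper's proof; one small redundancy is that in the $S_P$ sub-case $\theta \in \tfrac{\pi}{4} + \tfrac{\pi}{2}\mathbb{Z}$ you invoke a multiplicity comparison when that value of $\theta$ is already excluded by your running hypothesis $2\theta \notin \tfrac{\pi}{2}\mathbb{Z}$.
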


\begin{rem}
The above conditions (i) and (ii) are also equivalent to the following conditions (see \cite[Proposition 4.27, Theorem 4.31]{Ika11} and \cite[Theorem 8]{M1}. See also \cite[Theorem 1]{M3} for the irreducible case):
\begin{enumerate}
\item[(iii)] the orbit $H \cdot (\exp w)K$ through $(\exp w)K$ is a totally geodesic submanifold of $M$,
\item[(iv)] the orbit $H \cdot (\exp w)K$ through $(\exp w)K$ is a reflective submanifold of $M$,
\item[(v)] the orbit $P(G, H \times K) * \hat{w}$ through $\hat{w}$ is a weakly reflective PF submanifold of $V_\mathfrak{g}$.
\end{enumerate}
Note that the orbit $P(G, H \times K) * \hat{w}$ is not totally geodesic (\cite[Corollary 2]{M1}) and thus not reflective. 
\end{rem}

\begin{proof}[Proof of Theorem \textup{\ref{prop1}}]
Take a maximal abelian subspace $\mathfrak{t}$ in $\mathfrak{m} = \mathfrak{p}$. Since $\pi(\exp \mathfrak{t})$ is a section of the $H$-action we can assume $w \in \mathfrak{t}$ without loss of generality. 

``(i) $\Rightarrow$ (ii)": Let $\alpha \in \Delta^+$ satisfy $\langle \alpha, w \rangle \notin \pi \mathbb{Z}$. Suppose that $\langle \alpha, w \rangle \notin \frac{\pi}{2} \mathbb{Z}$. Then from Lemma \ref{lem2} (i) there exists $\alpha' \in \Delta^+$ satisfying $\langle \alpha', w \rangle \notin \frac{\pi}{2} \mathbb{Z}$ such that
\begin{equation}\label{eq7.1}
\alpha \cot \langle \alpha, w \rangle = (-1) \times \alpha' \cot \langle \alpha', w \rangle.
\end{equation}
Since $\langle \alpha, w \rangle \notin \frac{\pi}{2}\mathbb{Z}$ we have $\alpha' \neq \alpha$. Then by the property of root systems $\alpha'$ is either $2 \alpha$ or $\frac{1}{2} \alpha$. Suppose that $\alpha' = 2 \alpha$. Then the multiplicities of left and right terms of \eqref{eq7.1} are $m(\alpha)$ and $m(2 \alpha)$ respectively. However  $m(\alpha) > m(2 \alpha)$ holds by Lemma \ref{lem3}. Thus $\alpha' \neq 2 \alpha$. Similarly $\alpha' \neq \frac{1}{2}\alpha$. This is a contradiction. Thus $\langle \alpha, w \rangle \in \frac{\pi}{2} \mathbb{Z}$ holds for all $\alpha \in \Delta^+$ satisfying $\langle \alpha, w  \rangle \notin \pi \mathbb{Z}$. (Thus $N$ is totally geodesic.) Hence from Lemma \ref{lem2} (ii) the orbit $P(G, H \times K) * \hat{w}$ is an austere PF submanifold of $V_\mathfrak{g}$.

``(ii) $\Rightarrow$ (i)":  Let $\alpha \in \Delta^+$ satisfy $\langle \alpha, w \rangle \notin \pi \mathbb{Z}$. Suppose that $\langle \alpha, w \rangle \notin \frac{\pi}{2} \mathbb{Z}$. Take $m \in \mathbb{Z}$. Then it follows from Lemma \ref{lem2} (ii) that there exist $\alpha' \in \Delta^+$ satisfying $\langle \alpha', w \rangle \notin \frac{\pi}{2} \mathbb{Z}$ and $m' \in \mathbb{Z}$ such that 
\begin{equation}\label{eq7.2}
\frac{1}{\langle \alpha, w \rangle + m \pi }\alpha
=
(-1) \times 
\frac{1}{ \langle \alpha', w \rangle + m' \pi }\alpha'.
\end{equation}
Since $\langle \alpha, w \rangle \notin \frac{\pi}{2}\mathbb{Z}$ we have $\alpha' \neq \alpha$. Then $\alpha'$ is either $2 \alpha$ or $\frac{1}{2} \alpha$. Suppose that $\alpha' = 2 \alpha$. Then the multiplicity of the left term is $m(\alpha) + m(2 \alpha)$ due to the equality $\frac{1}{\langle \alpha, w \rangle + m \pi}\alpha = \frac{1}{\langle 2 \alpha, w \rangle + 2 m \pi}2 \alpha$. However that of the right term is $m(2 \alpha)$ since $\alpha' \neq \alpha$. Thus we have $\alpha' \neq 2 \alpha$.  Similarly we have $\alpha' \neq \frac{1}{2}\alpha$. This is a contradiction. Thus $\langle \alpha, w  \rangle \in \frac{\pi}{2} \mathbb{Z}$ holds for all $\alpha \in \Delta^+$ satisfying $\langle \alpha, w \rangle \notin \pi \mathbb{Z}$.  This shows that the orbit $H \cdot (\exp w) K$ is totally geodesic and therefore austere. 
\end{proof}

To generalize Theorem \ref{prop1} we recall an equivalence relation for involutions: For two involutive automorphisms $\tau$ and $\tau'$ of $G$ we write $\tau \sim \tau'$ if there exists $c \in G$ such that $\tau' = \operatorname{Ad}(c) \circ \tau \circ  \operatorname{Ad}(c)^{-1}$. If $\tau \sim \tau'$ and $H$ a symmetric subgroup of $G$ with respect to $\tau$ then $H' := \operatorname{Ad}(c)H$ is a symmetric subgroup of $G$ with respect to $\tau'$. Moreover the actions of $H$ and $H'$ on $M$ are  conjugate, that is, there exists an isomorphism $\phi : H \rightarrow H'$ and an isometry $\psi: M \rightarrow M$ such that $\psi(b \cdot p) = \phi(b) \cdot \psi(p)$ for $b \in H$ and $p \in M$. In fact $\phi := \operatorname{Ad}(c)$ and $\psi := L_c$ satisfy the property. Thus we can identify $H'$-orbits with $H$-orbits via $\psi$ and the theorem is generalized as follows:

\begin{cor}\label{cor5}
Let $M$, $H$ be as in Theorem \textup{\ref{prop1}}. Suppose that $\sigma \sim \tau$. Then for $w \in \mathfrak{g}$ the following conditions are equivalent:
\begin{enumerate}
\item the orbit $H \cdot (\exp w )K$ through $(\exp w)K $ is an austere submanifold of $M$,
\item the orbit $P(G, H \times K) * \hat{w}$ through $\hat{w}$ is an austere PF submanifold of $V_\mathfrak{g}$.
\end{enumerate}
\end{cor}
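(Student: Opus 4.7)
The plan is to reduce Corollary~\ref{cor5} to Theorem~\ref{prop1} by conjugating the $H$-action into one whose associated involution coincides with $\sigma$. By $\sigma \sim \tau$ there exists $c \in G$ with $\tau = \operatorname{Ad}(c) \circ \sigma \circ \operatorname{Ad}(c)^{-1}$. Setting $H'' := c^{-1} H c$, one checks at once that $(G^\sigma)_0 = c^{-1}(G^\tau)_0 c \subset H'' \subset c^{-1} G^\tau c = G^\sigma$, so $H''$ is a symmetric subgroup of $G$ associated to $\sigma$. Thus the pair $(H'', K)$ fits the hypothesis of Theorem~\ref{prop1}.

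Next I would transport both orbits along suitable isometries. On $M$, the isometry $\psi := L_{c^{-1}}$ intertwines the $H$-action with the $H''$-action, since $\psi(h \cdot aK) = (c^{-1} h c) \cdot \psi(aK)$. Because $G$ is compact and connected, $\exp : \mathfrak{g} \to G$ is surjective, so I may fix $w' \in \mathfrak{g}$ with $\exp w' = c^{-1} \exp w$. Then $\psi$ maps $H \cdot (\exp w) K$ onto $H'' \cdot (\exp w') K$, and since $\psi$ is an isometry, (i) is equivalent to the austerity of this $H''$-orbit.

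For the analogous transport on $V_\mathfrak{g}$, I would pick any path $\tilde{g} \in \mathcal{G}$ with $\tilde{g}(0) = c^{-1}$ and $\tilde{g}(1) = e$ (possible by connectedness of $G$) and use the gauge transformation $\tilde{g} *$, which is an isometry of $V_\mathfrak{g}$. Reading off the endpoint relations $\tilde{g}(0) H \tilde{g}(0)^{-1} = c^{-1}Hc = H''$ and $\tilde{g}(1) K \tilde{g}(1)^{-1} = K$ shows that conjugation by $\tilde{g}$ in $\mathcal{G}$ sends $P(G, H \times K)$ onto $P(G, H'' \times K)$, so $\tilde{g} *$ maps $P(G, H \times K) * \hat{w}$ onto $P(G, H'' \times K) * (\tilde{g} * \hat{w})$. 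By the equivariance \eqref{equiv2} of $\Phi$ and the choice of $w'$ we have $\Phi_K(\tilde{g} * \hat{w}) = c^{-1}\exp(w) K = \exp(w') K = \Phi_K(\hat{w'})$, and \eqref{inverse} then forces $\tilde{g} * \hat{w}$ and $\hat{w'}$ to lie in the same $P(G, H'' \times K)$-orbit. Hence (ii) is equivalent to the austerity of $P(G, H'' \times K) * \hat{w'}$.

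The two auxiliary austerity conditions are equivalent by Theorem~\ref{prop1} applied to $(H'', K)$ and the element $w' \in \mathfrak{g}$, which establishes (i)$\Leftrightarrow$(ii). No substantial obstacle is expected; the only bookkeeping step requiring care is the verification that $\tilde{g}$ conjugates $P(G, H \times K)$ \emph{exactly} onto $P(G, H'' \times K)$, which is immediate from the endpoint identities above.
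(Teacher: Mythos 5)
Your proposal is correct and follows essentially the same route as the paper: conjugate $H$ to a symmetric subgroup whose involution coincides with $\sigma$, transport both orbits by the isometries $L_{c^{-1}}$ on $M$ and $\tilde{g}*$ on $V_\mathfrak{g}$ (which intertwine via the equivariance of $\Phi_K$), and then invoke Theorem~\ref{prop1}. The only cosmetic difference is that you conjugate by $c^{-1}$ where the paper uses $c$ (a harmless relabeling), and you spell out the surjectivity-of-$\exp$ and orbit-equality bookkeeping that the paper leaves implicit.
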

\begin{proof}
Let $c \in G$ satisfy $\sigma = \operatorname{Ad}(c) \circ \tau \circ  \operatorname{Ad}(c)^{-1}$. 
Take $g \in P(G,G \times \{e\})$ satisfying $g(0) = c$. Then from \eqref{equiv} the diagram
\begin{equation*}
\begin{CD}
V_\mathfrak{g} @>g * >> V_\mathfrak{g}
\\
@V\Phi_K VV @V\Phi_K VV
\\
M @>L_c>> M
\end{CD}
\end{equation*}
commutes. Since each $P(G, H \times K)$-orbit is the inverse images of an $H$-orbit under $\Phi_K$ the assertion follows from Theorem \ref{prop1}.
\end{proof}

Next we consider the case $\sigma \circ \tau = \tau \circ \sigma$ (Theorem II (ii) in Introduction): 
\begin{thm}\label{thm:commute}
Let $M = G/K$ be a symmetric space of compact type and $H$ a symmetric subgroup of $G$. Suppose that the involutions $\sigma$ and $\tau$ commute. Then if the orbit $H \cdot (\exp w)K$ through $(\exp w) K$ where $w \in \mathfrak{g}$ is an austere submanifold of $M$, the orbit $P(G, H \times K) * \hat{w}$ through $\hat{w}$ is an austere PF submanifold of $V_\mathfrak{g}$.
\end{thm}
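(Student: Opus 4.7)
The plan is to work in a maximal abelian subspace $\mathfrak{t} \subset \mathfrak{m} \cap \mathfrak{p}$; since $\pi(\exp\mathfrak{t})$ is a section of the Hermann action, we may assume $w \in \mathfrak{t}$. By Lemma~\ref{lem2} together with Corollary~\ref{cor1}, the conditions (A) and (B) both amount to the $(-1)$-invariance of explicit multisets of vectors in $\mathfrak{t}$ with multiplicities, and these multisets decompose over rays $\mathbb{R}\alpha$, $\alpha \in \Delta^+$. On any ray containing only one element of $\Delta^+$ (i.e., with $2\alpha, \alpha/2 \notin \Delta^+$), the partner $\alpha'$ produced by the austere hypothesis is forced to equal $\alpha$, and the argument of Theorem~\ref{main1} yields the ray-wise implication (A) $\Rightarrow$ (B). So the only remaining case is a ray carrying both $\alpha$ and $2\alpha$ in $\Delta^+$.

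On such a non-reduced ray, set $\theta := \langle\alpha, w\rangle$. The cot multiset has up to four values
$\cot\theta$, $-\tan\theta$, $2\cot 2\theta$, $-2\tan 2\theta$
with multiplicities
$A_p = \dim(\mathfrak{m}_\alpha \cap \mathfrak{p})$, $A_h = \dim(\mathfrak{m}_\alpha \cap \mathfrak{h})$, $B_p = \dim(\mathfrak{m}_{2\alpha} \cap \mathfrak{p})$, $B_h = \dim(\mathfrak{m}_{2\alpha} \cap \mathfrak{h})$,
subject to the non-degeneracy conditions $\theta \notin \tfrac{\pi}{2}\mathbb{Z}$ for the $\alpha$-values and $2\theta \notin \tfrac{\pi}{2}\mathbb{Z}$ for the $2\alpha$-values. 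The first step is a per-eigenspace refinement of Lemma~\ref{lem3}: because $\sigma\tau$ is a Lie-algebra automorphism, the surjection $\operatorname{ad}(z_0):(\mathbb{C}z_0)^\perp \twoheadrightarrow \mathfrak{g}(2\alpha)$ used in the proof of Lemma~\ref{lem3} is compatible with the $\sigma\tau$-eigenspace decomposition, and choosing $z_0 \in \mathfrak{g}(\alpha, \pm 1)^+$ in turn yields $A_p > B_p$ whenever $A_p > 0$, $A_h > B_p$ whenever $A_h > 0$, and $A_p, A_h \geq B_h$. The second step is a finite case analysis built on the elementary identities $\cot\theta\cdot(-\tan\theta) = -1$ and $(2\cot 2\theta)(-2\tan 2\theta) = -4$: any pairing of the four cot values that could realize the $(-1)$-invariance of (A) is either the diagonal pairing $\cot\theta \leftrightarrow -\tan\theta$, which forces $\cot\theta = \tan\theta$ and hence $\theta \in \tfrac{\pi}{4}\mathbb{Z}\setminus \tfrac{\pi}{2}\mathbb{Z}$ (then $2\theta \in \tfrac{\pi}{2}\mathbb{Z}$, so the two $2\alpha$-values drop out automatically and (A) on the ray reduces to $A_p = A_h$), or else pins $\theta$ at an isolated algebraic value (for instance $\tan^2\theta \in \{\tfrac12,2,\tfrac15,5\}$, or $\theta \in \tfrac{\pi}{8}\mathbb{Z}\setminus \tfrac{\pi}{4}\mathbb{Z}$) and demands an equality among the multiplicities that is incompatible with the refined strict inequalities, unless every multiplicity collapses to zero. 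Since $\alpha \in \Delta^+$ forces $A_p + A_h > 0$, only the diagonal scenario survives, in which the ray is effectively reduced and Theorem~\ref{main1} yields (B) on the ray.

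The main obstacle is the refinement of Lemma~\ref{lem3} in the commuting case, which requires tracking how $\operatorname{ad}(z_0)$ interacts with the $\sigma\tau$-grading on both $\mathfrak{g}(\alpha)$ and $\mathfrak{g}(2\alpha)$ and in particular splitting the codimension-$1$ loss between the two eigenspaces according to the choice of $z_0$. Once this is in hand, the pairing enumeration is a short finite check, with each alternative collapsing against the refined inequalities combined with $\alpha \in \Delta^+$. Assembling the ray-wise verifications over all $\alpha \in \Delta^+$ completes the proof.
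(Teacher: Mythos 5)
Your proposal takes a genuinely different route to the same destination. Both arguments reduce to $w\in\mathfrak{t}$, handle rays carrying only $\alpha$ exactly as in Theorem~\ref{main1}, and reduce the case $\mathbb{R}\alpha\cap\Delta^+=\{\alpha,2\alpha\}$ to showing that the austerity hypothesis forces $\langle\alpha,w\rangle\in\tfrac{\pi}{4}\mathbb{Z}$, which annihilates the $2\alpha$-terms. The paper accomplishes this with only the coarse inequality $m(\alpha)>m(2\alpha)$ of Lemma~\ref{lem3}, via a pigeonhole contradiction: if no $\alpha$-level negation pair exists, each $\alpha$-cotangent value must pair with a $2\alpha$-value, and summing the resulting inequalities $m(\alpha,\epsilon)\le m(2\alpha,\delta(\epsilon))$ gives $m(\alpha)\le m(2\alpha)$. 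You instead strengthen Lemma~\ref{lem3} to a per-$\sigma\tau$-eigenspace version and then rule out every negation pairing except the diagonal one by a finite trigonometric case check. Your refined inequalities are correct---$\operatorname{ad}(z_0)$ does respect the $\sigma\tau$-grading, with the codimension-one loss from $(\mathbb{C}z_0)^\perp$ absorbed entirely by the eigenspace containing $z_0$; and if $A_p$ or $A_h$ vanishes then $B_h=0$, so all three stated inequalities hold---and the case analysis also goes through: each non-diagonal pairing either pins $\tan^2\theta$ at two incompatible values or demands an equality among multiplicities (such as $A_p=B_p$ or $A_h=B_p$) that contradicts the strict refined inequalities together with $A_p+A_h>0$. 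The paper's route is leaner, needing no new lemma; yours yields an eigenspace-refined multiplicity inequality of some independent interest and a more mechanical verification. As written, though, your argument is a sketch: both the refined Lemma~\ref{lem3} and the pairing enumeration (including the subcases where some of $A_p,A_h,B_p,B_h$ vanish) would need to be carried out in full before it becomes a proof.
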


\begin{rem}
The converse of Theorem \ref{thm:commute} does not hold in general. In the next section we will show a counterexample of a minimal $H$-orbit which is \emph{not} austere but the corresponding minimal $P(G, H \times K)$-orbit is austere.
\end{rem}

\begin{proof}[Proof of Theorem \textup{\ref{thm:commute}}]
Take a maximal abelian subspace $\mathfrak{t}$ in $\mathfrak{m} \cap \mathfrak{p}$. We can assume $w \in \mathfrak{t}$ without loss of generality. Let $\alpha \in \Delta^+$. If the set $\mathbb{R} \alpha \cap \Delta^+$ consists of only $\alpha$ then it follows by the same argument as in the proof of Theorem \ref{main1} that the set
\begin{equation*}
\left\{
\left.
\frac{1}{\langle \alpha, w \rangle + \frac{1}{2} \arg \epsilon + m \pi} \alpha
\ \right|\ 
\epsilon \in U(1)_\alpha^*
, \ 
m \in \mathbb{Z}
\right\}
\end{equation*}
with multiplicities is invariant under the multiplication by $(-1)$. Let us consider the other cases $\mathbb{R} \alpha \cap \Delta^+ = \{\alpha, 2 \alpha\}$ or $\{\alpha, \frac{1}{2} \alpha\}$. It suffices to consider the former case. By Lemma \ref{lem2} the union $X \cup Y$ of two sets
\begin{align*}
& 
X := 
\left\{
\left.
\cot \left( \langle \alpha, w \rangle + \frac{1}{2} \arg \epsilon \right) \alpha
\ \right|\ 
\epsilon \in U(1)_\alpha^*
\right\} 
\quad \text{and}
\\
& 
Y := 
\left\{
\left.
\cot \left( \langle 2 \alpha, w \rangle + \frac{1}{2} \arg \delta \right) 2 \alpha
\ \right|\ 
\delta \in U(1)_{2\alpha}^*
\right\}
\end{align*}
with multiplicities is invariant under the multiplication by $(-1)$, and it suffices to show that the union $Z \cup W$ of two sets
\begin{align*}
& 
Z := 
\left\{
\left.
\frac{1}{\langle \alpha, w \rangle + \frac{1}{2} \arg \epsilon + m \pi} \alpha
\ \right|\ 
\epsilon \in U(1)_\alpha^*
, \ 
m \in \mathbb{Z}
\right\}
\quad  \text{and}
\\
& 
W :=
\left\{
\left.
\frac{1}{\langle 2\alpha, w \rangle + \frac{1}{2} \arg \delta + m \pi} 2 \alpha
\ \right|\ 
\delta \in U(1)_{2 \alpha}^*
, \ 
m \in \mathbb{Z}
\right\}
\end{align*}
with multiplicities is invariant under the multiplication by $(-1)$. 

Since $\sigma$ and $\tau$ commute we have $\epsilon, \delta \in \{\pm1\}$. Thus if $\langle \alpha , w \rangle \in \frac{\pi}{2} \mathbb{Z}$ then the sets $X$, $Y$, $Z$ and $W$ are empty. Suppose that $\langle \alpha , w \rangle \notin \frac{\pi}{2} \mathbb{Z}$. Then $\langle \alpha, w \rangle + \frac{1}{2} \arg \epsilon \notin \frac{\pi}{2} \mathbb{Z}$ for all $\epsilon \in U(1)_\alpha$. Thus $U(1)_\alpha^* = U(1)_\alpha$. Hence $m(\alpha) = \sum_{\epsilon \in U(1)_\alpha^*} m(\alpha, \epsilon)$. This implies that there exist $\epsilon, \epsilon' \in U(1)_{\alpha}^*$ such that 
\begin{equation}\label{7.2.1}
\cot \left( \langle \alpha, w \rangle + \frac{1}{2} \arg \epsilon \right) \alpha
=
(-1) \times \cot \left( \langle \alpha, w \rangle + \frac{1}{2} \arg \epsilon' \right) \alpha.
\end{equation}
In fact, if this does not hold then for each $\epsilon \in U(1)_{\alpha}^*$ there exists a unique $\delta(\epsilon) \in U(1)_{2 \alpha}^*$ satisfying
\begin{equation*}
\cot \left( \langle \alpha, w \rangle + \frac{1}{2} \arg \epsilon \right) \alpha
=
(-1) \times \cot \left( \langle2 \alpha, w \rangle + \frac{1}{2} \arg \delta(\epsilon) \right)2 \alpha.
\end{equation*}
The multiplicity of the left term is $m(\alpha, \epsilon)$, or $m(\alpha, \epsilon) + m(2 \alpha, \delta')$ if there exists $\delta' \in U(1)_{2 \alpha}^*$ satisfying $\cot ( \langle \alpha, w \rangle + \frac{1}{2} \arg \epsilon) \alpha = \cot ( \langle 2\alpha, w \rangle + \frac{1}{2} \arg \delta') 2 \alpha$. That of the right term is $m(2 \alpha, \delta(\epsilon))$; due to negation of \eqref{7.2.1} we have $\cot (\langle 2 \alpha, w  \rangle + \frac{1}{2} \arg \delta(\epsilon)) \neq \cot (\langle  \alpha, w  \rangle + \frac{1}{2} \arg \epsilon')$ for any $\epsilon' \in U(1)_\alpha^*$ and thus it is not $m(2 \alpha, \delta(\epsilon)) + m(\alpha, \epsilon')$ but $m(2 \alpha, \delta(\epsilon))$. Thus we get $m(\alpha, \epsilon) \leq m(2 \alpha, \delta(\epsilon))$. Hence we obtain
\begin{equation*}
m(\alpha) = \sum_{\epsilon \in U(1)_\alpha^*} m(\alpha, \epsilon) \leq \sum_{\epsilon \in U(1)_\alpha^*} m(2\alpha, \delta(\epsilon)) \leq m(2 \alpha),
\end{equation*}
where the last inequality is due to the injective property of the map $\epsilon \mapsto \delta(\epsilon)$. 
This contradicts the fact $m(\alpha) > m(2 \alpha)$ of Lemma \ref{lem3}. Thus from \eqref{7.2.1} we have
\begin{equation*}
\langle \alpha , w \rangle + \frac{1}{2} \arg \epsilon
=
(-1) \times 
\left(
\langle \alpha, w \rangle + \frac{1}{2} \arg \epsilon'
\right),
 \mod\pi\mathbb{Z}.
\end{equation*}
Thus $\langle \alpha, w \rangle = - \frac{1}{4} \arg \epsilon - \frac{1}{4} \arg \epsilon'$ mod $\pi \mathbb{Z}$. Since $\epsilon, \epsilon' \in \{\pm1\}$ we obtain $\langle \alpha, w \rangle \in \frac{\pi}{4} \mathbb{Z}$ and $\langle 2\alpha, w \rangle \in \frac{\pi}{2} \mathbb{Z}$. Thus
\begin{equation*}
\langle \alpha, w \rangle +  \frac{1}{2} \arg \epsilon 
\in 
\frac{\pi}{4} \mathbb{Z}
, \qquad
\langle 2\alpha, w \rangle +  \frac{1}{2} \arg \delta
\in 
\frac{\pi}{2} \mathbb{Z}.
\end{equation*}
for any $\epsilon \in U(1)_\alpha$ and $\delta \in U(1)_{2 \alpha}$. Thus the sets $Y$ and $W$ are empty. Hence the set $X$ with multiplicities is invariant under the multiplication by $(-1)$. Therefore by the same argument as in the proof of Theorem \ref{main1} the set $Z$ with multiplicities is invariant under the multiplication by $(-1)$. This proves the theorem.
\end{proof}

To generalize Theorem \ref{thm:commute} we recall an equivalence relation for pairs of involutions introduced by Matsuki \cite{Mat02}. Let $(\sigma, \tau)$ and $(\sigma', \tau')$ be two pairs of involutive automorphisms of $G$. We write $(\sigma, \tau) \sim (\sigma', \tau')$ if there exist an automorphism $\rho$ of $G$ and an element $c \in G$ such that 
\begin{equation*}
\sigma'  = \rho \circ \sigma \circ \rho^{-1}
, \qquad
\tau' = 
\operatorname{Ad}(c) \circ \rho \circ \tau \circ \rho^{-1} \circ \operatorname{Ad}(c)^{-1}.
\end{equation*}
If $K$ and $H$ are symmetric subgroups of $G$ then $K' :=  \rho (K)$ and $H' := \operatorname{Ad}(c) \circ \rho(H)$ are symmetric subgroups of $G$. Moreover the $H$-action on $G/K$ and the $H'$-action on $G/K'$ are conjugate. In fact these actions are conjugate under the isomorphism $\phi := \operatorname{Ad}(c) \circ \rho : H \rightarrow H'$ and the isometry $\psi: G/K \rightarrow G/K'$ defined by $\psi(aK) := c \rho(a)K'$. Then the theorem is generalized as follows:

\begin{cor}\label{cor:commute}
Let $M$, $H$ be as in Theorem \textup{\ref{thm:commute}}. Suppose that there exists a pair of commuting involutions $(\sigma', \tau')$ of $G$ satisfying $(\sigma, \tau) \sim (\sigma' , \tau')$. Then if the orbit $H \cdot (\exp w)K$ through $(\exp w ) K$ where $w \in \mathfrak{g}$ is an austere submanifold of $M$, the orbit $P(G, H \times K) * \hat{w}$ through $\hat{w}$ is an austere PF submanifold of $V_\mathfrak{g}$.
\end{cor}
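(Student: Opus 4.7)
The plan is to reduce to Theorem \ref{thm:commute} by transporting the problem through an isometry of $V_\mathfrak{g}$ constructed from the equivalence $(\sigma,\tau) \sim (\sigma',\tau')$. Fix an automorphism $\rho$ of $G$ and an element $c \in G$ realizing the equivalence, so that $K' = \rho(K)$ and $H' = \operatorname{Ad}(c)\rho(H)$ form the associated commuting symmetric pair and $\psi : G/K \rightarrow G/K'$, $aK \mapsto c\rho(a)K'$, is an isometry conjugating the $H$- and $H'$-actions. Choose any $g \in P(G, G \times \{e\})$ with $g(0) = c$ (which exists because $G$ is connected) and set
\begin{equation*}
\tilde{\psi} : V_\mathfrak{g} \rightarrow V_\mathfrak{g}
, \qquad
\tilde{\psi}(u) := g * (\rho_* \circ u),
\end{equation*}
where $\rho_*$ acts on paths pointwise. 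Since $\rho_*$ is an orthogonal transformation of $\mathfrak{g}$ (as the differential of an automorphism it preserves the Killing form) and the gauge action is isometric, $\tilde{\psi}$ is an isometry of $V_\mathfrak{g}$.

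Next I would verify the intertwining identities. Using $\Phi(\rho_* \circ u) = \rho(\Phi(u))$ (from the equivariance of the ODE defining $\Phi$ under $\rho$) and $\Phi(g * v) = c\,\Phi(v)$ (because $g(1) = e$), one gets $\Phi_{K'} \circ \tilde{\psi} = \psi \circ \Phi_K$. A direct computation $\rho_* \circ (h * u) = (\rho \circ h) * (\rho_* \circ u)$ for $h \in \mathcal{G}$ yields
\begin{equation*}
\tilde{\psi}(h * u)
=
\left(g\,(\rho \circ h)\,g^{-1}\right) * \tilde{\psi}(u).
\end{equation*}
If $h \in P(G, H \times K)$ then $(g(\rho\circ h)g^{-1})(0) = c\rho(h(0))c^{-1} \in \operatorname{Ad}(c)\rho(H) = H'$ and $(g(\rho\circ h)g^{-1})(1) = \rho(h(1)) \in \rho(K) = K'$, so $\tilde{\psi}$ maps each $P(G, H \times K)$-orbit onto a $P(G, H' \times K')$-orbit.

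Since $G$ is connected and compact, $\exp : \mathfrak{g} \rightarrow G$ is surjective, so there exists $w' \in \mathfrak{g}$ with $\exp w' = c \exp(\rho_* w)$. Then $\psi((\exp w)K) = (\exp w')K'$, and by \eqref{inverse},
\begin{equation*}
\tilde{\psi}\bigl(P(G, H \times K) * \hat{w}\bigr)
=
\Phi_{K'}^{-1}\bigl(H' \cdot (\exp w')K'\bigr)
=
P(G, H' \times K') * \hat{w}'.
\end{equation*}
Because isometries preserve the austere property, the assumed austere property of $H \cdot (\exp w)K$ transports via $\psi$ to that of $H' \cdot (\exp w')K'$. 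Applying Theorem \ref{thm:commute} to the commuting pair $(\sigma', \tau')$ now makes $P(G, H' \times K') * \hat{w}'$ an austere PF submanifold of $V_\mathfrak{g}$, and pulling back by the isometry $\tilde{\psi}$ yields that $P(G, H \times K) * \hat{w}$ is austere.

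The main point to be handled carefully is the verification that $\tilde{h} := g(\rho \circ h)g^{-1}$ lies in $P(G, H' \times K')$, which is precisely where the Matsuki equivalence $(\sigma, \tau) \sim (\sigma', \tau')$ is used; once this is in place, the argument is a direct transfer reducing the non-commuting case to the commuting one already handled in Theorem \ref{thm:commute}.
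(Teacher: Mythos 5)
Your proof is correct and takes essentially the same route as the paper: both construct the isometry of $V_\mathfrak{g}$ as the composition of pointwise application of $d\rho$ with the gauge transformation by a path $g \in P(G, G\times\{e\})$ with $g(0)=c$, verify that this intertwines $\Phi_K$ and $\Phi_{K'}$ while conjugating $P(G,H\times K)$ to $P(G,H'\times K')$, and reduce to Theorem \ref{thm:commute}. You spell out the orbit-equivariance and the choice of $w'$ more explicitly than the paper, but the underlying argument is the same.
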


\begin{proof}
Let $\rho \in \operatorname{Aut}(G)$ and $c \in G$ be as above. Since we equipped an $\operatorname{Aut}(G)$-invariant inner product with $\mathfrak{g}$ the automorphism $\rho$ is an isometry of $G$. Thus it induces an isometry from $G/K$ to $G/K'$, which is still denoted by $\rho$. The differential $d \rho : \mathfrak{g} \rightarrow \mathfrak{g}$ induces a linear orthogonal transformation of $V_\mathfrak{g}$ by pointwise operation, which is still denoted by $d \rho$. 
Note that $d \rho(g * \hat{0})  = (\rho \circ g) * \hat{0}$ holds for all $g \in \mathcal{G}$. Take $h \in P(G, G \times \{e\})$ satisfying $h(0) = c$. Then from \eqref{equiv} the diagram 
\begin{equation*}
\begin{CD}
V_\mathfrak{g} @>d \rho >> V_\mathfrak{g} @> h* >> V_\mathfrak{g}
\\
@V \Phi_K VV @V \Phi_{K'} VV @V \Phi_{K'} VV
\\
G/K @>\rho >>G / K'@>L_c>> G/K'
\end{CD}
\end{equation*}
commutes. Since each $P(G, H \times K)$-orbit is the inverse image of an $H$-orbit under $\Phi_K$ the assertion follows from Theorem \ref{thm:commute}.
\end{proof}

Finally, as far as possible, we consider the general case that $\sigma$ and $\tau$ do not necessarily commute. In view of Corollary \ref{cor:commute} it suffices to consider non-commutative pairs of involutions which are not equivalent to commutative ones. According to the classification result \cite{Mat02} if $G$ is simple then there are three kinds of such non-commutative pairs, and if $G$ is not simple then there are many such non-commutative pairs. For a technical reason, here we focus on the case that $G$ is simple. In this case if $(\sigma, \tau)$ is one of those three pairs then the order of the composition $\sigma \circ \tau$ is $3$ or $4$ (see also \cite[Section 5]{Ohno21}). We will use this fact to prove the following theorem (Theorem II (iii) in Introduction):

\begin{thm}\label{thm:main}
Let $M = G/K$ be a symmetric space of compact type and $H$ a symmetric subgroup of $G$. Suppose that $G$ is simple.  Then if the orbit $H \cdot (\exp w)K$ through $(\exp w)K$ where $w \in \mathfrak{g}$ is an austere submanifold of $M$, the orbit $P(G, H \times K) * \hat{w}$ through $\hat{w}$ is an austere PF submanifold of $V_\mathfrak{g}$.
\end{thm}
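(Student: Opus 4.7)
The strategy is to reduce to the cases already established and then handle what remains via the classification of involutive pairs. Fix a maximal abelian subspace $\mathfrak{t}$ in $\mathfrak{m} \cap \mathfrak{p}$, and assume $w \in \mathfrak{t}$ without loss of generality. If the root system $\Delta$ of $\mathfrak{t}$ is reduced, then Theorem~\ref{main1} applies and (A) implies (B). If the pair $(\sigma,\tau)$ is equivalent in the sense of Matsuki to some pair of commuting involutions $(\sigma',\tau')$, then Corollary~\ref{cor:commute} applies. Hence we may assume $\Delta$ is non-reduced and $(\sigma,\tau)$ is not equivalent to any commuting pair.

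Under the assumption that $G$ is simple, the classification \cite{Mat02} yields only three pairs $(\sigma,\tau)$ up to equivalence falling into this residual case, and in each of them the composition $\sigma\circ\tau$ has finite order $k\in\{3,4\}$ (cf.\ \cite[Section 5]{Ohno21}). Consequently every eigenvalue $\epsilon\in U(1)_\alpha$ of $\sigma\circ\tau$ is a $k$-th root of unity, so the set of possible values of $\tfrac{1}{2}\arg\epsilon$ is finite and sits inside $\tfrac{\pi}{k}\mathbb{Z}/\pi\mathbb{Z}$.

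With this restriction in hand, the plan is to follow the pattern of the proof of Theorem~\ref{thm:commute}. By Lemma~\ref{lem2}, (A) translates to the assertion that the set
\begin{equation*}
\bigcup_{\alpha\in\Delta^+}\bigl\{\cot(\langle\alpha,w\rangle+\tfrac{1}{2}\arg\epsilon)\,\alpha \mid \epsilon\in U(1)_\alpha^*\bigr\}
\end{equation*}
is invariant under negation with multiplicities, and (B) is the analogous assertion for the set obtained by replacing $\cot(\cdot)$ by $1/(\cdot+m\pi)$ with $m\in\mathbb{Z}$. For each $\alpha\in\Delta^+$ with $2\alpha\notin\Delta^+$ and $\alpha\notin 2\Delta^+$ the argument of Theorem~\ref{main1} applies verbatim. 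For each pair $\{\alpha,2\alpha\}\subset\Delta^+$, I would partition the negation-pairings of the cotangent values into those internal to the $\alpha$-block, those internal to the $2\alpha$-block, and those crossing between the two blocks; then use the strict multiplicity inequality $m(\alpha)>m(2\alpha)$ from Lemma~\ref{lem3} (which is valid for arbitrary $\sigma,\tau$) to exclude the cross-pairings, exactly as in the proof of Theorem~\ref{thm:commute}. This forces internal pairings in the $\alpha$-block, which in the non-commuting case where $\frac{1}{2}\arg\epsilon\in\frac{\pi}{k}\mathbb{Z}$ leads to $\langle\alpha,w\rangle\in\tfrac{\pi}{2k}\mathbb{Z}$ and hence $\langle 2\alpha,w\rangle+\tfrac{1}{2}\arg\delta\in\tfrac{\pi}{2}\mathbb{Z}$ for all relevant $\delta$; consequently $U(1)_{2\alpha}^*$ contributes nothing and we are reduced to the single-root situation, after which the cotangent-to-$\frac{1}{\cdot+m\pi}$ translation of Theorem~\ref{main1} delivers (B).

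The main obstacle is the combinatorial enumeration in the third step. In the commuting case treated by Theorem~\ref{thm:commute} only two values of $\tfrac{1}{2}\arg\epsilon$ arise and the exclusion of cross-pairings is immediate from Lemma~\ref{lem3}; in the remaining three non-commuting cases there are three or four such values, and a more refined multiplicity bookkeeping is required to rule out every configuration in which some cotangent value from the $\alpha$-block is negated only by a cotangent value from the $2\alpha$-block. Once this is done, the translation to the $P(G,H\times K)$-orbit side proceeds along the same lines as in the previous proofs and completes the implication.
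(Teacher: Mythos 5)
Your general strategy agrees with the paper's: reduce to the case where $\Delta$ is non-reduced and $(\sigma,\tau)$ is not equivalent to a commuting pair, invoke Matsuki's classification to get $\operatorname{ord}(\sigma\circ\tau)\in\{3,4\}$, and then run the $\{\alpha,2\alpha\}$-block analysis along the lines of Theorem~\ref{thm:commute}. The reduction step and the use of Lemma~\ref{lem3} to rule out all cross-pairings (hence force an internal pairing in the $\alpha$-block when $U(1)_\alpha^*=U(1)_\alpha$, giving $\langle\alpha,w\rangle\in\frac{\pi}{2l}\mathbb{Z}$) are as in the paper. However, your final inference is wrong: from $\langle\alpha,w\rangle\in\frac{\pi}{2l}\mathbb{Z}$ you deduce $\langle 2\alpha,w\rangle+\tfrac12\arg\delta\in\tfrac{\pi}{2}\mathbb{Z}$ and hence that $U(1)_{2\alpha}^*$ is empty; this is false. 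One only gets $\langle 2\alpha,w\rangle+\tfrac12\arg\delta\in\frac{\pi}{l}\mathbb{Z}$, which for $l=3,4$ is not contained in $\frac{\pi}{2}\mathbb{Z}$. (For $l=3$ the quantity can be $\frac{\pi}{3}$; for $l=4$ it can be $\frac{\pi}{4}$.) So the $2\alpha$-block is genuinely present, and the ``reduced to the single-root situation'' shortcut does not hold here — it does hold in the commuting case of Theorem~\ref{thm:commute} precisely because there $\epsilon,\epsilon'\in\{\pm1\}$ forces $\langle\alpha,w\rangle\in\frac{\pi}{4}\mathbb{Z}$ and hence $\langle2\alpha,w\rangle+\frac12\arg\delta\in\frac{\pi}{2}\mathbb{Z}$, which kills $Y$ and $W$, but that arithmetic fails for $l=3,4$.

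The missing ingredient in the paper is an explicit numerical check: since $\langle\alpha,w\rangle\in\frac{\pi}{2l}\mathbb{Z}$, the values $\cot(\langle\alpha,w\rangle+\frac12\arg\epsilon)$ for $\epsilon\in U(1)_\alpha^*$ lie in $\{\pm\sqrt3,\pm\tfrac1{\sqrt3}\}$ if $l=3$ and in $\{\pm1,\pm(\sqrt2\pm1)\}$ if $l=4$, while $\cot(\langle2\alpha,w\rangle+\frac12\arg\delta)$ for $\delta\in U(1)_{2\alpha}^*$ lies in $\{\pm\tfrac1{\sqrt3}\}$ or $\{\pm1\}$ respectively. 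One then verifies directly that $\cot(\langle\alpha,w\rangle+\frac12\arg\epsilon)\,\alpha\neq -\cot(\langle2\alpha,w\rangle+\frac12\arg\delta)\,2\alpha$ for all $\epsilon,\delta$, i.e.\ no cotangent value in the $\alpha$-block can negate against one in the $2\alpha$-block even though both blocks may be non-empty. That yields that $X$ and $Y$ (and hence $Z$ and $W$) are each separately invariant under negation, and the translation from (A) to (B) goes through as in Theorem~\ref{main1}. You correctly flagged the combinatorics here as the main obstacle, but the resolution is via these explicit cotangent values (the paper's computation), not the ``refined multiplicity bookkeeping'' you propose. Also, your proposal does not cover the sub-case $U(1)_\alpha^*\subsetneq U(1)_\alpha$, which the paper handles separately (it directly gives $\langle\alpha,w\rangle\in\frac{\pi}{2l}\mathbb{Z}$ without any multiplicity argument).
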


\begin{proof}
From the above discussion it suffices to consider a pair of involutions $(\sigma, \tau)$ where the order $l$ of $\sigma \circ \tau$ is $3$ or $4$. Take a maximal abelian subspace $\mathfrak{t}$ in $\mathfrak{m} \cap \mathfrak{p}$. We can assume $w \in \mathfrak{t}$ without loss of generality. Let $\alpha \in \Delta^+$. By the same argument as in the proof of  Theorem \ref{thm:commute} it suffices to consider the case $\mathbb{R} \alpha \cap \Delta^+ = \{\alpha, 2 \alpha\}$ and to show that the austere property of $X \cup Y$ implies that of $Z \cup W$.

First we show that $\langle \alpha, w \rangle \in \frac{\pi}{2l} \mathbb{Z}$. If $U(1)_\alpha^* \subsetneq U(1)_\alpha$ then there exists $\epsilon \in U(1)_\alpha$ satisfying $\langle \alpha, w \rangle + \frac{1}{2} \arg \epsilon \in \frac{\pi}{2}\mathbb{Z}$. This shows $\langle \alpha, w  \rangle \in \frac{\pi}{2l} \mathbb{Z}$. If $U(1)_\alpha^* = U(1)_\alpha$ then $m(\alpha) = \sum_{\epsilon \in U(1)_\alpha^*} m(\alpha, \epsilon)$. Thus by the same argument as in the proof of Theorem \ref{thm:commute}  there exist $\epsilon, \epsilon' \in U(1)_{\alpha}^*$ such that 
\begin{equation*}
\cot \left( \langle \alpha, w \rangle + \frac{1}{2} \arg \epsilon \right) \alpha
=
(-1) \times \cot \left( \langle \alpha, w \rangle + \frac{1}{2} \arg \epsilon' \right) \alpha.
\end{equation*}
From this we have 
\begin{equation*}
\langle \alpha , w \rangle + \frac{1}{2} \arg \epsilon
=
(-1) \times 
\left(
\langle \alpha, w \rangle + \frac{1}{2} \arg \epsilon'
\right),
\mod \pi \mathbb{Z}.
\end{equation*}
Hence $\langle \alpha, w \rangle = - \frac{1}{4} \arg \epsilon - \frac{1}{4} \arg \epsilon'$ mod $\pi \mathbb{Z}$. Therefore $\langle \alpha, w \rangle \in \frac{\pi}{2l} \mathbb{Z}$ as claimed.

Since $\langle \alpha, w \rangle \in \frac{\pi}{2l} \mathbb{Z}$ we have 
\begin{equation*}
\langle \alpha, w \rangle +  \frac{1}{2} \arg \epsilon 
\in 
\frac{\pi}{2l} \mathbb{Z}
, \qquad
\langle 2\alpha, w \rangle +  \frac{1}{2} \arg \delta
\in 
\frac{\pi}{l} \mathbb{Z}
\end{equation*}
for any $\epsilon \in U(1)_\alpha$ and $\delta \in U(1)_{2 \alpha}$. Thus if $l = 3$ then
\begin{equation*}
\cot \left(\langle \alpha, w \rangle + \frac{1}{2} \arg \epsilon \right) 
= 
\pm \sqrt{3}, \ \pm \frac{1}{\sqrt{3}}
, \qquad
\cot \left(\langle 2\alpha, w \rangle + \frac{1}{2} \arg \delta \right) 
= 
\pm \frac{1}{\sqrt{3}}
\end{equation*}
and if $l = 4$ then
\begin{equation*}
\cot \left(\langle \alpha, w \rangle + \frac{1}{2} \arg \epsilon \right) 
= 
\pm 1, \ \pm (\sqrt{2} \pm 1)
, \qquad
\cot \left(\langle 2\alpha, w \rangle + \frac{1}{2} \arg \delta \right) 
= 
\pm 1
\end{equation*}
for $\epsilon \in U(1)_\alpha^*$ and $\delta \in U(1)_{2\alpha}^*$. Therefore 
\begin{equation*}
\cot \left( \langle \alpha, w \rangle + \frac{1}{2} \arg \epsilon \right) \alpha
\neq
(-1) \times \cot \left( \langle 2\alpha, w \rangle + \frac{1}{2} \arg \delta \right) 2\alpha.
\end{equation*} 
for any $\epsilon \in U(1)_\alpha^*$ and $\delta \in U(1)_{2\alpha}^*$. This shows that the sets $X$ and $Y$ with multiplicities are respectively invariant under the multiplication by $(-1)$. Thus by the similar arguments as in the proof of Theorem \ref{main1} the sets $Z$ and $W$ with multiplicities are respectively invariant under the multiplication by $(-1)$. This proves the theorem.
\end{proof}

\begin{rem}
By the same arguments we can generalize Theorem \ref{thm:main} to the case that $G$ is not simple but the order of $\sigma \circ \tau$ is $3$ or $4$. 
\end{rem}

\begin{rem}
In the proofs of Theorems \ref{thm:commute} and \ref{thm:main} we essentially showed that the orbit $H \cdot (\exp w)$ is an austere submanifold of $M$ if and only if the set
\begin{equation*}
\left\{
\left.
\cot \left( \langle \alpha, w \rangle + \frac{1}{2} \arg \epsilon \right) \alpha
\ \right|\ 
\epsilon \in U(1)_\alpha^*
\right\}
\end{equation*}
with multiplicities is invariant under the multiplication by $(-1)$ for each $\alpha \in \Delta^+$. 
\end{rem}

\begin{example}
Ikawa \cite{Ika11} classified austere orbits of Hermann actions under the assumptions that $G$ is simple and that $\sigma$ and $\tau$ commute. This result was extended by Ohno \cite{Ohno21} to the non-commutative case. Thus applying  those results to Theorems \ref{thm:commute} and \ref{thm:main}  we can obtain many examples of homogeneous austere PF submanifolds in Hilbert spaces. Note that so obtained austere PF submanifolds are not totally geodesic due to Corollary 2 in \cite{M1}.
\end{example}

\section{A counterexample to the converse}\label{counterexample}

In this section we show a counterexample to the converse of Theorems \ref{thm:commute} and \ref{thm:main}; we show an example of a minimal $H$-orbit which is \emph{not} austere but the corresponding minimal $P(G, H \times K)$-orbit is austere. Note that from Theorem \ref{main1} the root system $\Delta$ must be non-reduced. We give such an example by the triple
\begin{equation*}
(G, K, H) = (SU(p + q), S(U(p) \times U(q)), SO(p + q)).
\end{equation*}
We shall suppose that $p>q$.

The involutions $\sigma$ and $\tau$ of $G$ corresponding to $K$ and $H$ respectively are 
\begin{equation*}
\sigma = \operatorname{Ad}(I_{pq})
\  \text{ where }\ 
I_{pq} = 
\left[\begin{array}{cc}
- E_p & 0 \\ 0 & E_q
\end{array}\right]
\ \text{ and } \ 
\tau : \text{complex conjugation},
\end{equation*}
where $E_p$ denote the unit matrix of order $p$. Clearly $\sigma$ and $\tau$ commute. The canonical decomposition of $\mathfrak{g} = \mathfrak{su}(p + q)$ with respect to $\sigma$ is given by $\mathfrak{k} = \mathfrak{s}(\mathfrak{u}(p) + \mathfrak{u}(q))$ and 
\begin{equation*}
\mathfrak{m}
=
\left\{\left.
	\left[
	\begin{array}{ccc}
	0&0& Z
	\\
	0&0&W
	\\
	-{}^t \bar{Z}&-{}^t \bar{W}&0
	\end{array}
	\right]
\ \right|\ 
Z \in \mathfrak{gl}(q, \mathbb{C})
, \ 
W \in \mathfrak{gl}(p-q, q, \mathbb{C})
\right\}.
\end{equation*}
The canonical decomposition of $\mathfrak{g}$ with respect to $\tau$ is given by $\mathfrak{h} = \mathfrak{so}(p + q)$ and 
\begin{equation*}
\mathfrak{p}
=
\{\sqrt{-1} X  \mid X \in \operatorname{Sym}(p+q, \mathbb{R}), \  \operatorname{tr} X = 0\}.
\end{equation*}
Thus we can write
\begin{equation*}
\mathfrak{m} \cap \mathfrak{p}
=
\left\{\left.
\sqrt{-1}
	\left[
	\begin{array}{ccc}
	0&0& X
	\\
	0&0&Y
	\\
	{}^t X&{}^t Y&0
	\end{array}
	\right]
\ \right|\ 
X \in \mathfrak{gl}(q, \mathbb{R})
, \ 
Y \in \mathfrak{gl}(p-q, q, \mathbb{R})
\right\}.
\end{equation*}
We define a maximal abelian subspace $\mathfrak{t}$ in $\mathfrak{m} \cap \mathfrak{p}$ by
\begin{equation*}
\mathfrak{t}
=
\left\{
\left.
\sqrt{-1}
	\left[
	\begin{array}{ccc}
	0&0& X
	\\
	0&0&0
	\\
	X&0&0
	\end{array}
	\right]
\ \right|\ 
X =
\left[\begin{array}{ccc}
x_1 \vspace{-1mm}&& \\ &\!\ddots\!& \vspace{-1mm}\\ && x_q
\end{array}\right]
, \ 
x_1, \cdots , x_q \in \mathbb{R}
\right\}.
\end{equation*}
Note that $\mathfrak{t}$ is maximal also in $\mathfrak{m}$. For each $i = 1, \cdots , q$ we set
\begin{equation*}
e_i
=
\sqrt{-1}
	\left[
	\begin{array}{ccc}
	0&0& E_{ii}
	\\
	0&0&0
	\\
	E_{ii}&0&0
	\end{array}
	\right].
\end{equation*}
where $E_{ij}$ denote the square matrix of order $q$ having $1$ in the $i$-th row and $j$-th column and zeros elsewhere. We set
\begin{align*}
&
\mathfrak{m}_{2e_i}
=
\left\{
\left.
	\left[
	\begin{array}{ccc}
	0&0& X^{(i)}
	\\
	0&0&0
	\\
	-{}^t X^{(i)}&0&0
	\end{array}
	\right]
\ \right|\ 
X^{(i)} =
x E_{ii}
, \ 
x \in \mathbb{R}
\right\},
\\
&
\mathfrak{m}_{e_i}
=
\left\{\left.
\left[
	\begin{array}{ccc}
	0&0& 0
	\\
	0&0&W^{(i)}
	\\
	0&-{}^t \bar{W}^{(i)}&0
	\end{array}
\right]
\ \right| \ 
W^{(i)} = 
\left[\begin{array}{ccc}
0 & w_{1, i}&0
\\
0& \vdots& 0
\\
0&w_{p-q, i}&0
\end{array}\right]
, \ 
w_{1,i}, \cdots , w_{p-q, i} \in \mathbb{C}
\right\},
\\
&
\mathfrak{m}_{e_i \pm e_j}
=
\left\{\left.
\left[
	\begin{array}{ccc}
	0&0& Z^{(i,j)}
	\\
	0&0& 0
	\\
	-{}^t \bar{Z}^{(i,j)}&0&0
	\end{array}
\right]
\ \right| \ 
Z^{(i,j)} = zE_{ij} \mp \bar{z}E_{ji}
, \ 
z \in \mathbb{C}
\right\},
\end{align*}
where
\begin{equation*}
\dim \mathfrak{m}_{2e_i} = 1
, \qquad
\dim \mathfrak{m}_{e_i} = 2 (p-q)
, \qquad
\dim \mathfrak{m}_{e_k + e_l} = \dim \mathfrak{m}_{e_k - e_l} =  2.
\end{equation*}
Then we obtain the root space decomposition
\begin{equation*}
\mathfrak{m} 
= 
\mathfrak{t} 
+ 
\sum_{i = 1}^q \mathfrak{m}_{2 e_i}
+ 
\sum_{i = 1}^q \mathfrak{m}_{e_i} 
+ 
\sum_{1 \leq i < j \leq q} \mathfrak{m}_{e_i + e_j}
+
\sum_{1 \leq i < j \leq q} \mathfrak{m}_{e_i - e_j}.
\end{equation*}
By commutativity of involutions this decomposition is refined as follows:
\begin{align*}
&
\mathfrak{m} \cap \mathfrak{p} 
=  
\mathfrak{t}  
+ 
\sum_{i =1}^q  (\mathfrak{m}_{e_i} \cap \mathfrak{p})
+ 
\sum_{1 \leq i < j \leq q} (\mathfrak{m}_{e_i + e_j} \cap \mathfrak{p})
+
\sum_{1 \leq i < j \leq q} (\mathfrak{m}_{e_i - e_j} \cap \mathfrak{p}).
\\
&
\mathfrak{m} \cap \mathfrak{h} 
= 
\sum_{i = 1}^q \mathfrak{m}_{2 e_i} 
+
\sum_{i =1}^q  (\mathfrak{m}_{e_i} \cap \mathfrak{h}) 
+ 
\sum_{1 \leq i < j \leq q} (\mathfrak{m}_{e_i + e_j} \cap \mathfrak{h})
+ 
\sum_{1 \leq i < j \leq q} (\mathfrak{m}_{e_i - e_j} \cap \mathfrak{h}),
\end{align*}

We now consider the orbit $N := H \cdot (\exp w) K$, where $w \in \mathfrak{t}$ is defined by
\begin{equation*}
w := \frac{\pi}{8} \sum_{i = 1}^q e_i.
\end{equation*}
Set $a := \exp w$. Then from \eqref{tangent3:commute} and \eqref{normal3:commute} the tangent space and the normal space of $N$ are
\begin{align*}
T_{aK} N 
&=
dL_a ( \ 
	\sum_{i =1}^q  (\mathfrak{m}_{e_i} \cap \mathfrak{p}) + \sum_{1 \leq i < j \leq q} (\mathfrak{m}_{e_i + e_j} \cap \mathfrak{p}) \ )
\\
& + dL_a( \ \sum_{i = 1}^q \mathfrak{m}_{2 e_i}   + \sum_{i =1}^q  (\mathfrak{m}_{e_i} \cap \mathfrak{h}) + \sum_{1 \leq i < j \leq q} (\mathfrak{m}_{e_i + e_j} \cap \mathfrak{h}) +  \sum_{1 \leq i < j \leq q} (\mathfrak{m}_{e_i - e_j} \cap \mathfrak{h})\ ),
\\
T^\perp_{aK} N &= dL_a(\ \mathfrak{t} + \sum_{1 \leq i < j \leq q} (\mathfrak{m}_{e_i - e_j} \cap \mathfrak{p})\ ),
\end{align*}
and for each $\xi \in \mathfrak{t}$ the principal curvatures of $N$ in the direction of $dL_a(\xi)$ are expressed as the inner product of $\xi$ with vectors 
\begin{equation*}
-(\sqrt{2} + 1)e_i
, \quad
-(e_i + e_j)
, \quad 
2 e_i 
, \quad 
(\sqrt{2} - 1)e_i 
, \quad 
e_i + e_j
, \quad 
0,
\end{equation*}
whose multiplicities are respectively
\begin{equation*}
p - q 
, \quad
1
, \quad
1
, \quad 
p-q
, \quad
1
, \quad
1.
\end{equation*}
Since the set $\{- (\sqrt{2} + 1)e_i, 2 e_i, (\sqrt{2} - 1)e_i\}$ can not be invariant under the multiplication by $(-1)$ the orbit $N$ is not an austere submanifold of $M$. Note that if $p-q = 1$ then it is a minimal submanifold of $M$ but still not austere.

On the other hand, from Corollary \ref{cor1} (see also Remark \ref{rem5}) the principal curvatures of the orbit $P(G, H \times K) * \hat{w}$ in the direction of $\hat{\xi}$ are expressed as the inner product of $\xi$ with vectors
\begin{align*}
&
\{0\}
, \ \ 
\left\{
-\frac{1}{\frac{\pi}{8} + m \pi} e_i
\right\}_{m \in \mathbb{Z}}
, \ \ 
\left\{
- \frac{1}{\frac{\pi}{4} + m \pi} (e_i + e_j)
\right\}_{m \in \mathbb{Z}},
\\
&
\left\{
- \frac{1}{\frac{3}{4}\pi+ m \pi} 2 e_i
\right\}_{m \in \mathbb{Z}}
, \ \ 
\left\{
- \frac{1}{\frac{5}{8}\pi+ m \pi}  e_i
\right\}_{m \in \mathbb{Z}}
, \ \ 
\left\{
- \frac{1}{\frac{3}{4}\pi + m \pi}  (e_i + e_j)
\right\}_{m \in \mathbb{Z}},
\\
&
\left\{
- \frac{1}{\frac{\pi}{2}+ m \pi}  (e_i - e_j)
\right\}_{m \in \mathbb{Z}}
, \ \ 
\left\{
\frac{1}{n \pi}  (e_i - e_j)
\right\}_{n \in \mathbb{Z} \backslash \{0\}},
\end{align*}
whose multiplicities are respectively
\begin{equation*}
\infty
, \quad 
p-q
, \quad
1
, \quad
1
, \quad
p-q
, \quad
1
, \quad
1
, \quad
1.
\end{equation*}
Note that
\begin{equation*}
\left\{
- \frac{1}{\frac{3}{4}\pi+ m \pi} 2 e_i
\right\}_{m \in \mathbb{Z}}
=
\left\{
- \frac{1}{\frac{3}{8}\pi+ m \pi}  e_i
\right\}_{m \in \mathbb{Z}}
\cup
\left\{
- \frac{1}{ \frac{7}{8}\pi + m \pi}  e_i
\right\}_{m \in \mathbb{Z}}.
\end{equation*}
Note also that the sets
$\{- \frac{1}{\pi/2+ m \pi}  (e_i - e_j)\}_{m \in \mathbb{Z}}$
and
$\{\frac{1}{n \pi}  (e_i - e_j)\}_{n \in \mathbb{Z} \backslash \{0\}}$
with multiplicities are respectively invariant under the multiplication by $(-1)$. Thus from the equalities
\begin{equation*}
\frac{1}{\frac{\pi}{8} + m \pi} e_i = (-1) \times \frac{1}{\frac{7}{8}\pi + (- m-1)\pi} e_i
, \quad
\frac{1}{\frac{5}{8}\pi + m \pi} e_i = (-1) \times \frac{1}{\frac{3}{8}\pi + (-m-1)\pi} e_i,
\end{equation*}
\begin{equation*}
\frac{1}{\frac{\pi}{4} + m \pi} (e_i + e_j) = (-1) \times \frac{1}{\frac{3}{4}\pi + (-m-1)\pi} (e_i + e_j)
\end{equation*}
the orbit $P(G, H \times K)* \hat{w}$ is austere if and only if $p - q = 1$. Therefore we have shown that if $p - q = 1$ then the orbit $H \cdot (\exp w) K$ is not austere but the orbit $P(G, H \times K) * \hat{w}$ is austere. This is the desired counterexample. In this case the orbit $H \cdot (\exp w) K$ is a minimal submanifold of $M$ as mentioned above, and thus the orbit $P(G, H \times K) * \hat{w}$ is minimal PF submanifold of $V_\mathfrak{g}$ (\cite{KT93}, \cite{HLO06}). 

Finally we mention further remarks on the converse. As we have seen above, if the root system $\Delta$ is non-reduced then there exists a counterexample to the converse of Theorems \ref{thm:commute} and \ref{thm:main}. However, even if $\Delta$ is non-reduced, the converse holds in some cases. In fact, Theorem \ref{prop1} and Corollary \ref{cor5} are valid in the non-reduced case. Moreover consider the case $\sigma \circ \tau = \tau \circ \sigma$ and set
\begin{equation*}
\Delta_1^+ := \{\alpha \in \Delta^+ \mid \mathfrak{m}_\alpha \cap \mathfrak{p} \neq \{0\}\}
\quad \text{and} \quad
\Delta_{-1}^+ := \{\alpha \in \Delta^+ \mid \mathfrak{m}_\alpha \cap \mathfrak{h} \neq \{0\}\}.
\end{equation*}
Suppose that $\Delta$ is of type $BC$ and write $\Delta^+ = \{e_i,  2 e_i\}_i \cup \{e_i \pm e_j\}_{i<j}$.
Suppose also that $\dim \mathfrak{t} \geq 2$ and $\Delta_1^+ \cap \Delta_{-1}^+ = \{ e_i\}_i$. Then it follows by straightforward calculations that the converse holds (cf.\ \cite{M5}). Note that the counterexample shown in this section satisfies $\Delta_1^+ \cap \Delta_{-1}^+ = \{e_i\}_i \cup \{e_i \pm e_j\}_{i < j}$ if $\dim \mathfrak{t} \geq 2$, and $\Delta_1^+ \cap \Delta_{-1}^+ = \{e_1\}$ if $\dim \mathfrak{t} = 1$. For the investigation of the triple $(\Delta, \Delta_1, \Delta_{-1})$ and the corresponding commutative Hermann actions, see Ikawa's papers \cite{Ika11} and \cite{Ika14}.

\subsection*{Acknowledgements}
The author would like to thank Professor Yoshihiro Ohnita for useful discussions and valuable suggestions. The author is also grateful to Professors Shinji Ohno, Hiroyuki Tasaki and Hiroshi Tamaru for their interests in this work and useful comments. Thanks are also due to Professors Naoyuki Koike and Takashi Sakai for their encouragements.

\end{document}